\newcommand{\Tr}{\mathrm{Tr}\,}
\theoremstyle{plain}
\newtheorem{theorem}{Theorem}[section]
\newtheorem{lemma}[theorem]{Lemma}
\newtheorem{proposition}[theorem]{Proposition}
\newtheorem{definition}[theorem]{Definition}
\theoremstyle{definition}
\newtheorem{remark}[theorem]{Remark}
\newcommand{\sym}{\mathrm{sym}}
\newcommand{\loc}{\mathrm{loc}}
\newcommand{\G}{{\mathcal G}}
\newcommand{\dist}{\operatorname{dist}}
\newcommand{\supp}{\operatorname{supp}}
\newcommand{\e}{\varepsilon}
\newcommand\ecke{\mathop{\hbox{\vrule height 7pt width .3pt depth 0pt
\vrule height .3pt width 5pt depth 0pt}}\nolimits}
\newcommand{\R}{\mathbb{R}}
\newcommand{\N}{\mathbb{N}}
\renewcommand{\d}{\text{d}}
\renewcommand{\L}{\mathbb{L}}
\newcommand{\M}{\mathcal{M}}
\newcommand{\id}{\mathrm{Id}}
\newcommand{\F}{{\mathcal F}}
\newcommand{\curl}{{\mathrm{curl}\,}}
\renewcommand{\H}{\mathcal{H}}
\renewcommand{\L}{{\mathcal L}}
\newcommand\wto{\rightharpoonup}
\renewcommand{\div}{\mathrm{div}\,}
\newcommand{\Rs}{R^{\mathrm{sym}}}
\newcommand{\Rsym}{\R^{2\times 2}_{\mathrm{sym}}}
\newcommand{\cof}{\mathrm{cof}\,}
\title{Michell trusses  in two dimensions as a $\Gamma$-limit of  optimal design
  problems in  linear elasticity}
\date{\today}
\author[H. Olbermann] {Heiner Olbermann}
\address[Heiner Olbermann]{Universit\"at Leipzig, Germany}
\email{heiner.olbermann@math.uni-leipzig.de}
\begin{document}

\maketitle

\begin{abstract}
We reconsider the minimization of the compliance of a  two dimensional elastic body
with traction boundary conditions for a given weight.
It is well known how to  rewrite this optimal design problem as a
nonlinear variational problem. We take the limit of vanishing weight  by sending a suitable Lagrange
multiplier to infinity in the variational formulation. We show that the limit, in the sense of
$\Gamma$-convergence, is a certain Michell truss problem. This proves a conjecture by Kohn and Allaire.
\end{abstract}

\section{Introduction}

The aim of the present article is to derive a certain form of the Michell truss
problem from an optimal design problem in 
linear elasticity in two dimensions. The optimal design problem we consider is
the following classical question: Consider  an elastic body of  given  weight
loaded in plane stress. Which shape of the body minimizes the
 compliance (work done by the load)? There exist several different approaches to
 this problem; here we are going to be concerned with the ``homogenization 
 method'' that has been developed by 
 Lurie et al.~\cite{Lurie2}, 
 Gibiansky and Cherkaev \cite{MR1493041}, Murat and  Tartar \cite{MR807532},
 Kohn and Strang \cite{MR820342}, and others. 
The homogenization method rephrases the compliance optimization problem as a two-phase
design problem, and then enlarges the set of permissible designs via relaxation.
We refer the interested reader to
 Allaire's book \cite{MR1859696} for a more thorough account  of
 the method. 

\medskip

On a formal level, it has been noted by Allaire and Kohn \cite{allaire1993optimal}
 that the relaxed formulation of the problem  leads to a different
 variational problem in the limit of vanishing weight, namely a certain variant
 of the Michell
 truss problem (see also
 \cite{rozvany1987least,bendsoe1993michell}). This problem was first stated by Michell in
 1904 \cite{lviii185limits}. Michell trusses are elastic structures that consist of linear
 truss elements, each of which can withstand a certain tensile or compressive
 stress. The variational problem consists in finding the Michell truss of least
 weight that is admissible in the sense that it resists a given load. Michell
 himself already knew that this problem has no solution in general, and
 relaxation is required to assure existence of solutions. Since this day, the theory of Michell
 trusses has been very popular in the engineering and mathematics
 community. We refrain from attempting to give a comprehensive list of the
 relevant literature, and refer the reader to  \cite{hemp1973optimum,rozvany2012structural}.

\medskip

In the present article, we are going to cast the formal observations by Kohn and
Allaire into a rigorous statement. More precisely, we are going to prove that
the Michell truss problem is the limit of the compliance minimization problem in
 linear elasticity for vanishing weight in the sense of $\Gamma$-convergence
\cite{dal2012introduction}. 

\medskip

The plan of the paper is as follows:
Section  \ref{sec:stat-vari-probl} below is supposed to give the reader a quick
overview over the setting and main result. It consists of four Subsections: In
Sections \ref{sec:compl-minim-probl} and \ref{sec:mich-truss-probl}, we
are going to state the compliance minimization problem for  positive weight and the
Michell problem respectively. In Section \ref{sec:gamma-convergence}, our aim is   to give the reader a good idea of our main
result as quickly as possible, without too many 
 preparatory definitions. This is why we first state a special case
of our main theorem, Theorem \ref{thm:special}. In Section \ref{sec:derivation-variational}, we give a short
explanation of the form of the variational formulation of the compliance
minimization problem that we had presented in Section \ref{sec:compl-minim-probl}.  
In Section \ref{sec:preliminaries}, we are going to collect some results from
the literature. In Section \ref{sec:airy-potent-bound}, we explain the manner in which we use Airy
potentials for the solution of  elasticity problems, and we state our main
$\Gamma$-convergence result, Theorem \ref{thm:main}.  Section \ref{sec:proof-comp-upper} contains the
proof of the compactness and upper bound part of Theorem \ref{thm:main}, and
Section \ref{sec:proof-lower-bound} contains the proof of the lower bound. The
appendix consists of two parts: In
Section \ref{sec:proof-theor-refthm:r}, we prove some facts on the relaxation of integral functionals whose integrands
depend on second gradients, which we were unable to find in the literature. In
Section \ref{sec:proof-theor-refthm:q}, we derive the 2-quasiconvexification of
the integrand in the compliance minimization  problem.

\subsection*{Notation}
The
symbol ``$C$'' is used as follows: A statement such as ``$f\leq Cg$'' is shorthand for
``there exists a constant $C>0$  such that $f\leq
Cg$''. The value of $C$ may change within the same line. 
For $f\leq Cg$, we also write $f\lesssim g$.


\section{Setting and (a special case of the) main result}
\label{sec:stat-vari-probl}
In the present section, our aim is to present first, the optimal design problems
in linear elasticity, second,  the Michell truss problem  in its variational
form, and third,  a special case of our main theorem that links these problems
via $\Gamma$-convergence. 
On the one hand, this special case does not require a lot of preparatory definitions, and on the
other hand, it is not much weaker than the full result.

For a bounded open set $U\subset\R^n$, $k\in\N$ and $1\leq p\leq\infty$, the Sobolev space $W^{k,p}(U)$ is defined by its norm
\[
\|u\|_{W^{k,p}(U)}=
\sum_{|\alpha|\leq k}\|\partial^\alpha u\|_{L^p(U)}\,,
\]
where the sum runs over multiindices $\alpha=(\alpha_1,\dots,\alpha_n)\in
\N_0^n$ with $|\alpha|=\sum_i\alpha_i\leq k$ and
$\partial^\alpha=\prod_i\partial_i^{\alpha_i}$. For $p=2$, we use the notation
\[
H^k(U)=W^{k,2}(U)\,.
\]
For the spaces with homogeneous boundary conditions, we use the notation
\[
W_0^{k,p}(U)=\{u\in W^{k,p}(U):\nabla^\alpha u=0\text { on }\partial\Omega
\text{ for }|\alpha|\leq k-1\}\,.
\]
The fractional Sobolev spaces $W^{s,p}(U)$ with $k\in\N$, $k<s<k+1$ and $p\in [1,\infty)$  are
defined by the Gagliardo norm,
\[
\|u\|_{W^{s,p}(U)}= \|u\|_{W^{k,p}(U)}+\int_U\d x\int_U\d
y\frac{|\nabla^k u(x)-\nabla^ku(y)|^p}{|x-y|^{n+sp}}\,.
\]
For compact, $n-1$-rectifiable subsets $S\subset \R^n$, we define the norms
$\|u\|_{W^{k,p}(S)}$, $\|u\|_{W^{s,p}(S)}$ by a suitable cover of $S$ by the ranges of Bilipschitz
maps, and we write $W^{s,2}(S)=H^s(S)$. The dual of $H^{s}(S)$ is denoted
by $H^{-s}(S)$, and the dual of $W^{k,\infty}(S)$ is denoted by $W^{-k,1}(S)$.

\medskip

We write $\R^{n\times n}_\sym:=\{M\in\R^{n\times n}:M^T=M\}$.
On
$\R^{n\times n}$, we introduce a scalar product by
\[
\xi:\xi'=\sum_{i,j}\xi_{ij}\xi_{ij}'\,.
\]
We will also use the notation $|\xi|^2=\xi:\xi$ for $\xi\in
\R^{n\times n}$.

\medskip

In the present paper, we are going to derive a result for bounded open sets
$\Omega\subset\R^2$. More precisely, the symbol $\Omega$ will be reserved for
sets satisfying the following definition:

\begin{definition}
  \label{def:H1} From now on, we assume that $\Omega\subset\R^2$ has  the
  following properties:
  \begin{itemize}
  \item[(i)] $\Omega$ is
  open, bounded, connected  and simply connected
\item[(ii)] There exists a finite number of points $x_i\in\partial\Omega$,
  $i=1,\dots,N$, with pairwise disjoint neighborhoods
  $U_i$ of $x_i$ and $C^2$-diffeomorphisms $\varphi_i:U_i\to\varphi(U_i)$ such
  that $\varphi(x_i)=0$,  $\varphi(\Omega\cap U_i)\subset (0,1)^2$ and
  $\varphi(\partial\Omega\cap U_i)\subset[0,1]\times\{0\}\cup\{0\}\times [0,1]$.
\item[(iii)]
  $\partial\Omega$ is $C^2$ away from $\{x_i:i=1,\dots,N\}$.
\end{itemize}
\end{definition}
The purpose of (ii) and (iii) above is that the operator $W^{2,1}(\Omega)\to
L^1(\partial\Omega)$, $u\mapsto \nabla u\cdot n$ is surjective, where $n$
denotes the outer unit normal to $\partial\Omega$, see Theorem \ref{thm:traceop} below.
We will denote the outer unit normal of $\partial\Omega$ by $n=(n_1,n_2)$, and
define a tangent vector $\tau=n^\bot=(-n_2,n_1)$. We denote the tangential
derivative by $\partial_\tau$, and the normal derivative by $\partial_n$.

\subsection{The compliance minimization problem}
\label{sec:compl-minim-probl}

For
$\lambda>0$, let $F_\lambda:\R^{2\times 2}_\sym\to \R$ be defined by
\[
F_\lambda(\xi)=\begin{cases}
0 &\text{ if }\xi=0\\
\lambda+|\xi|^2&\text{ else.}\end{cases}
\]
In the following, the parameter $\lambda$ is a Lagrange multiplier for the
weight of the two-dimensional elastic structure. Taking the limit of vanishing
weight corresponds to the limit $\lambda\to\infty$. 
We define the functionals for finite $\lambda$ with  boundary conditions fixed
by the choice of some
$g\in H^{-1/2}(\partial\Omega;\R^2)$. The space of allowed stresses is given by
\[
S_g(\Omega)=\{\sigma\in L^2(\Omega;\Rsym):\div\sigma=0\text{ in }\Omega,\,\sigma\cdot n=g\text{ on }\partial\Omega\}\,,
\]
where $n$ denotes the unit outer normal of $\Omega$. The integral functional for finite $\lambda$ is given by
\[
\G_{g,\lambda}(\sigma)=\begin{cases}\lambda^{-1/2}\int_{\Omega}F_\lambda(\sigma)\d
  x&
  \text{ if } \sigma\in S_g\\
+\infty & \text{ else.}\end{cases}
\]
For  explanation of the fact that the traditional form of compliance minimization is
equivalent to the minimization of $\G_{g,\lambda}$, see Section \ref{sec:derivation-variational}.

\subsection{The Michell truss problem}
\label{sec:mich-truss-probl}
For the definition of the limit functional, we need to collect some more notation. 

For any Borel set $U\subset\R^n$, 
let
$\M(U)$ denote the set of signed Radon measures on $U$.   We denote
by $\M(U;\R^p)$ the $\R^p$ valued  Radon measures on $U$. Furthermore, let
$\M(U;\R^{n\times n}_\sym)$ denote the space $\{\mu\in\M(U;\R^{n\times
  n}):\mu_{ij}=\mu_{ji}\text{ for }i\neq j\}$. 
For $\mu\in \M(U;\R^{p})$, let $|\mu|$ denote the total variation
measure (see Section \ref{sec:measures}). 
For $\mu\in \M(U;\R^p)$, we have by the Radon-Nikodym differentiation Theorem (see
Theorem \ref{thm:RN} below) that for $|\mu|$-almost every $x\in U$, the derivative
$\d\mu/\d|\mu|$ exists.
For any one-homogeneous function $h:\R^{p}\to\R$ and any $\mu\in
\M(U;\R^{p})$, we may hence define 
\[
h(\mu)=h\left(\frac{\d\mu}{\d|\mu|}\right)\d|\mu|\,.
\]
This is a well defined Borel measure.

\medskip 

Now let $U\subset \R^n$ be open. Let $\mathcal{E}'(U;\R^p)$ denote the dual of $C^1(U;\R^p)$, i.e., the space of
 $\R^p$ valued distributions whose support is compactly contained in $U$.
Let $\mu\in \M(\overline U;\R^n)$,
$f\in \mathcal{E}'(\overline U)$. We say that $-\div\mu =f$ if and only if
\begin{equation}
\int_{\bar
  U}\nabla\varphi\cdot\frac{\d\mu}{\d|\mu|}\d|\mu|=\left<f,\varphi\right>\label{eq:44}
\end{equation}
for every $\varphi\in C^1(\R^n)$. Here $\mu$ and $f$ are viewed, respectively,
as a measure and a
distribution  
on $\R^n$ supported on $\bar U$. If $f$ has support in the boundary $\partial
U$, then this induces a boundary condition for $\mu$. Just as in the equation
above, the notation
$\left<\cdot,\cdot\right>$ will denote the pairing of topological vector spaces with
their dual in the sequel. It will always be clear from the context which pairing is meant. For
$\mu\in\M(U;\R^{n\times n}_{\mathrm{sym}})$ and $f\in \mathcal{E}'(U;\R^n)$, we say that $-\div\mu=f$ if the
equation holds for every row, $-\div\mu_i=f_i$ for $i=1,\dots,n$.

\medskip

For $\xi\in\Rsym$, let $\lambda_1(\xi), \lambda_2(\xi)$ denote the eigenvalues
of $\xi$. 
We set
\[
\rho^0(\xi):=\sum_{i=1}^2 |\lambda_i(\xi)|\,.
\]
We will repeatedly use the following estimates:
\begin{equation}
|\xi|\leq \rho^0(\xi)\leq 2|\xi|\,.\label{eq:30}
\end{equation}
Note that $\rho^0:\Rsym\to\R$ is sublinear and positively one-homogeneous.

For $U\subset\R^2$, $\mu=(\mu_1,\mu_2)\in \M(U;\R^2)$, we write $\mu^\bot:=(-\mu_2,\mu_1)$ and
$\curl\mu=\div \mu^\bot$. Again, for
$\mu\in\M(U;\Rsym)$ and $f\in \mathcal{E}'(U;\R^2)$, we say that $\curl\mu=f$ if the
equation holds for every row.

Now let $\Omega$ be as in Definition \ref{def:H1}. For $g\in W^{-1,1}(\partial\Omega;\R^2)$, let the space of permissible stresses be given by
\[
\Sigma_g(\Omega)=\{\sigma\in \M(\overline \Omega;\Rsym):-\div \sigma=g\H^1\ecke\partial\Omega\}\,.
\]


With these preparations, we are ready to define the Michell problem for  for traction boundary values $g\in W^{-1,1}(\partial\Omega;\R^2)$,
\[
\G_{g}(\sigma)=\begin{cases} 2\rho^0(\sigma)(\overline{\Omega})& \text{ if } \sigma\in\Sigma_g(\overline{\Omega})\\
+\infty & \text{ else.}\end{cases}
\]
For a motivation of this functional in the context of structural optimization, we refer to  \cite{MR1859696,allaire1993optimal}.

\newcommand{\ul}{g}
\subsection{Gamma convergence}
\label{sec:gamma-convergence}
We want to approximate the functional $\G_{g}$ by the functionals
$\G_{\ul,\lambda}$ in the sense of $\Gamma$-convergence. We assume that $
\Omega$ satisfies Definition \ref{def:H1}. We  introduce the trace operators
\[
\begin{split}
  \gamma_0:&u\mapsto u|_{\partial\Omega}\\
  \gamma_1:&u\mapsto \nabla u|_{\partial\Omega}\cdot n\,.
\end{split}
\]
For the properties of the trace operators, see Section \ref{sec:trace-operators}
below. 


As a special case of our main theorem, we have that for $g\in H^{-1/2}(\partial\Omega;\R^2)$, 
\[
\G_{g,\lambda}\stackrel{\Gamma}{\to} \G_{g}\,.
\] 
More precisely:
\begin{theorem}
\label{thm:special}
 Let $\Omega\subset\R^2$ satisfy Definition \ref{def:H1} and let  $g\in H^{-1/2}(\partial\Omega;\R^2)$.
\label{thm:gammabdry}
  \begin{itemize}
  \item[(i)] \emph{Compactness:} Let $\{\sigma_\lambda\}_\lambda\subset L^2(\Omega;\Rsym)$ be
  such that $\G_{g,\lambda}(\sigma_\lambda)<C$. Then there exists a subsequence (no
  relabeling) and $\sigma\in \M(\Omega;\Rsym)$ such that $\sigma_\lambda\to \sigma$ weakly * in
  the sense of measures. 
\item[(ii)] \emph{Lower bound:} If $\sigma_\lambda\to \sigma$ weakly * in the sense of
  measures, then
  \begin{equation}
  \liminf_{\lambda\to\infty}\G_{g,\lambda}(\sigma_\lambda)\geq \G_{g}(\sigma)\,.\label{eq:40}
  \end{equation}

\item[(iii)] \emph{Upper bound:} For every $\sigma\in\M(\Omega;\Rsym)$ there exists a sequence
  $\{\sigma_\lambda\}_\lambda\subset L^2(\Omega;\Rsym)$ such that $\sigma_\lambda\to \sigma$
  weakly * in the sense of measures and $\lim_{\lambda\to\infty}\G_{g,\lambda}(\sigma_\lambda)=\G_{g}(\sigma)$.
  \end{itemize}
\end{theorem}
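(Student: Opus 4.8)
The plan is to treat the three items as a standard $\Gamma$-convergence argument, with the genuinely hard part being the construction in the upper bound (iii) and the characterization of the divergence constraint in the limit.

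\medskip

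\textbf{Compactness (i).}
First I would obtain a uniform bound on $\|\sigma_\lambda\|_{L^1(\Omega)}$. The bound $\G_{g,\lambda}(\sigma_\lambda)<C$ means $\lambda^{-1/2}\int_{\{\sigma_\lambda\neq 0\}}(\lambda+|\sigma_\lambda|^2)\ud x<C$; dropping the $|\sigma_\lambda|^2$ term gives $\lambda^{1/2}\meas(\{\sigma_\lambda\neq 0\})<C$, and by Cauchy--Schwarz $\int_\Omega|\sigma_\lambda|\ud x=\int_{\{\sigma_\lambda\neq 0\}}|\sigma_\lambda|\ud x\leq \meas(\{\sigma_\lambda\neq 0\})^{1/2}\big(\int_\Omega|\sigma_\lambda|^2\ud x\big)^{1/2}\leq (C\lambda^{-1/2})^{1/2}(C\lambda^{1/2})^{1/2}=C$. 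Hence $\{\sigma_\lambda\}$ is bounded in $\M(\Omega;\Rsym)$, and by weak-$*$ compactness of bounded sets of measures there is a subsequence converging weakly-$*$ to some $\sigma\in\M(\Omega;\Rsym)$. (I would not need the constraint here, only the energy bound.)

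\medskip

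\textbf{Lower bound (ii).}
Here the key point is the pointwise inequality, for the integrand, that controls $2\rho^0$ from below in terms of $\lambda^{-1/2}F_\lambda$ in a way that survives passing to the limit. Optimizing the elementary scalar inequality: for $\xi\neq 0$, $\lambda^{-1/2}(\lambda+|\xi|^2)=\lambda^{1/2}+\lambda^{-1/2}|\xi|^2\geq 2|\xi|$ by AM--GM, so $\lambda^{-1/2}F_\lambda(\xi)\geq 2|\xi|\geq \rho^0(\xi)$ by \eqref{eq:30}. That only gives the constant $1$, not $2$; to recover the sharp constant $2$ one must exploit that the support of $\sigma_\lambda$ is small, i.e. the same Cauchy--Schwarz/AM--GM interplay between $\lambda^{1/2}\meas(\{\sigma_\lambda\neq0\})$ and $\lambda^{-1/2}\int|\sigma_\lambda|^2$ that gives, by a more careful two-term split, $\lambda^{-1/2}\int_\Omega F_\lambda(\sigma_\lambda)\ud x\geq 2\int_\Omega|\sigma_\lambda|\ud x$ — but this still has $\rho^0$ replaced by $|\cdot|$. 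The correct route is to recognize $2\rho^0$ as the $2$-quasiconvex (here: the relevant convexification under the curl-free / Airy-potential constraint) lower semicontinuous envelope of the family $\lambda^{-1/2}F_\lambda$; this is exactly what the appendix results on relaxation of second-gradient integrands and the $2$-quasiconvexification computation (Sections \ref{sec:proof-theor-refthm:r} and \ref{sec:proof-theor-refthm:q}) are set up to provide. So I would write $\sigma_\lambda=\curl\curl u_\lambda$ for Airy potentials $u_\lambda$ (using simple connectedness of $\Omega$ and $\div\sigma_\lambda=0$), pass to the limit using the lower semicontinuity of the quasiconvexified functional, and separately check that the divergence constraint is stable under weak-$*$ convergence, which is immediate since $-\div(\sigma_\lambda\dL)=g$ on $\partial\Omega$ tested against fixed $C^1$ functions passes to the limit, giving $\sigma\in\Sigma_g(\overline\Omega)$ (here one must be slightly careful that mass does not escape to $\partial\Omega$ in a way that changes $g$, but since $g$ is fixed on the boundary this is built in).

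\medskip

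\textbf{Upper bound (iii), the main obstacle.}
By density and diagonalization it suffices to construct recovery sequences for $\sigma$ in a dense class, e.g. measures of the form $\sigma=\sum_k a_k\, e_k\otimes e_k\,\H^1\ecke L_k$ supported on finitely many segments $L_k$ (polygonal "trusses"), or $\sigma$ with a smooth density — the relaxation results tell us $\G_g$ is the lower semicontinuous envelope of such. For such a $\sigma$, I would mollify/spread each truss element onto a thin strip of width $w_\lambda\to 0$, placing $\sigma_\lambda=\sigma$ (the rank-one density) on the strip so that $|\sigma_\lambda|\sim |a_k|/w_\lambda$ and $\meas(\{\sigma_\lambda\neq 0\})\sim w_\lambda\,\H^1(L_k)$; then $\lambda^{-1/2}\int F_\lambda(\sigma_\lambda)=\lambda^{1/2}\meas+\lambda^{-1/2}\int|\sigma_\lambda|^2\approx \lambda^{1/2}w_\lambda\H^1(L_k)+\lambda^{-1/2}w_\lambda^{-1}|a_k|^2\H^1(L_k)$, and optimizing over $w_\lambda$ (choosing $w_\lambda\sim |a_k|\lambda^{-1/2}$) yields exactly $2|a_k|\H^1(L_k)=\rho^0(\sigma)(L_k)$ on each element, i.e. $\to \G_g(\sigma)/2\cdot 2=\G_g(\sigma)$ after summing. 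The real difficulty is \emph{repairing the divergence constraint}: the spread-out $\sigma_\lambda$ will satisfy $\div\sigma_\lambda=0$ only away from the junctions and endpoints of the truss, and $\sigma_\lambda\cdot n=g$ on $\partial\Omega$ only approximately; one must add a corrector $\tau_\lambda$, supported near junctions/endpoints and near $\partial\Omega$, of vanishing mass ($\|\tau_\lambda\|_{L^2}^2=o(\lambda^{1/2})$ and $\meas(\supp\tau_\lambda)=o(\lambda^{-1/2})$, so that it does not contribute to the energy in the limit) that fixes $\div(\sigma_\lambda+\tau_\lambda)=0$ in $\Omega$ and the traction $(\sigma_\lambda+\tau_\lambda)\cdot n=g$. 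This is where the Airy-potential reformulation and the trace operators $\gamma_0,\gamma_1$ (and their surjectivity via Theorem \ref{thm:traceop}) do the work: lift $\sigma_\lambda$ to an Airy potential $u_\lambda\in W^{2,2}$, note that $\div\sigma=0$ and the boundary condition $\sigma\cdot n=g$ translate into prescribed Dirichlet/Neumann data $(\gamma_0 u_\lambda,\gamma_1 u_\lambda)$ determined by $g$, and correct $u_\lambda$ by a function with the right boundary traces and controlled $W^{2,2}$ norm — this correction is localized near $\partial\Omega$ and costs energy $o(1)$. I expect the bookkeeping for this corrector, and the verification that it is truly negligible in both the measure-of-support and the $L^2$ senses simultaneously, to be the technically heaviest part of the proof. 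Finally, a diagonal argument over the density and the mollification parameter completes (iii).
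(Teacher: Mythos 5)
Your compactness argument is correct and in fact more direct than the paper's (which goes through Airy potentials and $BH$-compactness): the bound $\lambda^{1/2}\L^2(\{\sigma_\lambda\neq 0\})\le C$, $\lambda^{-1/2}\|\sigma_\lambda\|^2_{L^2}\le C$ plus Cauchy--Schwarz (or simply $\lambda^{-1/2}F_\lambda(\xi)\ge 2|\xi|$) gives a uniform mass bound, hence weak-$*$ compactness. The scaling heuristic in (iii), strips of width $\sim|a_k|\lambda^{-1/2}$ producing the factor $2$, is also the right intuition and mirrors the paper's choice $\e(\lambda)\sim\lambda^{-1/2}$ for the boundary-layer/mollification scale. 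The problems are in (ii) and in the reduction step of (iii).

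For the lower bound you defer everything to ``lower semicontinuity of the quasiconvexified functional,'' but no off-the-shelf theorem covers this situation: the integrands $\lambda^{-1/2}F_\lambda$ (and their $2$-quasiconvexifications $G_\lambda=\lambda^{-1/2}Q_2F_\lambda$) vary with $\lambda$, have only linear growth in the limit, and the limit object is a measure (a $BH$ Airy potential) whose singular part, including mass concentrating on $\partial\Omega$, must be charged with the sharp constant $2$. Establishing exactly this is the content of the paper's Section \ref{sec:proof-lower-bound}: the Fonseca--M\"uller blow-up with Theorem \ref{thm:RN}, the truncation/subadditivity Lemmas \ref{lem:glamtri} and \ref{lem:affinebc} (part (i) gives $\xi(x_0)\ge 2\rho^0(\nabla v(x_0))$ at regular points, part (ii) together with Theorems \ref{thm:BVblow} and \ref{thm:alberti} gives $\zeta(x_0)\ge 2$ on the singular set). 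None of this is replaced by your appeal to the appendix, which only computes $Q_2F_\lambda$ and proves an \emph{upper-bound} relaxation statement (Theorem \ref{thm:relax2}), not lower semicontinuity. Also your claim that the constraint ``is built in'' under weak-$*$ convergence is not correct as stated: weak-$*$ convergence in $\M(\Omega;\Rsym)$ only tests against functions vanishing on $\partial\Omega$, and mass may escape to the boundary; this is precisely why the limit problem is posed on $\overline\Omega$ and why the paper identifies the limit through the Airy potential and the relaxed boundary term $2\int_{\partial\Omega}|\gamma_1(u)-\bar f_2|\,\d\H^1$ (Lemma \ref{lem:gaffine}, equations \eqref{eq:36}--\eqref{eq:46}).

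For the upper bound, your first step -- ``by density and diagonalization it suffices to consider finite polygonal truss measures $\sum_k a_k\,e_k\otimes e_k\,\H^1\ecke L_k$, or smooth densities, with convergence of energies and the divergence constraint preserved'' -- is itself a substantial unproven approximation theorem (essentially the statement that the Michell functional is the relaxation of finite truss systems), and the correctors repairing $\div\sigma_\lambda=0$ at junctions and matching $\sigma_\lambda\cdot n=g$ exactly are only asserted, not constructed; moreover for the ``smooth density'' part of the dense class the strip construction does not apply and you would need a laminate construction realizing $Q_2F_\lambda$, i.e.\ exactly Theorems \ref{thm:Q2F} and \ref{thm:relax2}. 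The paper avoids both difficulties by working throughout at the level of Airy potentials: the recovery sequence is built by inward rescaling and mollification of $u\in BH(\Omega)$ (so the div-free structure is automatic for $\cof\nabla^2$), a trace lift $f_\lambda$ via Theorem \ref{thm:traceop} to enforce the boundary data, the choice $\e(\lambda)\sim\lambda^{-1/2}$ to control the boundary layer, and finally Theorem \ref{thm:relax2} to upgrade the $G_\lambda$-estimate to an honest $F_\lambda$-recovery sequence. So while your outline captures the correct scalings, the two load-bearing ingredients of the actual proof -- the blow-up lower bound and a constraint-compatible approximation/relaxation mechanism for the upper bound -- are missing.
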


\begin{remark}
\label{rem:mainrem}
  \begin{itemize}
\item[(i)] For the sake of simplicity, we have here  set the same boundary conditions $g\in
  H^{-1/2}(\partial\Omega;\R^2)$ for the approximating and the limit
  problem. Actually,  one would like to obtain a larger class of allowed 
  ``boundary conditions'' for the limit problem. For example, one would like to consider
  $g=\sum_{i=1}^Mv_i \delta_{x_i}$, where $x_i\in \partial\Omega$, $v_i\in \R^2$
  for $i=1,\dots,M$ and $\delta_x$ denotes the distribution defined by
  $\delta_x(f)=f(x)$. These boundary values are the ones that one considers in
  the Michell problem, see \cite{bouchitte2008michell}. Distributions $g$ of this
  type are  not in $H^{-1/2}(\partial\Omega;\R^2)$, but they do belong to
  $W^{-1,1}(\partial\Omega;\R^2)$, which at first glance might look like the
  ``natural'' space for the boundary conditions of the limit problem. In our
  main theorem, we will allow  boundary values from  a certain subset of
  $W^{-1,1}(\partial\Omega;\R^2) $ in the limit problem. In particular, this
  subset contains the aforementioned ``delta-type'' distributions, provided that
  the applied forces do not act tangentially, see Lemma \ref{lem:deltabdry}. The functions
  in this space will be
  approximated by boundary values $g_\lambda\in H^{-1/2}(\partial\Omega;\R^2)$.
  \item[(ii)] The main idea of our proof can be summarized as follows: By the
    well-known representation of divergence free stresses via Airy potentials,
    we can formulate the variational problems for finite and vanishing weight as
    the minimization of integral functionals in the spaces $H^{2}$ and $BH$ respectively, where
    the latter denotes the space of functions of bounded Hessian, i.e., the
    space of functions $u\in W^{1,1}$ such that the distributional derivative
    $D^2u$ defines a vector-valued Radon measure. We may then use the blow-up
    technique developed by Fonseca and M\"uller
    \cite{MR1718306,MR1218685,ambrosio1992relaxation} and the results by Kohn
    and Strang \cite{MR820342} and Allaire and Kohn \cite{allaire1993optimal} to
    prove the lower bound part of the $\Gamma$-convergence result. For the
    construction of the upper bound, we use  approximation and relaxation
    results that are well known to specialists. Nevertheless, for some of them,
    we could not find a proof in the literature and provide them here.
\item[(iii)] In their formal derivations of the Michell truss problem in
  \cite{allaire1993optimal}, Allaire and Kohn  also discussed the
  three-dimensional case. We are not able to say anything new about this case: The representation of divergence free stresses via Airy potentials
  is limited to  two dimensions.

\item[(iv)]
  We restrict ourselves to the case of vanishing Poisson ratio for the sake of
  simplicity and readability. The interested reader will be able to generalize
  our results without difficulty to a general ``soft'' isotropic phase, defined by the elasticity
  tensor $A_0\in \mathrm{Lin}(\Rsym;\Rsym)$ with
\[
A_0\xi= 2\mu \left(\xi-\frac12 (\Tr \xi) \id_{2\times 2}\right)+\kappa \Tr \xi
\id_{2\times 2}\quad\text{ for }\xi\in\Rsym\,,
\]
where $\mu,\kappa$ are the shear and bulk modulus respectively, and 
$\mathrm{Lin}(V;W)$ denotes the set of linear operators $V\to W$. In that case,
the functionals for finite $\lambda$ are given by 
\[
\G^{A_0}_{g,\lambda}(\sigma)=\begin{cases}\lambda^{-1/2}\int
  F^{A_0}_\lambda(\sigma)\d x& \text{ if } \sigma\in
  S_g\\+\infty&\text{ else, }\end{cases}
\]
where 
\begin{equation}
F^{A_0}_\lambda(\xi)=\begin{cases}0&\text{ if }\xi=0\\
(A_0^{-1}\xi):\xi+\lambda&\text{ else,}\end{cases}\label{eq:49}
\end{equation}
and the limit functional is given by
$\frac{\kappa+\mu}{\sqrt{4\kappa\mu}}\G_g$. It suffices to take the formulas for
the quasiconvex envelope for $F^{A_0}_\lambda$ from \cite{allaire1993optimal}, and
adapt our proof accordingly.
\end{itemize}
\end{remark}
\subsection{Derivation of the variational  form of the compliance minimization problem}
\label{sec:derivation-variational}
We give a brief derivation of the compliance minimization problem in its
variational form, 
\[
\inf_{\sigma\in S_g(\Omega)}\G_{g,\lambda}(\sigma)\,,
\]
starting from the standard formulation in linear elasticity. What we present
here is a subset of the derivation by Allaire and Kohn, see \cite{MR1859696} for
more details.

\medskip

As before, let $g\in H^{-1/2}(\partial \Omega;
\R^2)$.
Consider an elastic body $\Omega\subset\R^2$, characterized by its
elasticity tensor $A_0\in  \mathrm{Lin}(\Rsym;\Rsym)$, where we assume that
$A_0$ is invertible.
  We remove a subset $H\subset \Omega$ from the elastic body and the new boundaries from that
process shall be traction-free. The resulting linear elasticity problem is to
find $u:\Omega\setminus H\to\R^2$ such that
\[
\begin{split}
  \sigma&=A_0 e(u)\\
  \div \sigma&=0\quad\text{in }\Omega \\
  \sigma\cdot n&=g\quad \text{on }\partial \Omega\\
 \sigma\cdot n&=0\quad \text{on
  }\partial H\,,
\end{split}
\]
where $e(u)=\frac12(\nabla u+\nabla u)^T$.
The compliance (work done by the load) is given by 
\[
c(H)=\int_{\partial\Omega}g\cdot u\d\H^1=\int_{\Omega\setminus H}(A_0
e(u)):e(u)\d x\,,
\]
where $u:\Omega\setminus H\to \R^2$ is the unique solution to the linear elasticity system above.
We want to minimize the compliance under a constraint on the ``weight'' $\L^2(\Omega\setminus H)$.
We do so by the introduction of a Lagrange multiplier $\lambda$, and are
interested in the minimization problem
\[
\min_H \left(c(H)+\lambda \L^2(\Omega\setminus H)\right)\,.
\]
Taking the limit of vanishing
weight corresponds to the limit $\lambda\to\infty$. 
We rephrase the problem by  considering space-dependent elasticity  tensors 
of the form $A(x)=\chi(x)A_0$, where $\chi\in L^\infty(\Omega;\{0,1\})$.
The elasticity system from above becomes

\begin{equation}
\begin{split}
  \sigma=&A(x) e(u)\\
  \div \sigma=&0\quad\text{in }\Omega \\
  \sigma\cdot n=&g\quad \text{on }\partial \Omega\,.
\end{split}\label{eq:55}
\end{equation}
Now the compliance is a functional on the set of permissible elasticity tensors,
and is given by 
\[
c(A)=\int_{\Omega}(A(x)
e(u)):e(u)\d x\,,
\]
where $u$ is the solution of \eqref{eq:55}.
By the principle of minimum complementary energy, we have that the compliance
can be written as
\[
c(A)=\int_\Omega G(A(x),\sigma(x))\d x\,,
\]
where
\[
G(\bar A,\xi)=\begin{cases} +\infty & \text{ if }\xi\neq 0 \text{ and }\bar A=0\\
0 & \text{ if }\xi= 0 \text{ and }\bar A=0\\
(\bar A^{-1}\xi):\xi&\text{ else, } \end{cases}
\]
and $\sigma\in L^\infty(\Omega;\Rsym)$ is a solution of the PDE
\[
\begin{split}
  \div\sigma&=0\quad\text{ in }\Omega \\
\sigma\cdot n&=g\quad\text{ on
  }\partial\Omega\,,
\end{split}
\]
i.e., $\sigma\in S_g(\Omega)$.
We see that the compliance minimization problem can be understood as the
variational problem of finding the infimum 
\[
\inf\left\{ \int_\Omega \left(G(\chi(x) A_0,\sigma(x))+\lambda \chi(x)\right)\d x:
  \chi\in L^\infty(\Omega;\{0,1\}),\,\sigma\in S_g(\Omega)\right\}\,.
\]
Of course, the compliance of a pair $(\chi,\sigma)$ is infinite if  there exists
a set of 
positive measure $U$ such that $\chi=0$ and $\sigma\neq 0$ on $U$. Hence the
above variational problem is equivalent with 
\begin{equation}
\inf\left\{ \int_\Omega F^{A_0}_\lambda(\sigma)\d x:\sigma\in S_g(\Omega)
  \right\}\,,\label{eq:57}
\end{equation}
where $F_\lambda^{A_0}$ has been defined in \eqref{eq:49}.
As is well known, the variational problem \eqref{eq:57} does not possess a
solution in general and requires relaxation. For simplicity, we assume here that
$A_0$ is the identity on $\Rsym$, see Remark  \ref{rem:mainrem}. Then  \eqref{eq:57} is just the variational problem $\G_{g,\lambda}(\sigma)\to \inf$.

\section{Preliminaries}
\label{sec:preliminaries}

For the proof of our main result, we are going to rely heavily on two sets of
results from the literature: On the one hand,
on the results on optimal design in the ``relaxed formulation''
\cite{MR820342,allaire1993optimal,MR1859696}, and on the other hand,  on the blow-up technique for the derivation of relaxed functionals and
$\Gamma$-limits developed by Fonseca and M\"uller
\cite{MR1177778,ambrosio1992relaxation,MR1218685}. In order to present them, we need to
review some basic facts about measures, $BV$ functions and quasiconvexity.

In the following, let $U\subset\R^n$ be open and bounded.

\subsection{Measures}
\label{sec:measures}

At the basis of the blow-up argument that we use in the present work is a
refinement of the well known Radon-Nikodym
differentiation theorem:
\begin{theorem}[Proposition 2.2 in \cite{ambrosio1992relaxation}]
\label{thm:RN}
  Let $\lambda, \mu$ be Radon measures in $U$ with $\mu\geq 0$. Then there
  exists a Borel set $E\subset U$ with $\mu(E)=0$ such that for any $x_0\in  \supp \mu\setminus E$ we have
\[
\lim_{\rho\downarrow 0} \frac{\lambda(x_0+\rho K)}{\mu(x_0+\rho
  K)}=\frac{\d\lambda}{\d\mu}(x_0)
\]
for any bounded convex set $K$ containing the origin. Here, the set $E$ is
independent of $K$.
\end{theorem}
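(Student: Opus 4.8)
The plan is to deduce the statement from the classical Besicovitch differentiation theorem for balls and then to enlarge the class of admissible sets $K$ while retaining one $K$-independent exceptional set. By the Jordan decomposition we may assume $\lambda\ge 0$, and the Radon--Nikodym/Lebesgue decomposition gives $\lambda=g\,\mu+\lambda^s$ with $g:=\d\lambda/\d\mu\in L^1_{\loc}(\mu)$, $g\ge 0$, and $\lambda^s\ge 0$, $\lambda^s\perp\mu$. If $0$ lies in the interior of $K$ and $x_0\in\supp\mu$, then $x_0+\rho K$ contains a ball about $x_0$, so $\mu(x_0+\rho K)>0$ for small $\rho$ and the quotient in the statement makes sense; what has to be controlled, uniformly over $K$ and with a single $\mu$-null exceptional set, is
\[
\lim_{\rho\downarrow 0}\frac{1}{\mu(x_0+\rho K)}\int_{x_0+\rho K}\bigl|g(y)-g(x_0)\bigr|\,\d\mu(y)=0
\quad\text{and}\quad
\lim_{\rho\downarrow 0}\frac{\lambda^s(x_0+\rho K)}{\mu(x_0+\rho K)}=0,
\]
since, together with $g(x_0)<\infty$, these give $\lambda(x_0+\rho K)/\mu(x_0+\rho K)\to(\d\lambda/\d\mu)(x_0)$.

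For a \emph{single} fixed body both limits are standard. The Besicovitch covering theorem is a purely metric statement about $\R^n$, independent of the measure, so it yields the Vitali covering property of $\mu$ with respect to balls, hence the weak-$(1,1)$ bound for the $\mu$-maximal operator and the usual Besicovitch differentiation theorem; this produces a $\mu$-null set $E_0$ such that every $x_0\in\supp\mu\setminus E_0$ is a $\mu$-Lebesgue point of $g$ and a point of zero $\mu$-density for $\lambda^s$, which settles both limits when $K$ is a ball centred at $0$. The same argument applies verbatim to any fixed bounded convex body $K$ with $0\in\mathrm{int}\,K$, since the homothets $\{x+\rho K\}$ still satisfy a Besicovitch-type covering theorem with a constant depending only on the eccentricity of $K$; call the resulting null set $E_K$. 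Now fix a countable family $\mathcal C$ of convex bodies with $0$ in the interior such that every bounded convex $K$ with $0\in\mathrm{int}\,K$ can be squeezed $P\subseteq K\subseteq Q$ with $P,Q\in\mathcal C$ and $Q\subseteq(1+\e)P$ for arbitrarily small $\e>0$ --- for instance the rational polytopes containing $B_{1/m}(0)$ for some $m\in\N$ --- and put $E:=E_0\cup\bigcup_{C\in\mathcal C}E_C$, which is again $\mu$-null.

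It remains to pass from $\mathcal C$ to an arbitrary bounded convex $K$. Given such a $K$ with $0\in\mathrm{int}\,K$, $x_0\in\supp\mu\setminus E$, and the squeeze $P\subseteq K\subseteq Q$ from $\mathcal C$, monotonicity of $\lambda\ge0$ and $\mu$ gives
\[
\frac{\lambda(x_0+\rho P)}{\mu(x_0+\rho Q)}\ \le\ \frac{\lambda(x_0+\rho K)}{\mu(x_0+\rho K)}\ \le\ \frac{\lambda(x_0+\rho Q)}{\mu(x_0+\rho P)}\,,
\]
and, writing the outer terms as $\frac{\lambda(x_0+\rho P)}{\mu(x_0+\rho P)}\cdot\frac{\mu(x_0+\rho P)}{\mu(x_0+\rho Q)}$ and $\frac{\lambda(x_0+\rho Q)}{\mu(x_0+\rho Q)}\cdot\frac{\mu(x_0+\rho Q)}{\mu(x_0+\rho P)}$, the first factors already tend to $(\d\lambda/\d\mu)(x_0)$ by the previous step. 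The squeeze therefore closes as soon as one knows that, for $\mu$-a.e.\ $x_0$, the quotient $\mu(x_0+\rho Q)/\mu(x_0+\rho P)$ tends to $1$ as the approximation is refined --- equivalently, that the $\mu$-mass of the thin convex shell $x_0+\rho(Q\setminus P)$ is negligible against $\mu(x_0+\rho P)$, uniformly in $\rho$. I expect this to be the real difficulty: it is exactly the content of the claim that $E$ is independent of $K$, and it cannot be obtained by the crude comparison of $x_0+\rho K$ with an enclosing ball $B_{R\rho}(x_0)$, because $\mu(x_0+\rho K)$ and $\mu(B_{R\rho}(x_0))$ need not be comparable. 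A plausible way to handle it is to enlarge $E$ further by the exceptional sets for the differentiation of $\mu$ against Lebesgue measure $\mathcal L^n$ along the bodies of $\mathcal C$: on the $\mathcal L^n$-absolutely continuous part of $\mu$ this yields $\mu(x_0+\rho Q)/\mu(x_0+\rho P)\to\mathcal L^n(Q)/\mathcal L^n(P)$, which is as close to $1$ as desired, while the purely singular part has to be treated at $\mu$-a.e.\ $x_0$ by a blow-up / tangent-measure argument, where the relevant portion of $\mu$ degenerates to a lower-dimensional and essentially self-similar object for which thin shells are again negligible. Finally, the degenerate case $0\in\partial K$ is recovered by exhausting $K$ from inside by bodies with $0$ in the interior, the discrepancy being supported on atoms of $\mu$, which can be absorbed into $E$.
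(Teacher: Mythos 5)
First, note that the paper does not prove this statement at all: it is quoted as Proposition 2.2 of \cite{ambrosio1992relaxation}, so the only meaningful comparison is with the proof given there. Measured against that, your argument has a genuine gap, and it sits exactly where you suspect. The squeeze between $P\subseteq K\subseteq Q\subseteq(1+\e)P$ forces you to show that, for $\mu$-a.e.\ $x_0$, $\mu(x_0+\rho Q)/\mu(x_0+\rho P)$ is asymptotically close to $1$, i.e.\ that thin convex shells $x_0+\rho(Q\setminus P)$ carry negligible $\mu$-mass relative to $\mu(x_0+\rho P)$ for all small $\rho$. Since $\mu(x_0+\rho Q)\leq\mu(x_0+(1+\e)\rho P)$, this is an asymptotic doubling property of $\mu$ at dilation factors close to $1$, and a general Radon measure need not satisfy any such property $\mu$-a.e.; it is also strictly stronger than the theorem you are trying to prove, which only ever compares $\lambda$ and $\mu$ on the \emph{same} set, never $\mu$ on two different sets. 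Your proposed repairs do not close this: the Lebesgue-point argument only handles the part of $\mu$ absolutely continuous with respect to $\L^n$, and the blow-up/tangent-measure sketch for the singular part has no force --- tangent measures of a general Radon measure at $\mu$-typical points need not be lower-dimensional or self-similar in any sense (Preiss's examples show they can be essentially arbitrary), and even granting good blow-ups one would need uniform control along scales, which is again a doubling-type hypothesis you do not have. The concluding remark that the case $0\in\partial K$ only produces discrepancies "supported on atoms of $\mu$" is likewise unsubstantiated.

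The proof in \cite{ambrosio1992relaxation} avoids per-$K$ exceptional sets and the squeeze altogether. One fixes an eccentricity bound $R$ and considers \emph{simultaneously} all bounded convex sets $K$ with the origin attached and eccentricity at most $R$; for such a family the homothets $x+\rho K$ admit a Besicovitch/Morse-type covering theorem whose constant depends only on $n$ and $R$, hence the maximal operator defined as a supremum over \emph{all} these $K$ at once is weak-$(1,1)$ with respect to $\mu$. The standard differentiation argument (applied to $|g-g(x_0)|\,\mu$ and to $\lambda^s$, as in your first step) then produces a single $\mu$-null set $E_R$ that works for every $K$ in the class, and $E=\bigcup_{R\in\N}E_R$ is independent of $K$. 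In other words, the uniformity over $K$ has to be built into the covering/maximal estimate itself; once you do that, the countable approximating family and the shell comparison become unnecessary, and without it the passage from your countable family to an arbitrary $K$ does not go through.
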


Let $\mu_j\in \M(U;\R^p)$ for $j=1,2,\dots$. We say that $\mu_j\to \mu\in \M(U;\R^p)$
weakly * in the sense of measures if
\[
\int_{U}\varphi\cdot\frac{\d\mu_j}{\d|\mu_j|}\d|\mu_j|\to 
\int_{U}\varphi\cdot\frac{\d\mu}{\d|\mu|}\d|\mu|
\quad\text{ for every } \varphi\in C^0_c(U;\R^p)\,.
\]
It is a well known fact that if $\sup_{j\in\N}|\mu_j|(U)<\infty$, then
there exists a weakly * convergent subsequence.

The next result concerns the convergence of positively
one-homogeneous functions of measures. The statement below is contained in Theorem 1.15 in \cite{demengel1990weak}.
\begin{theorem}
\label{thm:homomeas}
Let $h:\R^p\to\R$ be positively one-homogeneous and continuous, and let $\mu_j\in
\M(U;\R^p)$, $j\in\N$, such that $\mu_j\to \mu$, $|\mu_j|\to |\mu|$ weakly
* in the sense of measures. Then
\[
h(\mu_j)\to h(\mu)\quad\text{ weakly * in the sense of measures.}
\]
\end{theorem}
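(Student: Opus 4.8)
The plan is to reduce the assertion, by localization, to the classical continuity theorem of Reshetnyak (a version of which is the content of Theorem~1.15 in \cite{demengel1990weak}), and to recall the short proof of the latter.

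\emph{Preliminary reductions.} Since $h$ is continuous and positively one-homogeneous it is bounded on the unit sphere, so $|h(\xi)|\le C|\xi|$ for all $\xi$, and hence $|h(\mu_j)|\le C|\mu_j|$ as measures. Testing $|\mu_j|\wsto|\mu|$ against cut-off functions yields $\sup_j|\mu_j|(K)<\infty$ for every compact $K\subset U$, so $\sup_j|h(\mu_j)|(K)<\infty$; thus $\{h(\mu_j)\}$ is weakly-$*$ sequentially precompact in $\M(U)$, and a standard lower semicontinuity estimate shows that any weak-$*$ limit $\nu$ of a subsequence satisfies $|\nu|\le C|\mu|$. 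It therefore suffices to prove that every such $\nu$ equals $h(\mu)$; and since $\nu$ and $h(\mu)$ are both absolutely continuous with respect to $|\mu|$, it is enough to check $\nu(V)=h(\mu)(V)$ for every open $V\Subset U$ with $|\mu|(\partial V)=0$, the values of a Radon measure on sets of this form determining it uniquely. (When $h$ is linear the statement is immediate, $h(\mu_j)=\sum_i a_i(\mu_j)_i\wsto\sum_i a_i\mu_i=h(\mu)$ directly from $\mu_j\wsto\mu$; the point is the general one-homogeneous case.)

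\emph{Localization.} I would fix such a $V$ and a subsequence with $h(\mu_j)\wsto\nu$. From $|\mu_j|\wsto|\mu|$ and $|\mu|(\partial V)=0$ one obtains $|\mu_j|(V)\to|\mu|(V)$, and since $|h(\mu_j)|\le C|\mu_j|$ the mass of $h(\mu_j)$ in a neighborhood of $\partial V$ is uniformly small; hence $\mu_j\ecke V\wsto\mu\ecke V$ in $\M(\overline V;\R^p)$, $|\mu_j|(V)\to|\mu|(V)$, and $h(\mu_j)(V)\to\nu(V)$. On the bounded open set $V$ these are exactly the hypotheses of Reshetnyak's continuity theorem, which gives
\[
h(\mu_j)(V)=\int_V h\!\left(\frac{\d\mu_j}{\d|\mu_j|}\right)\d|\mu_j|\ \longrightarrow\ \int_V h\!\left(\frac{\d\mu}{\d|\mu|}\right)\d|\mu|=h(\mu)(V),
\]
and comparison with $h(\mu_j)(V)\to\nu(V)$ forces $\nu(V)=h(\mu)(V)$, completing the argument modulo Reshetnyak's theorem.

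\emph{Proof of Reshetnyak's theorem on $V$.} I would define $\theta_j\in\M(\overline V\times S^{p-1})$ by $\int g\,\d\theta_j=\int_V g\big(x,\tfrac{\d\mu_j}{\d|\mu_j|}(x)\big)\,\d|\mu_j|(x)$ for $g\in C^0(\overline V\times S^{p-1})$. These are nonnegative with total mass $|\mu_j|(V)$, which is bounded, so along a subsequence $\theta_j\wsto\theta\in\M(\overline V\times S^{p-1})$. The first marginal of $\theta_j$ is $|\mu_j|\ecke V$, which converges weakly-$*$ to $|\mu|\ecke V$; hence the first marginal of $\theta$ is $|\mu|\ecke V$, and we may disintegrate $\theta=\int_V\theta_x\,\d|\mu|(x)$ with $\theta_x$ a probability measure on $S^{p-1}$ for $|\mu|$-a.e.\ $x$. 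Testing against $g(x,e)=\phi(x)\,(a\cdot e)$ with $\phi\in C^0(\overline V)$, $a\in\R^p$, and using $\int_V\phi\,a\cdot\d\mu_j\to\int_V\phi\,a\cdot\d\mu$ (which follows from $\mu_j\ecke V\wsto\mu\ecke V$), one finds that the barycenter $\int_{S^{p-1}}e\,\d\theta_x(e)$ equals $\tfrac{\d\mu}{\d|\mu|}(x)$ for $|\mu|$-a.e.\ $x$. Since $\tfrac{\d\mu}{\d|\mu|}(x)\in S^{p-1}$ and the closed Euclidean unit ball is strictly convex, a probability measure on $S^{p-1}$ whose barycenter lies on $S^{p-1}$ must be the Dirac mass there; thus $\theta_x=\delta_{\d\mu/\d|\mu|(x)}$ and $\theta=\int_V\delta_{(x,\,\d\mu/\d|\mu|(x))}\,\d|\mu|(x)$. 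As this limit does not depend on the subsequence, the whole sequence $\theta_j$ converges to it, and testing against the continuous function $g(x,e)=h(e)$ yields the displayed convergence.

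The only genuinely substantial point is this last identification of $\theta$ as a Young measure concentrated on Dirac masses, which rests on the disintegration theorem together with the strict convexity of the Euclidean ball; the mass bounds, the reduction to sets with $|\mu|$-null boundary, and the reassembly of subsequential limits are all routine. One could of course instead quote Theorem~1.15 of \cite{demengel1990weak} directly; the argument sketched here merely makes explicit how the statement follows from the more classical total-mass form of Reshetnyak's theorem.
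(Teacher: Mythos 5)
Your argument is correct, but note that the paper does not contain a proof of Theorem~\ref{thm:homomeas} to compare against: the statement is taken as a black box from Theorem~1.15 of \cite{demengel1990weak}. What you have written is essentially the standard proof of Reshetnyak's continuity theorem, localized so as to pass from the total-mass form to weak-$*$ convergence of the measures $h(\mu_j)$: you reduce to open sets $V\Subset U$ with $|\mu|(\partial V)=0$ (on which $|\mu_j|(V)\to|\mu|(V)$ and $\mu_j\ecke V\wsto\mu\ecke V$ against $C^0(\overline V)$), lift to measures $\theta_j$ on $\overline V\times S^{p-1}$, identify the disintegration of any limit $\theta$ as Dirac masses $\delta_{\d\mu/\d|\mu|(x)}$ using the barycenter computation and the strict convexity of the Euclidean unit ball, and then test with $g(x,e)=h(e)$; the precompactness of $\{h(\mu_j)\}$ and the estimate $|\nu|\leq C|\mu|$ for subsequential limits make the reassembly legitimate. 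All steps use only the stated hypotheses ($h$ continuous and positively one-homogeneous, $\mu_j\wsto\mu$, $|\mu_j|\wsto|\mu|$), and the points you flag as routine (null-boundary localization, determination of a Radon measure by its values on such sets, subsequence bookkeeping) are indeed routine. The trade-off relative to the paper is simply self-containedness versus brevity: the paper obtains the result by citation, while your argument exhibits the mechanism, at the cost of invoking the disintegration theorem, and in substance reproves the cited result of \cite{demengel1990weak} rather than offering a shortcut around it.
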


\subsection{$BV$ and $BH$ functions}
\label{sec:bv-functions}
The space of functions of bounded variation
$BV(U)$ is defined as the set of functions $f\in L^1(U)$ that satisfy
\[
\sup\left\{-\int_{U} f\div \varphi\,\d x:\varphi\in
C^1_c(U;\R^n),\, \|\varphi\|_{C^0}\leq 1\right\}<\infty\,.
\]
In this case, the map $\varphi\mapsto -\int_{U} f\div \varphi\,\d x$ defines a
vector valued Radon measure, which is denoted by $Dv$.

According to the Theorem~\ref{thm:RN}, we have the following decomposition of the
measure $Dv$ for $v\in BV(U)$,
\[
Dv=\nabla v \,\L^n+D^sv\,,
\]
where $\nabla v=\frac{\d(Dv)}{\d\L^n}$ and $D^sv$ is the so-called singular part
of $Dv$.

The following theorem determines the structure of  blow-ups of $BV$ functions:

\begin{theorem}[Theorem 2.3 in \cite{ambrosio1992relaxation}]
  \label{thm:BVblow}
Let $u\in BV(U;\R^m)$ and for a bounded convex open set $K$ containing the
origin, and let $\xi$ be the density of $Du$ with respect to $|Du|$,
$\xi=\frac{\d(Du)}{\d(|Du|)}$.
For $x_0\in \supp (|Du|)$, assume that $\xi(x_0)=\eta\otimes \nu$ with $\eta\in\R^m$,
$\nu\in\R^n$, $|\eta|=|\nu|=1$, and for
$\rho>0$ let 
\[
v_\rho(y)=\frac{\rho^{n-1}}{|Du|(x_0+\rho K))}\left(u(x_0+\rho
y)-\fint_{x_0+\rho K}u(x')\d x'\right)\,.
\]
Then for every $\sigma\in (0,1)$ there exists a sequence $\rho_j$ converging to
0 such that $v_{\rho_j}$ converges in $L^1(K;\R^m)$ to a function $v\in
BV(K;\R^m)$ which satisfies $|Dv|(\sigma \overline{K})\geq \sigma^n$ and can be
represented as 
\[
v(y)=\psi(y\cdot \nu)\eta
\]
for a suitable non-decreasing function $\psi:(a,b)\to \R$, where $a=\inf\{y\cdot
\nu:y\in K\}$ and $b=\sup\{y\cdot
\nu:y\in K\}$.
\end{theorem}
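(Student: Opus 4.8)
The plan is to reproduce the blow-up argument of Ambrosio and Dal Maso, with the refined Radon--Nikodym theorem (Theorem~\ref{thm:RN}) carrying essentially the whole load. After a translation we may assume $x_0=0$, and we abbreviate $\mu:=|Du|$. The only preliminary is the observation that the normalisation in the definition of $v_\rho$ is precisely the one for which a change of variables gives $Dv_\rho(A)=Du(\rho A)/\mu(\rho K)$ and $|Dv_\rho|(A)=\mu(\rho A)/\mu(\rho K)$ for Borel $A\subset\overline K$; in particular $|Dv_\rho|(K)=1$ for every $\rho$, and the subtraction of the mean forces $\fint_K v_\rho\,\d y=0$.

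The first real step is to choose the sequence $\{\rho_j\}$, and the key device is to apply Theorem~\ref{thm:RN} with $\mu$ as the positive measure and $\mathcal{L}^n$ as the signed measure. Because the exceptional set there is independent of the convex test set, and discarding a $\mu$-null set so that our base point avoids it, the limit $\beta:=\lim_{\rho\downarrow0}\mathcal{L}^n(\rho K)/\mu(\rho K)=|K|\lim_{\rho\downarrow0}\rho^n/\mu(\rho K)$ exists and is finite, and likewise with $K$ replaced by $\sigma K$. If $\beta>0$, then $\mu(\rho K)\sim|K|\rho^n/\beta$ and $\mu(\rho\sigma K)\sim\sigma^n|K|\rho^n/\beta$, so $\mu(\rho\sigma\overline K)/\mu(\rho K)\to\sigma^n$ and any $\rho_j\downarrow0$ works. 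If $\beta=0$, then $\mu(\rho K)/\rho^n\to\infty$; if one had $\mu(\rho\sigma\overline K)<(\sigma^n-\delta)\mu(\rho K)$ for all small $\rho$ and some $\delta>0$, iterating this bound along $\rho,\sigma\rho,\sigma^2\rho,\dots$ and using monotonicity of $\rho\mapsto\mu(\rho K)$ would force $\mu(\rho K)/\rho^n\to0$, a contradiction; hence $\limsup_{\rho\downarrow0}\mu(\rho\sigma\overline K)/\mu(\rho K)\ge\sigma^n$, and we fix $\rho_j\downarrow0$ realising this $\limsup$. In either case we have a sequence $\rho_j\downarrow0$ with $|Dv_{\rho_j}|(K)=1$ and $\liminf_j|Dv_{\rho_j}|(\sigma\overline K)=\liminf_j\mu(\rho_j\sigma\overline K)/\mu(\rho_j K)\ge\sigma^n$.

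Now for compactness and the structure of the limit. By the Poincar\'e--Wirtinger inequality on $K$ together with $\fint_K v_{\rho_j}=0$ and $|Dv_{\rho_j}|(K)=1$, the sequence is bounded in $BV(K;\R^m)$, so after passing to a subsequence $v_{\rho_j}\to v$ in $L^1(K;\R^m)$, $Dv_{\rho_j}\wsto Dv$, and $|Dv_{\rho_j}|\wsto\kappa$ for some positive measure $\kappa$ on $\overline K$ with $\kappa(\overline K)\le1$. Applying Theorem~\ref{thm:RN} again, now with $\mu=|Du|$ and $\lambda$ ranging over the components of $Du$, gives $Dv_\rho(K')/|Dv_\rho|(K')=Du(\rho K')/\mu(\rho K')\to\eta\otimes\nu$ for every bounded convex $K'\ni0$. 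Passing to the limit along $\rho_j$ on those convex $K'\subset K$ containing $0$ with $\kappa(\partial K')=0$, one gets $Dv(K')=(\eta\otimes\nu)\,\kappa(K')$; since such sets generate the Borel $\sigma$-algebra of $K$ (so that a measure is determined by its values on them), this yields $Dv=(\eta\otimes\nu)\,\kappa$ on $K$, and taking total variations and using $|\eta\otimes\nu|=|\eta|\,|\nu|=1$ gives $\kappa=|Dv|$ on $K$ and $\frac{\d Dv}{\d|Dv|}=\eta\otimes\nu$ $|Dv|$-a.e. From this last identity, every directional derivative of $v$ in a direction orthogonal to $\nu$ vanishes, so by convexity of $K$ (its slices $\{y\cdot\nu=t\}\cap K$ are connected intervals) $v$ depends only on $y\cdot\nu$; since moreover the variation is carried in the direction $\eta$, we obtain $v(y)=\eta\,\psi(y\cdot\nu)$ for a scalar $\psi\in BV((a,b))$, the mean-zero normalisation fixing the additive constant, and $D\psi=\nu\,|Dv|$ being a non-negative multiple of $\nu$ forces $\psi$ to be non-decreasing (a one-dimensional fact proved by mollification).

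Finally, since $\kappa$ agrees with $|Dv|$ on $K$ and $\sigma\overline K$ is a closed subset of the open set $K$, weak* convergence of $|Dv_{\rho_j}|$ to $\kappa$ gives $|Dv|(\sigma\overline K)=\kappa(\sigma\overline K)\ge\limsup_j|Dv_{\rho_j}|(\sigma\overline K)\ge\liminf_j|Dv_{\rho_j}|(\sigma\overline K)\ge\sigma^n$, which is the asserted bound. The genuinely delicate point is the choice of $\{\rho_j\}$: one must keep the blow-up from shedding too much of its unit mass onto $K\setminus\sigma\overline K$, and the mechanism that rescues this is exactly the $\lambda=\mathcal{L}^n$ instance of Theorem~\ref{thm:RN}, which separates the ``absolutely continuous'' regime, where the mass ratio converges to exactly $\sigma^n$, from the ``singular'' regime $\mu(\rho K)/\rho^n\to\infty$, where monotonicity forces the $\limsup$ of the ratio to be at least $\sigma^n$. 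Everything else is a routine combination of Poincar\'e's inequality, weak* compactness of measures, and Theorem~\ref{thm:RN}.
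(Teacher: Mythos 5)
The paper does not prove this statement at all: Theorem \ref{thm:BVblow} is quoted verbatim as Theorem 2.3 of \cite{ambrosio1992relaxation}, so there is no internal proof to compare against. Your argument is, in essence, a correct reconstruction of the Ambrosio--Dal Maso blow-up proof, and the three pillars are the right ones: the change of variables giving $|Dv_\rho|(A)=|Du|(x_0+\rho A)/|Du|(x_0+\rho K)$, the application of Theorem \ref{thm:RN} with $\lambda=\L^n$ (whose $K$-independent exceptional set yields $\liminf_{\rho\to0}|Du|(x_0+\rho K)/\rho^n>0$ and hence, via your iteration along $\rho\sigma^k$, the mass bound $\limsup_\rho |Du|(x_0+\rho\sigma\overline K)/|Du|(x_0+\rho K)\geq\sigma^n$), and the application of Theorem \ref{thm:RN} with $\lambda=Du$ to identify $Dv=(\eta\otimes\nu)\,\kappa$, which simultaneously gives $\kappa=|Dv|$ on $K$ (so the mass estimate survives the passage $|Dv_{\rho_j}|\wsto\kappa$, which is the one place where $|Dv|\leq\kappa$ alone would not suffice) and the one-dimensional structure $v(y)=\psi(y\cdot\nu)\eta$ with $\psi$ non-decreasing because $\kappa\geq0$. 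Three small points deserve tightening. First, when you deduce $Dv=(\eta\otimes\nu)\kappa$ from its validity on convex sets $K'\ni0$ with $\kappa(\partial K')=0$, you should restrict to $K'\Subset K$ (sets touching $\partial K$ may interact with mass of $\kappa$ on $\partial K$) and invoke not merely that these sets generate the Borel $\sigma$-algebra but that they form a $\pi$-system containing an exhaustion of $K$ (e.g.\ $\sigma'K$, $\sigma'\uparrow1$, along $\kappa$-null boundaries), since generation alone does not determine a measure; both repairs are immediate. Second, by discarding the extra $\mu$-null set needed for the $\lambda=\L^n$ application you prove the statement for $|Du|$-a.e.\ $x_0$ with rank-one density rather than literally for every $x_0$ at which $\xi(x_0)=\eta\otimes\nu$; this matches how the theorem is actually invoked in the proof of the lower bound (at $|D^s v|$-a.e.\ point), and it is also how the hypothesis is meant in \cite{ambrosio1992relaxation}, but it is worth stating. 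Third, the structure step is cleaner if ordered as: $D(c\cdot v)=0$ for $c\perp\eta$ forces those components to be constant, then $w=\eta\cdot v$ has $Dw=\nu\kappa$, so $w=\psi(y\cdot\nu)$ with $\psi$ non-decreasing, and the mean-zero normalisation kills the constant orthogonal to $\eta$. None of these affects the validity of the approach.
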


We will also need Alberti's  rank-one theorem:

\begin{theorem}
\label{thm:alberti}
 Let $v\in BV(U;\R^m)$. Then $D^sv$  is rank-one.
\end{theorem}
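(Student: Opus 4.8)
This is Alberti's rank-one theorem; ``$D^sv$ is rank one'' is to be read as: the polar density $\frac{\d D^sv}{\d|D^sv|}(x)\in\R^{m\times n}$ has rank at most one for $|D^sv|$-a.e.\ $x$. Since a $1\times n$ matrix always has rank $\le1$, I may assume $m\ge2$. My plan is to reduce, by a blow-up at a $|D^sv|$-typical point, to the following rigidity statement, which is the algebraic heart of the matter: \emph{if $\tau\in\M(\R^n;\R^{m\times n})$ is curl-free — meaning every row of $\tau$ is the distributional gradient of a scalar $BV_{\loc}$ function, which is automatic when $\tau$ comes from the derivative of a $BV$ map — and $\tau=P\sigma$ with $P\in\R^{m\times n}$ a fixed matrix of rank $\ge2$ and $\sigma\ge0$ a Radon measure, then $\sigma=c\,\L^n$ for some $c\ge0$; in particular $\tau\ll\L^n$.} Granting this, I would choose $x_0$ at which $\frac{\d D^sv}{\d|D^sv|}$ is $|D^sv|$-approximately continuous with value $P$ and at which $D^sv$ has a tangent measure $\tau$ (a weak-$*$ limit of rescalings $D^sv(x_0+\rho_j\,\cdot)/|D^sv|(B_{\rho_j}(x_0))$ with $\rho_j\downarrow0$); then $\tau=P\sigma$ for a nonzero tangent measure $\sigma$ of $|D^sv|$, and $\tau$ is again curl-free, since that property survives translations, dilations and weak-$*$ limits. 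If $\mathrm{rank}\,P\ge2$ held on a set of positive $|D^sv|$-measure, the rigidity statement would force $\tau\ll\L^n$ at such $x_0$, and the contradiction should come from $x_0$ belonging to the singular part.

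For the rigidity statement I would proceed as follows. Let $v_1,v_2\in\R^n$ be two linearly independent rows of $P$. Each row $\tau_i$ of $\tau$ equals $(\text{that row of }P)\,\sigma$ and is curl-free, hence $=\nabla u_i$ with $u_i\in BV_{\loc}(\R^n)$. Because $\sigma\ge0$, the measure $\nabla u_1=v_1\sigma$ has polar direction $v_1/|v_1|$ everywhere, so $\partial_w u_1=0$ for each $w\perp v_1$, and therefore $u_1(x)=\phi_1(x\cdot v_1)$ for some $\phi_1\in BV_{\loc}(\R)$. Splitting coordinates along $\R v_1\oplus v_1^\perp$, this says $\sigma=\frac{1}{|v_1|^2}\,v_1\cdot\nabla u_1$ is a one-dimensional measure in the $v_1$-direction tensored with $\L^{n-1}$ on $v_1^\perp$; in particular $\sigma$ is invariant under every translation by a vector of $v_1^\perp$. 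Running the same argument with $v_2$, $\sigma$ is also invariant under translations by vectors of $v_2^\perp$. Since $v_1,v_2$ are independent, $v_1^\perp\cup v_2^\perp$ spans $\R^n$, so $\sigma$ is translation-invariant, hence a nonnegative multiple of $\L^n$.

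The part I expect to be the real obstacle is the blow-up reduction, not the rigidity statement. The trouble is that tangent measures of a measure singular with respect to $\L^n$ need not be singular — they can even be multiples of $\L^n$ — so one cannot simply blow up $D^sv$ and quote the rigidity statement to reach a contradiction; and the blow-up Theorem \ref{thm:BVblow} available above is of no help, since it already presupposes the density to be rank-one. Making the reduction work requires genuine extra input: in Alberti's original argument it is supplied by a one-dimensional sectioning of the domain combined with the almost-everywhere differentiability of Lipschitz functions, while a more recent route is to specialize the structure theorem for $\mathcal{A}$-free measures of De Philippis and Rindler to $\mathcal{A}=\curl$. For the present paper it suffices to invoke the theorem as stated in the literature.
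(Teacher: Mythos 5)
The paper offers no proof of this statement at all: it is quoted as Alberti's rank-one theorem, a known deep result from the literature, so your closing decision to ``invoke the theorem as stated in the literature'' is exactly the paper's own treatment. Your attached sketch is a fair account of the modern viewpoint: the rigidity lemma you formulate (a curl-free measure of the form $P\sigma$ with $\mathrm{rank}\,P\geq 2$ and $\sigma\geq 0$ forces $\sigma$ to be translation invariant, hence a multiple of $\L^n$) is correct as you argue it, and you are equally correct that the blow-up reduction is where the real difficulty sits --- tangent measures of a singular measure can be absolutely continuous, so the naive contradiction is unavailable, and closing that gap is essentially the content of Alberti's original argument or of the De Philippis--Rindler structure theorem for $\mathcal{A}$-free measures. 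Since you flag that gap honestly rather than paper over it, and since the paper itself relies on the citation, nothing further is required here.
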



For later reference,  we also mention that for  $u\in W^{1,1}(U;\R^m)$, we have as a
consequence of the classical Sobolev embeddings, that for almost every $x_0\in
U$, we have
\begin{equation}
\lim_{\e\to 0}\frac{1}{\e}\left(\fint_{Q(x_0,\e)}|u(x)-u(x_0)-\nabla u(x_0)\cdot(x-x_0)|^{n/(n-1)}\d
  x\right)^{(n-1)/n}=0\,.\label{eq:47}
\end{equation}
Here, we have used the notation $Q(x_0,\e)=x_0+[-\e/2,\e/2]^n$.

\medskip

The space of functions of bounded Hessian is defined as
\[
BH(U)=\{u\in W^{1,1}(U), \nabla u\in BV(U;\R^n)\}\,.
\]
It can be made into a normed space by setting
\[
\|u\|_{BH(U)}=\|u\|_{W^{1,1}(U)}+|D^2u|(U)\,.
\]
We say that a sequence $u_j\in BH(U)$ converges weakly * to $u\in
BH(U)$ if $u_j\to u$ in $W^{1,1}(U)$ and $D^2u_j\to D^2 u$ weakly * in
$\M(U;\R^{n\times n})$.

The space $BH(U)$ has been investigated first in
\cite{demengel1984fonctions,demengel1989compactness}. In particular, these
papers contain theorems about compactness and extension properties of this
space. 
The first theorem that we cite is a weakened form of Theorem 1.3 in \cite{demengel1989compactness}:
\begin{theorem}
\label{thm:BHcompact}
Let $u_j$ be a bounded
  sequence in $BH(U)$. Then there exists a subsequence (no relabeling) and
  $u\in BH(U)$ such that
\[
u_j\to u\quad\text{ weakly * in }BH(U)\,.
\]
\end{theorem}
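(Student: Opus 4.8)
The plan is to reduce the statement to the classical compactness theorem for $BV$ functions, applied both to $u_j$ itself and --- this is the point --- to its gradient $\nabla u_j$. First I would unpack the uniform bound: from $\sup_j\|u_j\|_{BH(U)}\le C$ one reads off, on the one hand, $\sup_j\|u_j\|_{W^{1,1}(U)}\le C$, hence (since $u_j\in W^{1,1}$, so that $|Du_j|(U)=\int_U|\nabla u_j|\,\d x$) $\sup_j\|u_j\|_{BV(U)}\le C$; and on the other hand $\sup_j\big(\|\nabla u_j\|_{L^1(U)}+|D^2u_j|(U)\big)\le C$, i.e.\ $\sup_j\|\nabla u_j\|_{BV(U;\R^n)}\le C$.

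Next I would apply the compactness theorem for $BV$ (Rellich's theorem) successively to $\{\nabla u_j\}$ and to $\{u_j\}$, passing to subsequences (not relabeled): from $\{\nabla u_j\}$ one obtains $w\in BV(U;\R^n)$ with $\nabla u_j\to w$ strongly in $L^1(U;\R^n)$ and $D^2u_j=D(\nabla u_j)\wsto Dw$ weakly $*$ in $\M(U;\R^{n\times n})$; from $\{u_j\}$ one obtains $u\in BV(U)$ with $u_j\to u$ strongly in $L^1(U)$. (Alternatively, for $u_j$ one could use the Sobolev embedding $BH(U)\hookrightarrow W^{1,n/(n-1)}(U)$ together with Rellich--Kondrachov, but the $BV$ route is the most economical.) It then remains to identify the limits: letting $j\to\infty$ in $\int_U u_j\,\div\varphi\,\d x=-\int_U\nabla u_j\cdot\varphi\,\d x$, valid for $\varphi\in C^1_c(U;\R^n)$ since $u_j\in W^{1,1}$, and using the two strong $L^1$ convergences gives $\nabla u=w$ in the sense of distributions. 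Hence $\nabla u=w\in BV(U;\R^n)$, so $u\in BH(U)$; moreover $D^2u_j\wsto Dw=D^2u$ weakly $*$ in $\M(U;\R^{n\times n})$, and $u_j\to u$ in $W^{1,1}(U)$ because $u_j\to u$ and $\nabla u_j\to\nabla u$ in $L^1(U)$. By the definition of weak $*$ convergence in $BH(U)$, this is precisely $u_j\to u$ weakly $*$ in $BH(U)$.

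The only delicate point --- the one I expect to be the main, if not a deep, obstacle --- is that the compact embedding $BV(U)\hookrightarrow\hookrightarrow L^1(U)$ used above requires some regularity of $\partial U$; for a general bounded open set one either invokes the $BV$- and $BH$-extension operators constructed in \cite{demengel1984fonctions,demengel1989compactness}, or exhausts $U$ by smooth subdomains $U_k\uparrow U$, runs the argument on each $U_k$, and diagonalises (using $|D^2u_j|(U_k)\le C$ uniformly to pass to the full limit). This is exactly the framework in which Theorem~\ref{thm:BHcompact} is established in \cite{demengel1989compactness}; apart from it, the proof is routine.
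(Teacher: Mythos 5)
Your argument is correct, and it is essentially the standard proof: the paper does not prove Theorem \ref{thm:BHcompact} at all but quotes it as a weakened form of Theorem 1.3 in \cite{demengel1989compactness}, so your reduction to $BV$ compactness applied to $u_j$ and to $\nabla u_j$, followed by the identification $\nabla u = w$ via integration by parts, is a perfectly adequate self-contained substitute and matches how such results are proved in the cited reference. The bounds you extract from the $BH$ norm, the passage $D^2u_j=D(\nabla u_j)\wsto Dw$ (strong $L^1$ convergence plus a uniform total-variation bound), and the conclusion that the limit satisfies the paper's definition of weak-$*$ convergence in $BH$ (strong $W^{1,1}(U)$ convergence together with weak-$*$ convergence of the Hessian measures) are all in order.

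One caveat on your final remark: of your two fallback options for irregular $\partial U$, only the extension-operator route actually delivers the statement. Exhausting $U$ by smooth subdomains and diagonalising yields only $L^1_{\loc}$ (hence $W^{1,1}_{\loc}$) convergence, whereas the paper's notion of weak-$*$ convergence in $BH(U)$ requires strong convergence in $W^{1,1}(U)$ on the whole domain; indeed, for a completely arbitrary bounded open $U$ (say, infinitely many disjoint shrinking components) the global compact embedding fails and the theorem as literally stated cannot hold, which is why the cited result carries boundary hypotheses. This does not affect the paper: every application is to $\Omega$ satisfying Definition \ref{def:H1}, which is an extension domain, so your main argument goes through verbatim there.
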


In two dimensions, functions in $BH$ are continuous:
\begin{theorem}[\cite{demengel1984fonctions}, Theorem 3.3]
\label{thm:BHcont}
Let $U\subset\R^2$ with $C^2$ boundary. Then
\[
BH(U)\subset C^0(U)\,.
\]
\end{theorem}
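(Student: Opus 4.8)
Since the assertion is local in the interior, and the $C^2$-regularity of $\partial U$ is precisely what makes a $BH$ function extendable across the boundary (cf. the extension results of \cite{demengel1984fonctions,demengel1989compactness}), it suffices to prove the following a priori version: if $u\in BH(\R^2)$ has compact support, then $u$ has a representative in $C^0(\R^2)$, with a bound on its sup norm in terms of $|D^2u|(\R^2)$. For the original statement one applies this to $\phi u$ with cut-off functions $\phi\in C^\infty_c(U)$, noting that $\phi u\in BH(\R^2)$ because
\[
D^2(\phi u)=\phi\, D^2u + 2\,(\nabla\phi\otimes\nabla u)_{\sym} + u\, D^2\phi\,,
\]
where the last two terms lie in $L^1$ (here $\nabla u\in BV(U;\R^2)\subset L^1_{\loc}(U;\R^2)$); continuity up to $\partial U$ then follows by applying the a priori statement to an extension of $u$ to $BH(\R^2)$.

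First I would regularize, setting $u_\e=u*\rho_\e$ with a standard mollifier, and extract from the $u_\e$ two uniform estimates. \emph{(1) Uniform sup bound.} Since $u_\e$ is smooth with compact support, $u_\e(x_1,x_2)=\int_{-\infty}^{x_1}\!\int_{-\infty}^{x_2}\partial_1\partial_2u_\e\,\d s\,\d t$, hence $\|u_\e\|_{C^0}\le\|\partial_1\partial_2u_\e\|_{L^1(\R^2)}\le |D^2u_\e|(\R^2)\le|D^2u|(\R^2)$, the last step because $|D^2(u*\rho_\e)|\le|D^2u|*\rho_\e$ as measures. \emph{(2) Uniform modulus of continuity.} From the same identity,
\[
|u_\e(x+he_1)-u_\e(x)| = \Big|\int_{-\infty}^{x_2}\!\int_{x_1}^{x_1+h}\partial_1\partial_2u_\e\,\d s\,\d t\Big| \le \big|\partial_1\partial_2u_\e\big|\big((x_1,x_1+h)\times\R\big)\,,
\]
and symmetrically for increments in the $e_2$ direction. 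So equicontinuity of $\{u_\e\}$ reduces to showing that the mixed second derivative puts uniformly small mass on thin vertical and horizontal strips.

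The key structural point is that the measure $\partial_1\partial_2u$ — the off-diagonal entry of the $\Rsym$-valued measure $D^2u$ — charges \emph{no} line parallel to a coordinate axis. Indeed, by Alberti's rank-one theorem (Theorem \ref{thm:alberti}), $|D^s\nabla u|$-a.e.\ the density of $D^s\nabla u$ is of the form $a\otimes\nu$; since $D^2u$ is symmetric this matrix must be symmetric, hence proportional to $\nu\otimes\nu$, and on a line with normal $\nu=e_1$ (resp.\ $e_2$) the $(1,2)$-entry of $\nu\otimes\nu$ vanishes. Therefore the monotone bounded function $c\mapsto|\partial_1\partial_2u|\big((-\infty,c)\times\R\big)$ has no jumps, so it is uniformly continuous on compact intervals; setting $\omega(\delta):=\sup_c|\partial_1\partial_2u|\big((c-\delta,c+\delta)\times\R\big)$, we have $\omega(\delta)\to0$ as $\delta\to0$. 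Using $|\partial_1\partial_2u_\e|\,\L^2\le|\partial_1\partial_2u|*\rho_\e$ as measures, the right-hand side of the displayed increment estimate is, for $\e<h$, at most $|\partial_1\partial_2u|\big((x_1-\e,x_1+h+\e)\times\R\big)\le\omega(2h)$, uniformly in $\e$, and likewise in the $e_2$ direction. Thus $\{u_\e\}$ is uniformly bounded and equicontinuous; by Arzel\`a--Ascoli a subsequence converges in $C^0_{\loc}(\R^2)$ to some $\tilde u\in C^0(\R^2)$, and since also $u_\e\to u$ in $L^1$ we get $\tilde u=u$ a.e.

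The only genuinely nontrivial step — and the main obstacle — is this last structural fact that $\partial_1\partial_2u$ does not concentrate on axis-parallel lines: without the symmetry of the Hessian it is false (e.g.\ $\mathbf{1}_{\{x_1>0\}}\mathbf{1}_{\{x_2>0\}}$ has $\partial_1\partial_2=\delta_0$ yet is discontinuous), so the argument must genuinely use that $D^2u$ takes values in $\Rsym$, not merely that the mixed partial is a measure. The remaining ingredients — the quadrant representation for smooth compactly supported functions, the total-variation bound for mollifications, and the localization/extension step — are routine.
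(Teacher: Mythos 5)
The paper does not prove this statement at all --- it is imported verbatim from Demengel (Theorem 3.3 of the cited reference) --- so your argument is a self-contained replacement rather than a variant of an in-paper proof. Its architecture (localization by cut-offs plus extension across the $C^2$ boundary, mollification, the quadrant identity $u_\e(x)=\int_{-\infty}^{x_1}\int_{-\infty}^{x_2}\partial_1\partial_2u_\e$, and equicontinuity from smallness of the mass of $\partial_1\partial_2 u$ on thin axis-parallel strips) is sound, and you correctly identify where symmetry of $D^2u$ must enter, as your example $\mathbf{1}_{\{x_1>0\}}\mathbf{1}_{\{x_2>0\}}$ shows.

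The one step that is not justified as written is the structural claim itself. Alberti's rank-one theorem tells you that the polar of $D^s\nabla u$ is of the form $a\otimes\nu$ at $|D^s\nabla u|$-a.e.\ point, but it says nothing about what $\nu$ is; in particular it does not identify $\nu$ with the normal $e_1$ (resp.\ $e_2$) of the axis-parallel line you are restricting to, which is exactly what your computation of the $(1,2)$-entry uses. The correct ingredient is the Federer--Vol'pert structure of $BV$ derivatives: for $v=\nabla u\in BV(\R^2;\R^2)$, the Cantor part of $Dv$ vanishes on every set of finite $\H^1$-measure and the absolutely continuous part charges no line, so $Dv\llcorner L$ is pure jump part, equal to $(v^+-v^-)\otimes\nu_{J}\,\H^1\llcorner(J_v\cap L)$; and since $J_v\cap L$ has positive $\H^1$-measure inside two rectifiable sets, $\nu_J=\pm e_1$ $\H^1$-a.e.\ on a vertical line $L$ (resp.\ $\pm e_2$ on a horizontal one). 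With this in hand your argument closes: on a vertical line the $(1,2)$-entry $(v^+-v^-)_1(e_1)_2$ vanishes outright, and on a horizontal line symmetry of $D^2u$ (equivalently $(D^2u)_{12}=(D^2u)_{21}$, or the fact that a symmetric rank-one matrix is $\propto\nu\otimes\nu$) kills it; so the projection of $|\partial_1\partial_2u|$ onto either axis has no atoms and your modulus $\omega(\delta)\to0$. Note that once you use the jump-part structure, Alberti's theorem is not needed at all --- symmetry plus Vol'pert suffices. So: replace the appeal to Theorem \ref{thm:alberti} by the restriction/jump-part argument (or cite it alongside the statement that the singular mass on an $\H^1$-rectifiable set is jump part with $\nu$ equal to its normal), and the proof is complete; the remaining steps (mollification estimates $|D^2(u*\rho_\e)|\le|D^2u|*\rho_\e$, the product rule for $\phi u$, and the $BH$ extension across a $C^2$ boundary from the cited references) are fine.
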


\subsection{Relaxation of integral functionals that depend on higher
  derivatives}
In the proof of the upper bound of Proposition \ref{prop:gammaairy}, we will need a relaxation result for  integral functionals that depend on second gradients.
This is the case $k=p=2$ in Theorem \ref{thm:relax2} below. 

\medskip

A
function $f:\R^{m\times n^k}\to \R$ is called $k$-quasiconvex if

\begin{equation}
f(\xi)=\inf\left\{\int_{[-1/2,1/2]^n}f(\xi+\nabla^k \varphi)\d x:\varphi\in
  W^{k,\infty}_0([-1/2,1/2]^n;\R^m)\right\}\,,\label{eq:23}
\end{equation}
see \cite{MR0188838}.  
The so-called  $k$-quasiconvexification of $f:\R^{m\times n^k}\to \R$ is given by
the right hand side above,
\[
Q_kf(\xi)=\inf\left\{ \int_{[-1/2,1/2]^n}
f(\xi+\nabla^k\varphi)\d x:\,\varphi\in W^{k,\infty}_0([-1/2,1/2]^n;\R^m)\right\}\,.
\]
As is well known, in the case $k=1$, one obtains the relaxation of integral
functionals $u\mapsto\int f(\nabla u)\d x$ by replacing $f$ by its
1-quasiconvexification $Q_1f$. Concerning higher $k$, there exist some relaxation results in the literature, but we could not find any
theorems that fit our situation. In particular, Theorem 1.3 in
\cite{braides2000quasiconvexity} deals with the case of the relaxation of
Caratheodory functions. In our case, where the integrand only depends on the
second gradient, this means that continuity of the integrand  $f(\xi)$
with respect to $\xi$ is required. We are interested in the non-continuous case
$f=F_\lambda$, so we cannot use this theorem. The following theorem suits our
purpose, and we prove it in the appendix:

\begin{theorem}
\label{thm:relax2}
  Let $1\leq p<\infty$, and let $f:\R^{m\times n^2}\to\R$ such that
\[
0\leq f(A)\leq C (1+|A|^p)\quad\text{ for all } A\in \R^{m\times n^2}\,.
\]
Let $\Omega\subset \R^2$ satisfy Definition \ref{def:H1}, and let
$u_0\in W^{2,p}(\Omega)$. Let $u\in u_0+W^{2,p}_0(\Omega;\R^m)$ and $\e>0$. Then there
exists $v\in u_0+W^{2,p}_0(\Omega;\R^m)$ with
\[
\begin{split}
  \|u-v\|_{W^{1,p}(\Omega;\R^m)}&<\e\\
  \int_\Omega f(\nabla^2 v)\d x&< \int_\Omega Q_2f(\nabla^2 u)\d x+\e\,.
\end{split}
\]
\end{theorem}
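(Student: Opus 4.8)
The plan is to establish Theorem \ref{thm:relax2} as an approximation statement in the style of the classical (first-order) relaxation theorem, but adapted to second gradients and to the non-continuous growth hypothesis on $f$. The first reduction I would make is to pass from $Q_2 f$ back to $f$ via a diagonalization: by the very definition of $Q_2 f$, for each point (or each cube in a fine partition of $\Omega$) one can find a test function $\varphi\in W^{2,\infty}_0$ realizing $f(\xi+\nabla^2\varphi)$ close to $Q_2 f(\xi)$ on average. The technical heart is to glue these local test functions to $u$ without destroying the $W^{2,p}_0$ boundary condition and without the $W^{1,p}$-distance $\|u-v\|_{W^{1,p}}$ exploding. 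Because we are working with second derivatives, the gluing is more delicate than in the first-order case: cutting off a perturbation $\varphi$ near the boundary of a subcube produces terms involving $\nabla\varphi$ and $\varphi$ multiplied by derivatives of the cutoff, and these must be controlled. The standard device here is to choose the subcubes small and to use the oscillation/smallness of $\varphi$ (which one may assume, after rescaling, to be small in $W^{1,\infty}$ relative to the cube size) so that the cutoff errors are of lower order; the $p$-growth bound $f(A)\le C(1+|A|^p)$ is exactly what is needed to absorb the resulting $L^p$-errors in $\nabla^2 v$ into the $\e$ on the right-hand side.

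Concretely, the steps I would carry out are: (1) Reduce to the case where $u$ itself is smooth — say $u\in u_0+C^\infty_c(\Omega;\R^m)$ — using density of smooth functions in $W^{2,p}_0$ and the $p$-growth bound to control $\int_\Omega Q_2 f(\nabla^2 u)\,dx$ (note $Q_2 f$ inherits the same $p$-growth, and one must check $Q_2 f$ is at least $p$-equi-integrable-friendly; lower semicontinuity of $u\mapsto \int Q_2 f(\nabla^2 u)$ on $W^{2,p}$ is what makes $Q_2 f$ the right object and is itself a consequence of $2$-quasiconvexity of $Q_2 f$). (2) Partition $\Omega$ into small cubes $Q_i$ of side $\delta$ (discarding a thin boundary layer near $\partial\Omega$, whose contribution is controlled using $u_0\in W^{2,p}$ and the regularity of $\partial\Omega$ from Definition \ref{def:H1}); on each $Q_i$, freeze the second gradient at the value $\xi_i:=\nabla^2 u(x_i^0)$ and pick $\varphi_i\in W^{2,\infty}_0(Q_i;\R^m)$, periodically extended and rescaled to live on $Q_i$, with $\fint_{Q_i} f(\xi_i+\nabla^2\varphi_i)\le Q_2 f(\xi_i)+\e/|\Omega|$. (3) Define $v=u+\sum_i \theta_i\varphi_i$ where $\theta_i$ is a cutoff supported in $Q_i$; alternatively, and more cleanly, one can oscillate $\varphi_i$ at a small scale $\eta\ll\delta$ inside $Q_i$ so that a single cutoff near $\partial Q_i$ has negligible effect, which is the usual trick to avoid the cutoff-error problem entirely. (4) Estimate: $\nabla^2 v$ differs from $\xi_i+\nabla^2\varphi_i$ on $Q_i$ by the (small, by continuity of $\nabla^2 u$) difference $\nabla^2 u - \xi_i$ plus cutoff terms; using $p$-growth and the $\e/|\Omega|$ choices, $\int_\Omega f(\nabla^2 v)\le \int_\Omega Q_2 f(\nabla^2 u)+C\e$, and $\|u-v\|_{W^{1,p}}\le \|\sum_i\theta_i\varphi_i\|_{W^{1,p}}$ is small because each $\varphi_i$ is $O(\eta)$ in $W^{1,\infty}$.

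The main obstacle I anticipate is exactly the gluing/cutoff step in the presence of second derivatives: controlling $\|v - (\xi_i + \nabla^2\varphi_i)\|$ near the interfaces $\partial Q_i$, since differentiating a product $\theta_i\varphi_i$ twice brings in $\nabla^2\theta_i\,\varphi_i$, which scales like $\delta^{-2}\|\varphi_i\|_{L^\infty}$. Resolving this requires either (a) the two-scale construction (oscillate $\varphi_i$ at scale $\eta$, so $\|\varphi_i\|_{L^\infty}=O(\eta)$ while keeping $\nabla^2\varphi_i$ of order $1$, and then take $\eta/\delta\to 0$), or (b) a careful De Giorgi-type cutoff between dyadic shells so the bad term telescopes and can be absorbed. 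The second subtlety is that $f$ is not assumed continuous, so one cannot freely replace $f(\nabla^2 v)$ by $f(\xi_i+\nabla^2\varphi_i)+$(small); this is handled by noting that in the two-scale construction $\nabla^2 v$ equals $\nabla^2\varphi_i$ exactly (not approximately) on most of $Q_i$, with the exceptional set of measure $O(\eta/\delta)|Q_i|$ on which one uses only the upper growth bound $C(1+|\nabla^2 v|^p)$ — and one arranges $\|\nabla^2 v\|_{L^p}$ to stay bounded there. Boundary compatibility with $u_0+W^{2,p}_0(\Omega;\R^m)$ is automatic since every $\varphi_i\in W^{2,\infty}_0(Q_i)$ and the discarded boundary layer is handled by keeping $v=u$ there.
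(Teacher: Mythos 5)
Your overall scaffolding (fine cubes, frozen Hessian $\xi_i$, rescaled periodic $W^{2,\infty}_0$ test functions realizing the infimum in $Q_2f$, growth bound on an exceptional set) matches the general strategy, but there is a genuine gap at exactly the point you flag as the ``second subtlety''. In your construction $v=u+\sum_i\theta_i\varphi_i$ (or its two-scale variant), on the region where the cutoff equals one you have $\nabla^2 v=\nabla^2 u+\nabla^2\varphi_i$, not $\xi_i+\nabla^2\varphi_i$: the background Hessian is only approximately, never exactly, equal to the frozen value $\xi_i$. Your proposed fix --- that ``$\nabla^2 v$ equals $\nabla^2\varphi_i$ exactly on most of $Q_i$'' --- is false for this construction, and the smallness of $\nabla^2 u-\xi_i$ buys you nothing precisely because $f$ is not assumed continuous. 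For the integrand of interest, $f=F_\lambda$, an almost optimal test function $\tilde\xi_i$ typically satisfies $\xi_i+\nabla^2\tilde\xi_i=0$ on a fixed positive fraction of the unit cell; adding the small but nonzero perturbation $\nabla^2 u-\xi_i$ there makes $F_\lambda$ jump from $0$ to about $\lambda$, so the error per cube is of order $\lambda\,\L^2(Q_i)$ and does not disappear as the cubes shrink. This is exactly why the Carath\'eodory-integrand relaxation results cannot be invoked here, as noted before the statement of Theorem \ref{thm:relax2}.

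What is missing is the step to which the paper devotes most of the appendix: before inserting the periodic perturbations, replace $u$ by a function $w\in u_0+W^{2,p}_0(\Omega)$ whose Hessian is \emph{exactly} constant on each cube of a disjoint family $\Omega_w=\cup_iQ_i$ covering $\Omega$ up to a set on which $\int(1+|\nabla^2u|^p+|\nabla^2w|^p)\,\d x$ is small, with $\|u-w\|_{W^{2,p}(\Omega)}$ small (Lemma \ref{lem:approx}). This is nontrivial because the piecewise degree-two Taylor polynomials of (a mollification of) $u$ do not match across cube faces; the paper handles it by a boundary-preserving mollification (Lemma \ref{lem:prepapprox}), Taylor polynomials on cubes separated by gaps of width $h^{4/3}$, and interpolation between them via Whitney's extension theorem in the class $\mathrm{Lip}^{(1,1)}$ (Theorem \ref{thm:stein}), so that the transition region has small measure and a $W^{2,\infty}$-controlled Hessian which the growth bound absorbs. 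Once $\nabla^2 w$ is exactly constant on $Q_i$, the rescaled periodic perturbation gives $\nabla^2 v=\nabla^2 w(x_i)+\nabla^2\xi_i$ exactly there (no cutoff is needed, since the periodic extension of a $W^{2,\infty}_0$ cell function already vanishes to first order on cell boundaries, which also removes your $\nabla^2\theta_i\,\varphi_i$ worry), and the discontinuity of $f$ is harmless. Without this piecewise-polynomial approximation, or some substitute achieving exact constancy of the background Hessian, your argument proves the theorem only for continuous $f$, which is not the case needed for $F_\lambda$; the remaining ingredients of your plan (reduction to smooth $u$, which is legitimate since $Q_2f$ is $2$-quasiconvex with $p$-growth and hence continuous, and the treatment of the uncovered set by the growth bound) are consistent with the paper's proof.
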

\begin{remark}
The theorem can be straightforwardly generalized to the case of higher
derivatives, if in Definition \ref{def:H1} one
replaces $C^2$-regularity with $C^k$-regularity of $\partial \Omega$ in the appropriate
sense. Another straightforward generalization is the case of general dimension
$n$ of the domain $\Omega$. Moreover, (simple)  connectedness and boundedness of
$\Omega$ are  not
necessary here.
\end{remark}

We will need to
determine  the 2-quasiconvexification of $F_\lambda:\R^{2\times 2}\to \R$. In
principle this is contained in \cite{MR820342,allaire1993optimal}. However we could not
find a clear statement  in the literature, so we give a proof of the
following theorem in the appendix.
\begin{theorem}
\label{thm:Q2F}
  We have
\[
Q_2F_\lambda(\sigma)=\begin{cases}2\sqrt{\lambda} \rho^0(\sigma)-2|\det \sigma| & \text{ if }\rho^0(\sigma)\leq
  \sqrt{\lambda}\\
 |\sigma|^2+\lambda & \text{ else.}\end{cases}
\]
\end{theorem}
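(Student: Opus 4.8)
The plan is to compute $Q_2F_\lambda$ in two stages: first identify a natural candidate for the envelope, then verify that it is indeed $2$-quasiconvex and that it is the largest such function below $F_\lambda$. Since $F_\lambda$ depends only on the $2\times 2$ matrix $\sigma$ (which, in the Airy-potential picture, plays the role of $\nabla^2 u$ for a scalar potential $u$, i.e. $m=1$, $n=2$), the relevant notion of $2$-quasiconvexity on $\R^{2\times 2}_{\sym}$ reduces — by a theorem going back to the single-well / double-well analysis in \cite{MR820342,allaire1993optimal} — to rank-one convexity along Hessians, which for scalar $u$ means convexity along directions $\nu\otimes\nu$. The key structural point is that $F_\lambda$ is isotropic (a function of the eigenvalues $\lambda_1,\lambda_2$ of $\sigma$ alone, symmetric under permutation), so its $2$-quasiconvexification should also be isotropic, and one expects the formula in terms of $\rho^0(\sigma)=|\lambda_1|+|\lambda_2|$ and $\det\sigma=\lambda_1\lambda_2$ as stated. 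The candidate
\[
g(\sigma)=\begin{cases}2\sqrt{\lambda}\,\rho^0(\sigma)-2|\det\sigma| & \text{ if }\rho^0(\sigma)\leq\sqrt{\lambda}\\ |\sigma|^2+\lambda & \text{ else}\end{cases}
\]
is continuous across the interface $\rho^0(\sigma)=\sqrt\lambda$ (where both branches equal $|\sigma|^2+\lambda$ when $\lambda_1,\lambda_2$ have the same sign, using $|\sigma|^2=\rho^0(\sigma)^2-2\det\sigma$), and it plainly satisfies $g\le F_\lambda$: on the region $\rho^0\le\sqrt\lambda$ one has $2\sqrt\lambda\,\rho^0-2|\det\sigma|\le \rho^0{}^2+\lambda-2|\det\sigma|\le|\sigma|^2+\lambda$ by AM–GM, and at $\sigma=0$ both sides vanish.

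First I would establish the lower bound $Q_2F_\lambda\ge g$. For this it suffices to show $g$ is $2$-quasiconvex, since $Q_2F_\lambda$ is the largest $2$-quasiconvex function $\le F_\lambda$. On the open set $\{\rho^0>\sqrt\lambda\}$, $g$ is a smooth convex function of $\sigma$ plus the affine-hessian-null term $\lambda$, hence rank-one convex there; on $\{\rho^0<\sqrt\lambda\}$, $g(\sigma)=2\sqrt\lambda\rho^0(\sigma)-2|\det\sigma|$ and one checks it is rank-one convex by computing the second directional derivative along $\nu\otimes\nu$ — the point being that $\rho^0$ is convex and the quadratic form $\sigma\mapsto\det\sigma$ restricted to the rank-one line $t\mapsto\sigma_0+t\,\nu\otimes\nu$ is \emph{affine} in $t$, so $-2|\det\sigma|$ is convex along every rank-one line and combines with $2\sqrt\lambda\rho^0$ to give a rank-one convex function. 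Across the interface one verifies $g$ has no concave kink: the one-sided derivatives match because of the continuity computation above and because both branches are built from the same isotropic invariants. Since a continuous function that is rank-one convex on each piece and has matching (non-concave) derivatives across the $C^0$ interface is rank-one convex, and in the scalar $m=1$ case rank-one convexity is equivalent to quasiconvexity (here $2$-quasiconvexity), we get $g$ is $2$-quasiconvex, hence $g\le Q_2F_\lambda$.

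For the matching upper bound $Q_2F_\lambda\le g$, I would exhibit, for each fixed $\sigma$ with $\rho^0(\sigma)\le\sqrt\lambda$, an explicit test field: a laminate-type construction. Diagonalize $\sigma=\diag(\lambda_1,\lambda_2)$; one wants a periodic $W^{2,\infty}_0$ perturbation $\varphi$ of the quadratic $\frac12\sigma:x\otimes x$ such that $\nabla^2(\ldots)$ takes only values in the "void" set $\{0\}$ and a set where $F_\lambda$ is small, with volume fractions tuned so that the average of $\nabla^2\varphi$ vanishes and the average of $F_\lambda(\sigma+\nabla^2\varphi)$ hits $g(\sigma)$. Concretely this is a rank-one (second-order) lamination in one direction to "drill holes," matched to the known optimal microstructures of Kohn–Strang / Allaire–Kohn; the volume fraction $\theta$ devoted to the solid phase is chosen as $\theta=\rho^0(\sigma)/\sqrt\lambda$, which (after optimizing over orientation of the lamination, i.e. over which eigendirection to drill in, or over a double lamination when $\lambda_1,\lambda_2$ have opposite signs) yields the energy $2\sqrt\lambda\rho^0(\sigma)-2|\det\sigma|$. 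I would then invoke the approximation/relaxation machinery (and Theorem \ref{thm:relax2}) to pass from such periodic constructions to the definition of $Q_2$. I expect the main obstacle to be precisely this upper-bound construction: getting the exact optimal microgeometry — in particular the correct handling of the mixed-sign case $\lambda_1\lambda_2<0$, which requires a rank-two lamination rather than a simple layering, and verifying that the second-gradient (Hessian) compatibility conditions for such laminations are satisfied (they are, for scalar potentials, exactly the rank-one condition on $\nabla^2$, which is automatic for $\nu\otimes\nu$ jumps). The cleanest route is likely to reduce to the known scalar results of \cite{MR820342} via the identification of $\nabla^2 u$ with a gradient field in disguise, citing their explicit formulas rather than reconstructing the laminates from scratch, and then checking the arithmetic that their envelope, rewritten in the $(\rho^0,\det)$ variables, is exactly $g$.
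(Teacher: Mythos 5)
Your upper-bound plan (realize the Kohn--Strang microstructure by laminates whose jumps are Hessian-compatible, i.e.\ in directions $\nu\otimes\nu$, with solid volume fraction $\rho^0(\sigma)/\sqrt\lambda$) is in the same spirit as the paper, which carries this out via the symmetric rank-one convex envelope $\Rs F_\lambda$: Lemmas \ref{lem:auxconst1} and \ref{lem:auxconst2} construct the $W^{2,\infty}$ laminates with prescribed piecewise constant Hessians, Lemma \ref{lem:RsymQ2} gives $Q_2F_\lambda\leq \Rs F_\lambda$, and Lemma \ref{lem:RsGl} computes $\Rs F_\lambda\leq \bar G_\lambda$ by an explicit two-step lamination with optimized volume fractions. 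You leave this part as a sketch, but the route is the right one.

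The genuine gap is in your lower bound. You argue that the candidate $g$ is symmetric rank one convex and then assert that ``in the scalar $m=1$ case rank-one convexity is equivalent to quasiconvexity (here $2$-quasiconvexity)''. That equivalence is the classical fact only for \emph{first}-order scalar problems, where functions of $\nabla u\in\R^n$ are involved; here the variable is the Hessian, and $2$-quasiconvexity of a function on $\Rsym$ is equivalent (via $v=\nabla u$) to quasiconvexity tested with symmetric-matrix-valued gradients of maps $\R^2\to\R^2$, i.e.\ a genuinely vectorial constrained problem. Only the implication ``$2$-quasiconvex $\Rightarrow$ symmetric rank one convex'' is elementary (this is the content of Lemma \ref{lem:RsymQ2}); the converse is neither known nor used in the paper, so your argument does not deliver $g\leq Q_2F_\lambda$. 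Moreover, your verification of rank-one convexity of the branch $2\sqrt\lambda\,\rho^0(\sigma)-2|\det\sigma|$ is itself incorrect as stated: along a line $t\mapsto\sigma_0+t\,\nu\otimes\nu$ the determinant is affine in $t$, hence $|\det\sigma(t)|$ is \emph{convex} and $-2|\det\sigma(t)|$ is \emph{concave} there, so the claimed termwise convexity fails and the rank-one convexity of the combination requires the more delicate Kohn--Strang analysis. The paper sidesteps both issues: since every test field $\nabla^2\varphi$ for $Q_2$ is in particular a gradient $\nabla(\nabla\varphi)$ admissible for $Q_1$, one has $Q_1F_\lambda\leq Q_2F_\lambda$, and the known identity $Q_1F_1=\bar G_1$ (Theorem 6.28 in \cite{MR2361288}, i.e.\ the Kohn--Strang envelope) together with the scaling $F_\lambda=\lambda F_1(\cdot/\sqrt\lambda)$ gives $\bar G_\lambda\leq Q_2F_\lambda$ at once. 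Replacing your quasiconvexity claim for $g$ by this comparison would repair the proposal.
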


\subsection{Trace and extension operators}
\label{sec:trace-operators}
Recall that we assume that  $\Omega\subset\R^2$ satisfies Definition \ref{def:H1}. The trace operator
\[
\gamma_0:u\mapsto u|_{\partial\Omega}
\]
is linear surjective as a map $W^{1,1}(\Omega)\to L^1(\Omega)$ (see
\cite{gagliardo1957caratterizzazioni}) and also as a map $BV(\Omega)\to
L^1(\Omega)$ (see \cite{miranda1967comportamento}). For the spaces
$W^{2,1}(\Omega)$ and $BH(\Omega)$, it also makes sense to consider the operator
\[
\gamma_1:u\mapsto  \nabla u|_{\partial\Omega}\cdot n\,.
\]
The following theorem combines statements from  Chapter 1.8 of
\cite{lions2012non} and  Chapter 2 as well as the appendix of \cite{demengel1984fonctions}.
\begin{theorem}
  \label{thm:traceop}
  \begin{itemize}
  \item[(i)] The operator 
$(\gamma_0,\gamma_1)$ is linear surjective  both as a map 
\[
H^{2}(\Omega)\to H^{3/2}(\partial\Omega)\times H^{1/2}(\partial\Omega)
\]
and as a map
\[
BH(\Omega)\to \gamma_0(W^{2,1}(\Omega))\times L^1(\partial\Omega)\,.
\]
\item[(ii)] There exist  continuous right inverses $(\gamma_0,\gamma_1)^{-1}$,
  defined as maps
\[
H^{3/2}(\partial\Omega)\times H^{1/2}(\partial\Omega)\to H^{2}(\Omega)
\]
and 
\[
\gamma_0(W^{2,1}(\Omega))\times L^1(\partial\Omega)\to W^{2,1}(\Omega)\,.
\]
  \end{itemize}

\end{theorem}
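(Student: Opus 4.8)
The plan is to prove both parts of the theorem locally, near each point of $\partial\Omega$, and then to assemble the local statements with a partition of unity. There are two local models: a piece of $C^2$ boundary, away from the corner points $x_1,\dots,x_N$, which is covered directly by the cited literature; and a neighbourhood of a corner $x_i$, which one flattens with the $C^2$-diffeomorphism $\varphi_i$ of Definition \ref{def:H1}(ii) and reduces to a model quadrant.

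Near a point $x_0\in\partial\Omega$ with $x_0\notin\{x_1,\dots,x_N\}$, Definition \ref{def:H1}(iii) allows one to straighten $\partial\Omega$ by a $C^2$ change of variables to the flat half-space $\{y_2>0\}$ --- a transformation under which all the spaces occurring in the statement transform to themselves with equivalent norms, precisely because $\partial\Omega$ is $C^2$ there. For $H^2$, the surjectivity of $(\gamma_0,\gamma_1)$ onto $H^{3/2}\times H^{1/2}$ of the flat boundary, together with a continuous linear right inverse, is the classical half-space trace theorem \cite[Ch.~1.8]{lions2012non}. For $W^{2,1}$, applying Gagliardo's trace theorem \cite{gagliardo1957caratterizzazioni} to $u$ and to the components of $\nabla u$ shows that over a flat piece one may prescribe $\gamma_0 u$ in $\gamma_0(W^{2,1}(\Omega))$ and $\gamma_1 u$ in $L^1$ independently, with a bounded lift into $W^{2,1}$. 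For $BH$, the corresponding trace surjectivity and the existence of a bounded extension operator --- one that can be taken to have values in $W^{2,1}(\Omega)\subset BH(\Omega)$ --- are exactly the content of \cite[Ch.~2 and appendix]{demengel1984fonctions}.

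Near a corner $x_i$ one uses $\varphi_i$ to reduce to the case in which $\Omega$ agrees, close to the origin, with the quadrant $Q=\{y_1>0,\,y_2>0\}$ and $\partial\Omega$ with the two coordinate half-axes $e_1,e_2$. For $Q$ the trace statements --- surjectivity of $(\gamma_0,\gamma_1)$ onto the appropriate product space, with a continuous right inverse --- again reduce, by the trace theory for corner domains, to the flat half-space results used above; for $H^2$ this is \cite[Ch.~1.8]{lions2012non}, and for $W^{2,1}$ and $BH$ one combines \cite{gagliardo1957caratterizzazioni} and \cite{demengel1984fonctions}. What has to be identified here is the compatibility condition that the data on the two arcs $e_1,e_2$ must satisfy at the corner; this is what fixes the precise meaning of the target space $H^{3/2}(\partial\Omega)$ (respectively $\gamma_0(W^{2,1}(\Omega))$) near $x_i$, and one checks that with this definition the trace operator is onto it. An explicit right inverse on $Q$ can be obtained by first lifting the data on $e_1$ to a function $w_1\in H^2(\{y_2>0\})$ by the flat construction, and then lifting the corrected data on $e_2$ --- the prescribed data minus the traces of $w_1$, which now satisfy a homogeneous condition at the corner --- in $\{y_1>0\}$ and cutting off appropriately; but the bookkeeping at the corner (in particular the behaviour in the borderline space $H^{1/2}$) is delicate, which is why one prefers to quote the corner-domain trace theory.

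Finally, to patch: cover $\partial\Omega$ by finitely many of the charts above, take a subordinate smooth partition of unity $\{\theta_k\}$ on a collar of $\partial\Omega$ in $\overline\Omega$, apply the $k$-th local right inverse to the boundary data restricted to the $k$-th chart, multiply the resulting lift by $\theta_k$, extend by zero, and sum to obtain $v$. From the Leibniz rule $\gamma_1(\theta_k w)=\theta_k\gamma_1 w+(\partial_n\theta_k)\gamma_0 w$ together with $\sum_k\theta_k\equiv 1$ and $\sum_k\partial_n\theta_k\equiv 0$ on $\partial\Omega$ one reads off that $(\gamma_0,\gamma_1)v$ equals the prescribed data, and $v$ depends linearly and continuously on it; this gives the surjectivity and the continuous right inverse in both parts. (For the Hilbert-space statement in (i) one may instead argue abstractly: once $(\gamma_0,\gamma_1)$ is bounded and surjective it has closed range, and its restriction to the orthogonal complement of its kernel is a linear homeomorphism onto the target, which is the desired right inverse.) The main obstacle is the corner analysis of the previous paragraph: identifying the correct compatibility between the two boundary arcs meeting at each $x_i$, verifying that flattening by $\varphi_i$ and multiplying by the cut-offs $\theta_k$ preserves the trace spaces with equivalent norms, and --- for the $BH$ part --- relying on Demengel's trace and extension theory in place of the textbook Sobolev trace theorems.
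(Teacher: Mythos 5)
Your overall architecture --- flatten locally with the $C^2$ charts of Definition \ref{def:H1}, quote the smooth-boundary trace theorems (\cite{lions2012non} for $H^2$, \cite{gagliardo1957caratterizzazioni,demengel1984fonctions} for $W^{2,1}$ and $BH$), and patch with a partition of unity --- is the same as the paper's. The genuine gap is at the corners, which is precisely the only place where the theorem needs an argument beyond the cited literature, and you leave it open: you appeal to an unnamed ``corner-domain trace theory,'' concede that your explicit two-step lift on the quadrant has delicate bookkeeping you do not carry out, and suggest that a compatibility condition between the data on the two arcs should be built into the target space. That last suggestion is in tension with the statement you are proving: for the $BH$ part the second factor is plain $L^1(\partial\Omega)$, with no corner compatibility at all, and surjectivity onto the full product $\gamma_0(W^{2,1}(\Omega))\times L^1(\partial\Omega)$ is exactly what has to be shown.

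The paper closes this hole with two observations you are missing. First, the $\gamma_0$-component is free: by the very definition of $\gamma_0(W^{2,1}(\Omega))$ one may pick $v\in W^{2,1}(\Omega)$ with $\gamma_0(v)=\bar f_1$ and subtract it, so that only the problem ``$\gamma_0=0$, $\gamma_1=$ a given $L^1$ function'' remains. Second --- and this is the decisive point, taken from the explicit Gagliardo-type construction in Proposition 1 of the appendix of \cite{demengel1984fonctions} --- the lift $u\in W^{2,1}([0,1]^2)$ of a datum $g\in L^1([0,1]\times\{0\})$ with $u=0$, $\partial_2 u=g$ on $[0,1]\times\{0\}$ can be seen, directly from the explicit formulas for $u$ and its derivatives, to satisfy $u=\partial_1 u=0$ on $\{0\}\times[0,1]$. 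Hence at a corner (after flattening to the model square via $\varphi_i$) one simply adds the two lifts associated with the two boundary segments; no compatibility condition arises and no borderline-space analysis is needed, after which the partition-of-unity patching you describe goes through unchanged. For the $H^2$ statement the paper does treat the matter as a citation to \cite{lions2012non}, and your hedge about the $H^{1/2}$ behaviour at corners is at the same level of (in)completeness there; but for the $BH$/$W^{2,1}$ part the corner observation above is the substance of the proof and cannot be delegated to an unspecified reference.
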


\begin{remark}
  \begin{itemize}
  \item[(i)] The norm on $\gamma_0(W^{2,1}(\Omega))$ is the one induced by
$\gamma_0$,
\[
\|u\|_{\gamma_0(W^{2,1}(\Omega))}:=\inf\{\|v\|_{W^{2,1}(\Omega)}:\gamma_0
(v)=u\}\,.
\]
Note that $\gamma_0(W^{2,1}(\Omega))\subsetneq W^{1,1}(\partial\Omega)$, see the
appendix of \cite{demengel1984fonctions}. This fact together with the theorem
explains our choice of assumptions on the boundary conditions
in our main theorem; see equations \eqref{eq:25} and \eqref{eq:42} below. 
\item[(ii)] In \cite{demengel1984fonctions}, the statement on  the surjectivity of the trace operator $BH(\Omega)\to
  \gamma_0(W^{2,1}(\Omega))\times L^1(\partial\Omega)$ 
   is only made  for $C^2$-regular boundary. For the sake of brevity, we only
   sketch the changes of that proof  that have to be made to show that
   the claim also holds true for  $\Omega\subset\R^2$
   satisfying Definition \ref{def:H1}. In Proposition 1 of the appendix of
   \cite{demengel1984fonctions}, it is shown that for 
$g\in L^1([0,1]\times \{0\})$, there
   exists $u\in W^{2,1}([0,1]^2)$ such that $u|_{[0,1]\times \{0\}}=0$,
   $\partial_2 u|_{[0,1]\times \{0\}}=g$. The proof works by an explicit
   construction, modifying an idea by Gagliardo. In fact, using the explicit
   formulas for $u$ and its partial derivatives that are given in the proof, one easily deduces that $u_{\{0\}\times[0,1]}=\partial_1
   u|_{\{0\}\times[0,1]}=0$. Hence, one may use the Proposition twice to obtain
   $\tilde u\in W^{2,1}([0,1]^2)$ that vanishes on $\Gamma:=[0,1]\times\{0\}\cup
   \{0\}\times [0,1]$, and whose normal derivative agrees with a given $\tilde
   g\in L^1(\Gamma)$ on $\Gamma$. With this slightly more general version of the
   Proposition, the proof of Theorem 1 in the appendix of
   \cite{demengel1984fonctions}, which states the surjectivity of the trace operator, can be extended to the case where $\Omega$
   satisfies  Definition \ref{def:H1} without additional changes: One
   uses a suitable cover of the boundary $\partial\Omega$ by open sets, an associated partition
   of unity and $C^2$-regular diffeomorphisms that reduce the problem to the
   situation of the proposition.
  \end{itemize}
\end{remark}

\medskip

We have the following Poincar\'e inequality for $BH$:
\begin{lemma}
\label{lem:BHpoincare}  Let $u\in BH(\Omega)$, $\gamma_0(u)=f$, $\gamma_1(u)=g$, then we have
\[
\|u\|_{W^{1,1}(\Omega)}\lesssim
\|f\|_{W^{1,1}(\partial\Omega)}+\|g\|_{L^1(\partial\Omega)}+|D^2 u|(\Omega)\,.
\]
\end{lemma}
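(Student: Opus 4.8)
The plan is to reduce the Poincaré inequality for $BH(\Omega)$ to the already-stated surjectivity-with-bounded-right-inverse property of the trace operator (Theorem~\ref{thm:traceop}) combined with the classical Poincaré inequality for the space $BH_0$ of functions with vanishing Cauchy data. First I would apply Theorem~\ref{thm:traceop}(ii) to the pair $(f,g)=(\gamma_0(u),\gamma_1(u))$: since $u\in BH(\Omega)\subset W^{1,1}(\Omega)$ with $\nabla u\in BV$, the trace $(\gamma_0(u),\gamma_1(u))$ lies in $\gamma_0(W^{2,1}(\Omega))\times L^1(\partial\Omega)$, so there is a continuous right inverse producing $w\in W^{2,1}(\Omega)$ with $\gamma_0(w)=f$, $\gamma_1(w)=g$ and
\[
\|w\|_{W^{2,1}(\Omega)}\lesssim \|f\|_{\gamma_0(W^{2,1}(\Omega))}+\|g\|_{L^1(\partial\Omega)}\lesssim \|f\|_{W^{1,1}(\partial\Omega)}+\|g\|_{L^1(\partial\Omega)}\,,
\]
where the last step uses the (continuous) inclusion $\gamma_0(W^{2,1}(\Omega))\hookrightarrow W^{1,1}(\partial\Omega)$ noted in the remark after Theorem~\ref{thm:traceop}. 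Then $v:=u-w\in BH(\Omega)$ has $\gamma_0(v)=0$ and $\gamma_1(v)=0$.

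The second step is a Poincaré inequality on this subspace: for $v\in BH(\Omega)$ with $\gamma_0(v)=\gamma_1(v)=0$ one has $\|v\|_{W^{1,1}(\Omega)}\lesssim |D^2 v|(\Omega)$. This is standard and can be obtained by a compactness-contradiction argument using Theorem~\ref{thm:BHcompact}: if it failed there would be $v_j$ with $\|v_j\|_{W^{1,1}}=1$, vanishing Cauchy data, and $|D^2 v_j|(\Omega)\to 0$; by compactness a subsequence converges weakly* in $BH$ to some $v$ with $D^2 v=0$, hence $\nabla v$ constant and $v$ affine, while the continuity/closedness of the zero-trace conditions under weak* convergence (here one uses that $\gamma_0$ is continuous on $W^{1,1}\to L^1$ and that $\gamma_1$ is continuous on $BH$ into $L^1(\partial\Omega)$) forces $v\equiv 0$, contradicting $\|v_j\|_{W^{1,1}}\to\|v\|_{W^{1,1}}=1$ (the convergence $v_j\to v$ in $W^{1,1}$ is part of weak* convergence in $BH$). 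Alternatively one invokes directly the Poincaré-type estimates in \cite{demengel1984fonctions}. Combining the two steps:
\[
\|u\|_{W^{1,1}(\Omega)}\leq \|v\|_{W^{1,1}(\Omega)}+\|w\|_{W^{1,1}(\Omega)}\lesssim |D^2 v|(\Omega)+\|w\|_{W^{2,1}(\Omega)}\lesssim |D^2 u|(\Omega)+|D^2 w|(\Omega)+\|f\|_{W^{1,1}(\partial\Omega)}+\|g\|_{L^1(\partial\Omega)}\,,
\]
and since $|D^2 w|(\Omega)\leq \|w\|_{W^{2,1}(\Omega)}\lesssim\|f\|_{W^{1,1}(\partial\Omega)}+\|g\|_{L^1(\partial\Omega)}$, while $|D^2 v|(\Omega)\leq |D^2 u|(\Omega)+|D^2 w|(\Omega)$, all terms are absorbed into the claimed right-hand side.

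The only delicate point is the second step — the homogeneous Poincaré inequality — and specifically making sure the zero Cauchy data survive passage to the weak* limit in $BH$. This is where one needs the trace operators $\gamma_0,\gamma_1$ to behave well: $\gamma_0$ is continuous from $W^{1,1}(\Omega)$ to $L^1(\partial\Omega)$ (so $\gamma_0(v_j)\to\gamma_0(v)$ in $L^1$), and the continuity of $\gamma_1:BH(\Omega)\to L^1(\partial\Omega)$ with respect to weak* convergence is exactly of the type provided by the trace theory of \cite{demengel1984fonctions}. With this in hand the contradiction argument closes. A routine check that the implicit constants depend only on $\Omega$ (through Definition~\ref{def:H1}) completes the proof.
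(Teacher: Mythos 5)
There is a genuine gap in your first step. The continuous right inverse of $(\gamma_0,\gamma_1)$ in Theorem \ref{thm:traceop} is continuous from $\gamma_0(W^{2,1}(\Omega))\times L^1(\partial\Omega)$ to $W^{2,1}(\Omega)$, where $\gamma_0(W^{2,1}(\Omega))$ carries the quotient norm $\|f\|_{\gamma_0(W^{2,1}(\Omega))}=\inf\{\|v\|_{W^{2,1}(\Omega)}:\gamma_0(v)=f\}$. So your lifting $w$ satisfies only $\|w\|_{W^{2,1}(\Omega)}\lesssim \|f\|_{\gamma_0(W^{2,1}(\Omega))}+\|g\|_{L^1(\partial\Omega)}$, and the step where you replace $\|f\|_{\gamma_0(W^{2,1}(\Omega))}$ by $\|f\|_{W^{1,1}(\partial\Omega)}$ uses the inclusion $\gamma_0(W^{2,1}(\Omega))\hookrightarrow W^{1,1}(\partial\Omega)$ in the wrong direction: continuity of that inclusion gives $\|f\|_{W^{1,1}(\partial\Omega)}\lesssim\|f\|_{\gamma_0(W^{2,1}(\Omega))}$, not the reverse. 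The reverse inequality is in fact false: if the two norms were equivalent on $\gamma_0(W^{2,1}(\Omega))$, this subspace (complete in the quotient norm) would be closed in $W^{1,1}(\partial\Omega)$, and being dense it would equal $W^{1,1}(\partial\Omega)$, contradicting the strict inclusion $\gamma_0(W^{2,1}(\Omega))\subsetneq W^{1,1}(\partial\Omega)$ emphasized in the remark after Theorem \ref{thm:traceop}. Hence your lifting argument only yields the weaker estimate with $\|f\|_{\gamma_0(W^{2,1}(\Omega))}$ on the right-hand side, not the stated lemma. A secondary soft spot: $\gamma_1$ is not continuous under weak* convergence in $BH$ (traces never are); in your contradiction argument this is rescued only because $|D^2v_j|(\Omega)\to 0$ forces strict convergence of $\nabla v_j$ in $BV$, under which the trace is continuous — this should be said explicitly.

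For comparison, the paper's proof needs no lifting at all and thereby avoids the quotient-norm issue: it applies the Poincar\'e inequality with boundary term for $BV$ functions directly to $\nabla u\in BV(\Omega;\R^2)$, giving $\|\nabla u\|_{L^1(\Omega)}\lesssim\|\nabla u\|_{L^1(\partial\Omega)}+|D^2u|(\Omega)$, and then the analogous inequality for $u$ itself. Since the boundary trace of $\nabla u$ is $(\partial_\tau f)\tau+g\,n$, its $L^1(\partial\Omega)$ norm is bounded by $\|f\|_{W^{1,1}(\partial\Omega)}+\|g\|_{L^1(\partial\Omega)}$, and the trace of $u$ is $f$; this gives exactly the claimed estimate, with only $L^1$ control of the boundary data needed.
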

\begin{proof}
  By the Poincar\'e inequality for $BV$ functions (see \cite{MR1014685} Chapter
  5) we have
\[
\|\nabla u\|_{L^1(\Omega)}\lesssim \|\nabla u\|_{L^1(\partial\Omega)}+|D^2
u|(\Omega)\,
\]
and 
\[
\|u\|_{L^1(\Omega)}\lesssim \|u\|_{L^1(\partial\Omega)}+\|\nabla
u\|_{L^1(\Omega)}\,.\]
This proves the claim.
\end{proof}



\medskip

Next, we 
quote a general extension result from \cite{stein2016singular}.  In the following, we slightly change a definition from
\cite{stein2016singular}:
\begin{definition}
\label{def:minimal}
Let $U\subset\R^n$ be open and
bounded. We
say that the boundary $\partial U$ is said to satisfy minimal conditions if 
\begin{itemize}
\item[(i)] There exists a cover of $\partial U$ by a finite  number of
  open sets $U_1,U_2,\dots,U_m$
\item[(ii)] For every $i=1,\dots,M$, $\partial U\cap U_i$ can be represented as
  the graph of a Lipschitz function $\tilde U_i\to \R$ with $\tilde
  U_i\subset\R^{n-1}$. 
\end{itemize}
  \end{definition}
Note that if $ \Omega$ satisfies Definition \ref{def:H1}, then $\partial\Omega$ also satisfies the
minimal conditions.
\begin{theorem}[Theorem 5 and 5' in Chapter 6 of \cite{stein2016singular}]
  \label{thm:steinext}
Let $U\subset\R^n$ such that $\partial U$  satisfies the minimal conditions. Then there exists an extension operator
\[
E:L^1(U)\to L^1(\R^n)
\]
that is continuous as a map $W^{k,p}(U)\to W^{k,p}(\R^n)$ for every
$k\in\N$ and every $1\leq p\leq \infty$. Moreover, the norm of this operator
only depends on $n$ and on the maximum of the Lipschitz constants of the
functions  that appear   in Definition \ref{def:minimal} (ii).
\end{theorem}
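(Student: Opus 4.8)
This is the classical extension theorem of Stein \cite{stein2016singular}, and the plan is to reproduce his construction in two stages: first build an extension operator for a \emph{special Lipschitz domain} $\Omega=\{(x',x_n):x_n>\varphi(x')\}$, with $\varphi:\R^{n-1}\to\R$ globally Lipschitz, and then patch such operators together by a partition of unity.

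\textbf{Step 1 (localisation).} Enlarge the cover $U_1,\dots,U_m$ of $\partial U$ from Definition \ref{def:minimal} by an open set $U_0$ with $\overline{U_0}\subset U$ and $U\subset U_0\cup\dots\cup U_m$, and fix a smooth partition of unity $\{\zeta_i\}_{i=0}^m$ subordinate to it, with each $\supp\zeta_i$ contained in a small ball inside $U_i$. Passing to the coordinates in which $\partial U\cap U_i$ is a graph over a coordinate hyperplane — a permutation of the axes, hence an isometry of every $W^{k,p}$ — the set $U$ agrees on $\supp\zeta_i$ with a special Lipschitz domain $\Omega_i$ whose defining function is the McShane extension of the local boundary function, Lipschitz with constant at most $M$ (the maximal constant appearing in Definition \ref{def:minimal}(ii)). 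Then $\zeta_i f$, extended by zero, lies in $W^{k,p}(\Omega_i)$. Granting for each $M$ an extension operator $E_M$ with $\|E_M\|_{W^{k,p}(\Omega_i)\to W^{k,p}(\R^n)}\le C(n,k,M)$, set $Ef:=\zeta_0 f+\sum_{i=1}^m E_M(\zeta_i f)$. Since $E_M$ restricts to the identity on $\Omega_i$ and $\sum_i\zeta_i\equiv1$ on $U$, this $Ef$ restricts to $f$ on $U$, and the Leibniz rule gives $\|Ef\|_{W^{k,p}(\R^n)}\le C(n,k,M)\|f\|_{W^{k,p}(U)}$ with a bound independent of $p$.

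\textbf{Step 2 (the special Lipschitz case).} For $\Omega=\{x_n>\varphi(x')\}$ with $\Lip\varphi\le M$, let $\delta^*\in C^\infty(\R^n\setminus\overline\Omega)$ be a regularised distance (Whitney's construction): $c_1\dist(x,\Omega)\le\delta^*(x)\le c_2\dist(x,\Omega)$ and $|\partial^\alpha\delta^*(x)|\le C_\alpha\,\dist(x,\Omega)^{1-|\alpha|}$ for every $\alpha$. Because $\dist(x,\Omega)\ge(\varphi(x')-x_n)/\sqrt{1+M^2}$ whenever $x_n<\varphi(x')$, one can fix $a=a(M,c_1)$ so that $(x',x_n+ta\,\delta^*(x))\in\Omega$ for all $t\ge1$. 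Choose, by an explicit Fourier-analytic construction, a continuous $\psi:[1,\infty)\to\R$ with $|\psi(t)|\le C_N t^{-N}$ for every $N$, $\int_1^\infty\psi(t)\,\d t=1$ and $\int_1^\infty t^j\psi(t)\,\d t=0$ for all $j\ge1$, and define
\[
E_Mf(x):=\begin{cases}f(x)& x_n\ge\varphi(x'),\\ \displaystyle\int_1^\infty f\bigl(x',\,x_n+ta\,\delta^*(x)\bigr)\,\psi(t)\,\d t& x_n<\varphi(x').\end{cases}
\]

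\textbf{Step 3 (estimates) and the main obstacle.} One checks that $E_Mf$ is smooth below the graph and joins $f$ together with all its derivatives across $\partial\Omega$: the normalisation $\int\psi=1$ matches the boundary values, and the vanishing of the higher moments exactly cancels the boundary contributions of the derivatives of $\delta^*$, yielding $C^k$-matching. The core computation is that, for $|\alpha|\le k$, $\partial^\alpha E_Mf(x)$ can be written (by induction on $|\alpha|$, differentiating under the integral, and using the antiderivatives and moments of $\psi$ to integrate by parts in $t$, so as to trade the non-integrable coefficients $\partial^\gamma\delta^*\cdot(\delta^*)^{-1}$ generated by the chain rule against the Whitney bounds) as a finite sum of integrals $\int_1^\infty(\partial^\beta f)\bigl(x',x_n+ta\delta^*(x)\bigr)\,\eta_{\alpha\beta}(x,t)\,\d t$ with $|\beta|\le|\alpha|$ and kernels $\eta_{\alpha\beta}(x,\cdot)$ bounded in $L^1_t\cap L^\infty_t$ by $C(n,k,M)$. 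Minkowski's integral inequality then gives $\|\partial^\alpha E_Mf\|_{L^p(\{x_n<\varphi(x')\})}\le C(n,k,M)\sum_{|\beta|\le|\alpha|}\|\partial^\beta f\|_{L^p(\Omega)}$ for every $1\le p\le\infty$ — which is the source of the $p$-independence — and summing over $|\alpha|\le k$ finishes Step 2. The technical heart, and the step I expect to be the real obstacle, is exactly this combination: producing one rapidly decreasing kernel $\psi$ with $\int\psi=1$ and \emph{all} higher moments zero, together with the integration-by-parts bookkeeping that makes a \emph{single} operator bounded on $W^{k,p}$ for every $k$ and $p$ at once. A minor point to watch is that the localisation in Step 1 must proceed through coordinate permutations only (available precisely because Definition \ref{def:minimal} presents $\partial U$ locally as a graph over a coordinate hyperplane), since general bi-Lipschitz changes of variables would not preserve $W^{k,p}$ for $k\ge2$.
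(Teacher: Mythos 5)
There is nothing to compare against inside the paper: Theorem \ref{thm:steinext} is quoted verbatim from Stein (Theorems 5 and 5$'$ in Chapter VI of \cite{stein2016singular}) and is not proved here, so the relevant benchmark is Stein's own proof. Your proposal reproduces exactly that proof: reduction to special Lipschitz domains by a finite cover, partition of unity and a rotation/permutation of coordinates, then the extension $\int_1^\infty f\bigl(x',x_n+ta\,\delta^*(x)\bigr)\psi(t)\,\d t$ built from the regularized distance $\delta^*$ and the rapidly decreasing kernel with $\int_1^\infty\psi\,\d t=1$ and all higher moments vanishing. The parts you flag as the ``technical heart'' (the explicit construction of $\psi$, the integration-by-parts bookkeeping that yields kernels bounded in $L^1_t\cap L^\infty_t$ and hence $p$-independent bounds via Minkowski, and the verification that the extension matches $f$ across the graph -- which for general $W^{k,p}$ functions goes through smooth approximation and a weak-derivative matching lemma rather than literal $C^k$ gluing) are precisely where Stein's detailed computations live; as written your Step 3 is a correct plan rather than a complete verification. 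Two minor caveats: the constant produced by your Step 1 also depends on $k$, on the number of sets in the cover and on the chosen cutoffs, not only on $n$ and the maximal Lipschitz constant (the paper's restatement is itself loose on this point; Stein's uniform dependence uses his quantitative ``minimally smooth'' hypotheses, and for the application to the domains $\bar\Omega_\e$ in Lemma \ref{lem:airylem} a fixed cover with uniformly bounded Lipschitz constants makes your weaker conclusion sufficient); and Definition \ref{def:minimal} must be read as placing $U$ locally on one side of each graph, which you use implicitly when identifying $U$ with a special Lipschitz domain on $\supp\zeta_i$.
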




\section{Airy potentials and boundary conditions}
\label{sec:airy-potent-bound}
In the present section, we assume that $\Omega\subset\R^2$ satisfies Definition \ref{def:H1}.
\subsection{Airy potentials}
Here we are going to rephrase the compliance minimization problem and the Michell problem.
We use the  representation of divergence free stresses in two
dimensions by Airy potentials. We recall that for $A\in\Rsym$, the
cofactor matrix $\cof A$ is defined by
\[
\cof A=\left(\begin{array}{cc} A_{22} & -A_{12}\\-A_{12} &
    A_{11}\end{array}\right)\,.
\]
Note that in two dimensions, we have $\cof\cof A=A$.

In the compliance minimization problem, we say that $u\in H^{2}(\Omega)$
is an Airy potential  for $\sigma\in S_g(\Omega)$ if

\begin{equation}
\nabla^2 u=\cof \sigma \quad\text{ in }\Omega\,.\label{eq:22}
\end{equation}
Note that in such a situation, we have $\div\sigma=\curl\cof\sigma=0$.
Since $A\mapsto \cof A$ is linear on two by two matrices, the object $\cof \mu$
is well defined for $\mu\in\M(\overline \Omega;\Rsym)$. 
We say that the function $u\in W^{1,1}(U)$ is
an Airy potential for
$\sigma\in \Sigma_g(\Omega)$ if $U$ is a neighborhood of $\overline\Omega$,
and
\begin{equation}
 D^2 u\ecke\overline{\Omega}= \cof \sigma \label{eq:45}
 \end{equation}
as elements of $\M(\overline\Omega;\Rsym)$.
Our definitions of  Airy potentials make sense by the Poincar\'e Lemma;
this statement is made precise in the following lemma. 
\begin{lemma}
\label{lem:airylem}
  We have 
\[
\{\sigma\in\M(\Omega;\Rsym):\curl\sigma=0\}=\{D^2u:u\in BH(\Omega)\}
\]
and
\[
\{\sigma\in L^2(\Omega;\Rsym):\curl\sigma=0\}=\{\nabla^2u:u\in H^2(\Omega)\}\,.
\]
\end{lemma}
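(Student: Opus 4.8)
The plan is to prove the two identities by the classical Poincaré lemma on the simply connected domain $\Omega$, applied twice, once to recover a vector potential from the curl-free condition and once more to recover a scalar potential. I will present the argument for the $BH$ case, since the $H^2$ case is identical with measures replaced by $L^2$ functions and "$\div = 0$ in the sense of distributions" replaced by the same in the classical weak sense. First I would prove the inclusion "$\supseteq$": if $u \in BH(\Omega)$, then $\sigma := D^2 u \in \M(\Omega; \Rsym)$ by definition of $BH$, and the symmetry of second distributional derivatives gives $\curl \sigma = 0$ (here $\curl$ of a symmetric matrix measure is taken row by row; for $\sigma = D^2 u$ one has $\curl \sigma_i = \partial_1 \partial_i \partial_2 u - \partial_2 \partial_i \partial_1 u = 0$ as distributions, hence as measures). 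The $H^2$ case is verbatim the same.

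For the reverse inclusion "$\subseteq$", let $\sigma \in \M(\Omega; \Rsym)$ with $\curl \sigma = 0$. I would first mollify: set $\sigma_\e = \sigma * \eta_\e$ on $\Omega_\e := \{x \in \Omega : \dist(x, \partial\Omega) > \e\}$, which is a smooth symmetric-matrix field with $\curl \sigma_\e = 0$ on $\Omega_\e$. On each ball $B \subset \Omega_\e$ the condition $\curl (\sigma_\e)_i = 0$ (i.e. $\partial_1 (\sigma_\e)_{i2} = \partial_2 (\sigma_\e)_{i1}$) is the integrability condition for a potential $w^{(i)}_\e$ with $\nabla w^{(i)}_\e = ((\sigma_\e)_{i1}, (\sigma_\e)_{i2})$; writing $w_\e = (w^{(1)}_\e, w^{(2)}_\e)$, one then checks that symmetry $\sigma_{12} = \sigma_{21}$ forces $\partial_2 w^{(1)}_\e = \partial_1 w^{(2)}_\e$, which is exactly the integrability condition for a \emph{scalar} potential $u_\e$ with $\nabla u_\e = (w^{(1)}_\e, w^{(2)}_\e)$, hence $\nabla^2 u_\e = \sigma_\e$. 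Since $\Omega$ is simply connected, the local potentials patch to a global $u_\e \in C^\infty(\Omega_\e)$ (unique up to an affine function, which I fix by normalizing $u_\e$ and $\nabla u_\e$ to have zero average on some fixed ball $B_0 \Subset \Omega$). The bound $|D^2 u_\e|(\Omega_\e) = \int_{\Omega_\e} |\sigma_\e| \le |\sigma|(\Omega)$ together with the $BH$-Poincaré inequality (Lemma \ref{lem:BHpoincare}, applied on a fixed slightly smaller nice subdomain, or simply the $BV$-Poincaré inequality on $\nabla u_\e$ followed by one for $u_\e$) gives a uniform $BH$ bound; by the compactness Theorem \ref{thm:BHcompact} a subsequence converges weakly$*$ to some $u \in BH(\Omega)$ (after an exhaustion/diagonal argument over an increasing sequence of nice subdomains, using that the local $u_\e$ agree up to affine maps so the normalizations are consistent). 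Passing to the limit in $D^2 u_\e = \sigma_\e \to \sigma$ weakly$*$ yields $D^2 u = \sigma$, which proves $\sigma \in \{D^2 u : u \in BH(\Omega)\}$. For the $H^2$ statement the same mollification/Poincaré lemma gives $u_\e \in C^\infty$ with $\nabla^2 u_\e = \sigma_\e$, and now $\|\nabla^2 u_\e\|_{L^2} \le \|\sigma\|_{L^2}$ plus the Poincaré inequality gives a uniform $H^2$ bound, so a weakly convergent subsequence has limit $u \in H^2(\Omega)$ with $\nabla^2 u = \sigma$.

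The main obstacle is the globalization and the passage to the limit on a domain that is merely simply connected with corners: one must be careful that the local potentials obtained on an exhausting family of interior subdomains can be glued with \emph{consistent} normalizations, and that the compactness/weak$*$-limit argument is carried out on an exhaustion $\Omega_k \uparrow \Omega$ with a diagonal extraction, since Theorem \ref{thm:BHcompact} is stated for a fixed domain. (Simple connectedness of $\Omega$ is used precisely to rule out period/monodromy obstructions when gluing; boundary regularity plays essentially no role here, as the statement concerns the open set $\Omega$.) Everything else — the two applications of the Poincaré lemma, the role of the symmetry constraint in producing first a vector and then a scalar potential, and the uniform estimates — is routine.
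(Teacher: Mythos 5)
Your local construction (mollify, apply the Poincar\'e lemma twice, use the symmetry of $\sigma$ to pass from the vector potential to a scalar potential) is exactly the paper's, and the inclusion ``$\supseteq$'' is fine. The genuine gap is in the globalization, precisely at the point you dismiss with the remark that ``boundary regularity plays essentially no role''. Your exhaustion/diagonal argument, with normalization on a fixed ball $B_0\Subset\Omega$, only yields a potential $u$ lying in $BH(\Omega_k)$ for each member of the exhaustion, together with $|D^2u|(\Omega)\leq|\sigma|(\Omega)$; that is, $u\in BH_{\mathrm{loc}}(\Omega)$ with finite Hessian mass. The lemma asserts $u\in BH(\Omega)$, which in addition requires $u,\nabla u\in L^1(\Omega)$ up to the boundary, and this does \emph{not} follow from local information plus finite mass on a general bounded, simply connected open set: on a ``rooms and corridors'' domain one can take $u$ continuous and piecewise affine, with gradient equal to a constant $c_j$ of size $8^j$ on a room of area $4^{-j}$, the rooms joined by corridors of width $32^{-j}$; then $|D^2u|(\Omega)<\infty$ while $\nabla u\notin L^1(\Omega)$, so $\sigma=D^2u$ is a finite curl-free measure admitting no $BH(\Omega)$ potential (any other potential differs from $u$ by an affine function). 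To turn the uniform bound on $\int_{\Omega_k}|\nabla^2u_\e|\,\d x$ into a bound on $\|u_\e\|_{W^{1,1}(\Omega_k)}$ that is uniform in $k$ as well as in $\e$, you need Poincar\'e constants for the exhausting subdomains that are uniform in $k$ --- and that is exactly where the regularity of $\partial\Omega$ from Definition \ref{def:H1} enters. The same issue affects your $H^2$ argument.

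The paper closes this step differently: it observes that the interior sets $\{x\in\Omega:\dist(x,\partial\Omega)>\e\}$ have boundaries which are locally graphs of Lipschitz functions with Lipschitz constants bounded uniformly in $\e$ (this uses Definition \ref{def:H1}), so Stein's extension theorem (Theorem \ref{thm:steinext}) extends the local potentials to functions $u_\e\in W^{2,1}(\Omega)$ (respectively $H^2(\Omega)$) with an operator norm independent of $\e$; after subtracting affine functions and using the Poincar\'e inequality, one gets $\|u_\e\|_{BH(\Omega)}\lesssim|\sigma|(\Omega)$ on the \emph{fixed} domain $\Omega$, so Theorem \ref{thm:BHcompact} applies directly, no diagonal extraction is needed, and the limit automatically lies in $BH(\Omega)$. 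You could instead repair your route by proving uniform Poincar\'e constants for your exhaustion, but that again rests on the boundary regularity you set aside; as written, the final step of your argument is not justified.
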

\begin{proof}
The inclusion $\{D^2 u:u\in BH(\Omega)\}\subseteq \{\sigma\in\M(\Omega;\Rsym):\curl\sigma=0\}$ is
obvious. For the opposite inclusion, let $\sigma\in \M(\Omega;\Rsym)$ with
$\curl \sigma=0$. Let $\varphi$ be a standard mollifier,
i.e., $\varphi\in C_c^\infty(\R^2;\R^2)$, $\supp\varphi\subset \{x\in\R^2:|x|<1\}$,
$\int_{\R^2}\varphi\d x=1$, and
$\varphi_\e:=\e^{-2}\varphi(\cdot/\e)$.  On
${\bar \Omega}_\e:=\{x\in \Omega:\dist(\partial \Omega,x)>\e\}$, set
$\sigma_\e:=\sigma*\varphi_\e$. Note that we have $\sigma_\e\in
C^\infty({\bar \Omega}_\e;\Rsym)$ with $\curl\sigma_\e=0$ on ${\bar \Omega}_\e$. For $\e$ small
enough, ${\bar \Omega}_\e$ is simply connected, and by the Poincar\'e Lemma there
exists $v_\e\in C^\infty({\bar \Omega}_\e;\R^2)$ such that $\nabla v_\e=\sigma_\e$. For
every $(\bar x,\bar y)\in {\bar \Omega}_\e$, there exists an open square $Q\subset
{\bar \Omega}_\e$ with center $(\bar x,\bar y)$. On $Q$, we have
\[
v_\e( x, y)=v(\bar x,\bar y)+\left( \begin{array}{c}\int_{\bar x}^x (\sigma_\e)_{11}(t,0)\d
  t+\int_{\bar y}^y(\sigma_\e)_{12}(x,t)\d t\\
\int_{\bar x}^x (\sigma_\e)_{21}(t,y)\d
  t+\int_{\bar y}^y(\sigma_\e)_{22}(0,t)\d t\end{array}\right)\,.
\]
Using $(\sigma_\e)_{12}=(\sigma_\e)_{21}$, one easily obtains $\curl v_\e=0$ on
$Q$, and hence on all of ${\bar \Omega}_\e$.
Again by the Poincar\'e Lemma there exists $\tilde u_\e\in
C^\infty({\bar \Omega}_\e)$ such that $\sigma_\e=\nabla^2 \tilde u_\e$ on ${\bar
  \Omega}_\e$. 
Of course, the sets $\bar\Omega_\e$ have Lipschitz boundary. Moreover, there
exist open sets $U_1,\dots,U_M$ such that for $\e$ small
enough,  $U_i\cap\partial\bar
\Omega_\e$ can be represented as the graph of some Lipschitz function $w_{i,\e}$, and the Lipschitz
constants of $w_{i,\e}$ are uniformly bounded. By Theorem \ref{thm:steinext},
we may
 extend $\tilde u_\e\in W^{2,1}(\bar\Omega_\e)$ to $u_\e\in W^{2,1}(\Omega)$ such that 
\begin{equation}
\|u_\e\|_{W^{2,1}(\Omega)}\leq C\|\tilde u_\e\|_{W^{2,1}({\bar \Omega}_\e)}\,,\label{eq:24}
\end{equation}
where $C$ does not depend on $\e$.
After subtracting
suitable affine functions, we may assume
\[
\int_\Omega u_\e\d x=0,\quad\int_\Omega \nabla u_\e\d x=0\,.
\]
From \eqref{eq:24} and the Poincar\'e inequality in $BH$ (see \cite{demengel1984fonctions}) it follows
\[
\|u_\e\|_{BH(\Omega)}\lesssim \|\nabla^2 u_\e\|_{L^{1}(\Omega)}\lesssim |\sigma|(\Omega)\,.
\]
By Theorem \ref{thm:BHcompact}, we  obtain that 
there exists $u\in BH(\Omega)$ such that $u_\e\to u$ in $BH(\Omega)$, with
$D^2u=\sigma$. This proves the first statement. The second statement is proved
in the same way, using Theorem \ref{thm:steinext} for the extension
$H^2(\bar\Omega_\e)\to H^2(\Omega)$, and weak compactness of the resulting bounded
sequence $u_\e$ in $H^2(\Omega)$.
\end{proof}

\subsection{Boundary values}

We say that $g\in W^{-1,1}(\partial\Omega;\R^2)$ is balanced if 
\[
\int_{\partial\Omega}(M x+b)\cdot g(x)\d\H^1=0\quad\text{ for all }M\in
\R^{2\times 2}_{\mathrm{skew}}\text{ and }b\in\R^2\,.
\]
It only makes sense to consider balanced traction boundary values, as can be
seen from the following well known lemma
(see e.g.~\cite{bouchitte2008michell}):
\begin{lemma}
\label{lem:balanced}
  If $\Sigma_g(\Omega)\neq\emptyset$, then $g$ is balanced. 
\end{lemma}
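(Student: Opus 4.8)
The plan is to show that if $\Sigma_g(\Omega)\neq\emptyset$, say $\sigma\in\M(\overline\Omega;\Rsym)$ with $-\div\sigma = g\H^1\ecke\partial\Omega$, then $g$ satisfies the balancing condition by testing the defining equation \eqref{eq:44} against the infinitesimal rigid motions $\varphi(x)=Mx+b$ with $M\in\R^{2\times 2}_{\mathrm{skew}}$ and $b\in\R^2$. The point is simply that such $\varphi$ have gradient $\nabla\varphi = M$, a constant skew-symmetric matrix, and pairing a constant skew matrix against a symmetric matrix gives zero.

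Concretely, first I would recall that $-\div\sigma = g\H^1\ecke\partial\Omega$ means, by the row-wise version of \eqref{eq:44} applied to $\sigma\in\M(\overline\Omega;\Rsym)$ and $f = g\H^1\ecke\partial\Omega\in\mathcal{E}'(\overline\Omega;\R^2)$, that
\[
\int_{\overline\Omega}\nabla\varphi : \frac{\d\sigma}{\d|\sigma|}\,\d|\sigma| = \int_{\partial\Omega}\varphi\cdot g\,\d\H^1
\]
for every $\varphi\in C^1(\R^2;\R^2)$, where the left-hand side is $\sum_i\int_{\overline\Omega}\nabla\varphi_i\cdot\d\sigma_i$. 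Strictly this requires $\varphi\in C^1$; affine maps are smooth, so this is legitimate (one may also note the test-function class can be taken as $C^1(\R^2)$ as in \eqref{eq:44}).

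Next I would plug in $\varphi(x) = Mx + b$ with $M = -M^T$ and $b\in\R^2$ arbitrary. Then $\nabla\varphi(x) = M$ identically, so the left-hand side becomes $\int_{\overline\Omega} M : \frac{\d\sigma}{\d|\sigma|}\,\d|\sigma|$. Since $\frac{\d\sigma}{\d|\sigma|}(x)\in\Rsym$ for $|\sigma|$-a.e.\ $x$ (because $\sigma\in\M(\overline\Omega;\Rsym)$) and $M$ is skew-symmetric, the pointwise contraction $M:\frac{\d\sigma}{\d|\sigma|}(x) = \sum_{i,j}M_{ij}\bigl(\frac{\d\sigma}{\d|\sigma|}\bigr)_{ij}$ vanishes: a skew matrix contracted with a symmetric matrix is zero. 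Hence the left-hand side is $0$, and therefore
\[
\int_{\partial\Omega}(Mx+b)\cdot g(x)\,\d\H^1 = 0
\]
for all $M\in\R^{2\times 2}_{\mathrm{skew}}$ and $b\in\R^2$, which is exactly the statement that $g$ is balanced.

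There is no real obstacle here; the only minor point worth checking is that the pairing of $g\H^1\ecke\partial\Omega$ with the linear map $\varphi$ is well-defined, i.e.\ that $g\in W^{-1,1}(\partial\Omega;\R^2)$ can indeed be tested against the restriction of an affine function to $\partial\Omega$ — this holds since affine functions restrict to $W^{1,\infty}(\partial\Omega;\R^2)\subset$ the predual of $W^{-1,1}$, and consistency with the distributional pairing in \eqref{eq:44} is built into the definition of $-\div\sigma = g\H^1\ecke\partial\Omega$. Thus the proof is essentially the one-line observation ``skew contracted with symmetric is zero,'' applied inside the defining identity \eqref{eq:44}.
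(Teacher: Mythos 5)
Your proposal is correct and is essentially the paper's own argument: the paper tests the identity \eqref{eq:44} against the constant fields $(1,0)$, $(0,1)$ and against $x^\bot$ (noting every $2\times 2$ skew matrix is a multiple of the rotation generator), and the vanishing of the left-hand side comes, exactly as in your write-up, from the fact that a skew-symmetric constant gradient contracted with the symmetric-matrix-valued measure $\sigma$ is zero. Your single-step version with $\varphi(x)=Mx+b$ is just a compact packaging of the same computation.
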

\begin{proof}
Assume $\sigma\in \Sigma_g(\Omega)$.  Taking $\varphi(x)=(1,0)$ or $\varphi(x)=(0,1)$ and testing these functions against the identity $-\div
\sigma=g\H^1\ecke\partial\Omega$ (see \eqref{eq:44}), we obtain
\[
\int_{\partial\Omega} g\d\H^1=  0\,.
\]
Secondly, taking $\varphi(x)=x^\bot=(-x_2,x_1)$ as a test function, we get
\[
\int_{\partial\Omega}x^\bot \cdot g\,\d\H^1=  \int_\Omega
\left(\begin{array}{cc}0&-1\\1&0\end{array}\right) :\frac{\d\sigma}{\d|\sigma|}\d|\sigma|=0\,.
\]
The latter holds since $\sigma$ has values in the symmetric matrices.
This proves the lemma, since for every $M\in \R^{2\times 2}_{\mathrm{skew}}$,
there exists $c\in\R$ such that $Mx=c x^\bot$.
\end{proof}

For certain $h\in W^{-1,1}(\partial\Omega;\R^2)$, we now define  two integrals
$h^{(1)},h^{(2)}$. Let $x_0\in\partial\Omega$ be fixed,
$L:=\H^1(\partial\Omega)$, and let $\vartheta_{x_0}:[0,L]\to\partial\Omega$
denote the positively oriented simple Lipschitz curve that satisfies
\[
|\vartheta_{x_0}'|=1\,,\quad
\vartheta_{x_0}(0)=\vartheta_{x_0}(L)=x_0\,,\quad
\vartheta_{x_0}([0,L])=\partial\Omega\,.
\]
Obviously, $\vartheta_{x_0}|_{(0,L)}$ is a Bilipschitz homeomorphism.

For $\varphi\in L^{\infty}(\partial\Omega)$ with
$\int_{\partial\Omega}\varphi\d\H^1=0$, we may define its first integral
$\Phi(\varphi)\in  W^{1,\infty}(\partial\Omega)$ by
\[
\Phi(\varphi)(\vartheta_{x_0}(x))=\int_0^x \varphi\circ\vartheta_{x_0}(t)\d
t-c_\varphi\,,
\]
where $c_\varphi$ is chosen such that $\int_{\partial\Omega}\Phi(\varphi)\d\H^1=0$.
We may extend this definition to $h\in W^{-1,1}(\partial\Omega)=(W^{1,\infty}(\partial\Omega))'$ with
$\left<h,\chi_{\partial\Omega}\right>=0$, where $\chi_{\partial\Omega}$ is the function defined by
$\chi_{\partial\Omega}(x)=1$ for all $x\in \partial\Omega$: We let $\Phi(h)\in \mathcal
(L^\infty(\partial\Omega))'$ (to be thought of as the first integral of $h$) be defined by
\[
\left<\Phi(h),\varphi\right>=-\left<h,\Phi\left(\varphi-\fint_{\partial\Omega}\varphi\d\H^1\right)\right>\quad\text{
  for all } \varphi\in L^\infty(\partial\Omega)\,.
\]
%
For  vector valued arguments,  we may define $\Phi:
W^{-1,1}(\partial\Omega;\R^2)\to (L^\infty(\partial\Omega;\R^2))'$ by its action on
the components of its argument. 

We recall that $n$ denotes the unit outer normal of $\partial\Omega$, and
$\tau=(-n_2,n_1)$. Let these objects be understood as 
functions in $L^\infty(\partial\Omega;\R^2)$. If
$h\in  W^{-1,1}(\partial\Omega;\R^2)$ with $\left<h_i,\chi_{\partial\Omega}\right>=0$
for $i=1,2$, then $\tau\cdot \Phi(h)$ can be understood as an element of
$(L^\infty(\partial\Omega))'$, by 
\[
\left<\tau\cdot \Phi(h),\varphi\right>=\left<\Phi(h),\tau
  \varphi\right>\quad\text{ for all }\varphi\in L^\infty(\partial\Omega)\,.
\]
If we assume furthermore
$\left<\tau\cdot\Phi(h),\chi_{\partial\Omega}\right>=0$, we can define the first and second integrals
$h^{(1)}\in  (L^\infty(\partial\Omega;\R^2))'$ and $h^{(2)}\in
\Phi((L^\infty(\partial\Omega))')$  by
\begin{equation}
  \label{eq:35}
  \begin{split}
    h^{(1)}&=\Phi(h)\\
    h^{(2)}&=\Phi(\tau\cdot\Phi(h))\,.
  \end{split}
\end{equation}

In order to make the transition between stresses and their Airy potentials, the
following definition will be convenient: Let 
\[
X:=\left\{g\in W^{-1,1}(\partial\Omega;\R^2): n\cdot (g^\bot)^{(1)}\in L^1(\partial\Omega),
\,(g^\bot)^{(2)}\in \gamma_0(W^{2,1}(\Omega))\right\}\,.
\]
We make $X$ into a topological vector space by letting the topology on $X$ be
the strongest one 
that makes the following map continuous:
\[
\begin{split}
  X&\to L^1(\partial\Omega)\times\gamma_0(W^{2,1}(\Omega))\\
  g&\mapsto (n\cdot (g^\bot)^{(1)},(g^\bot)^{(2)})\,.
\end{split}
\]
\begin{remark}
\label{rem:balanced}
  \begin{itemize}
\item[(i)] The requirements for the existence of $(g^\bot)^{(1)},(g^\bot)^{(2)}$, namely that
  \begin{equation}
\label{eq:20}
\left<g_i,\chi_{\partial\Omega}\right>=0\quad\text{ for }i=1,2\,,\qquad
\left<\tau\cdot\Phi(g^\bot ),\chi_{\partial\Omega}\right>=0\,,
\end{equation}
precisely express that $g$ has to be balanced, $\int_{\partial\Omega}(Mx+b)\cdot
g\d\H^1(x)=0$ for all $M\in \R^{2\times 2}_{\mathrm{skew}}$, $b\in\R^2$.
To see that $\int_{\partial\Omega}{Mx}\cdot g\d \H^1=0$ for all $M\in
  \R^{2\times 2}_{\mathrm{skew}}$ is equivalent with
  the second equation in \eqref{eq:20}, we observe that
\[
\begin{split}
  \int_{\partial\Omega}\left(\begin{array}{cc}0&-1\\1&0\end{array}\right)x\cdot
  g\d \H^1&=-\int_{\partial\Omega}x\cdot
  g^\bot\d \H^1\\
  &=-\int_{\partial\Omega}\left(x_0+\int_{0}^{\gamma_{x_0}^{-1}(x)}\tau\d\H^1\right)\cdot
  g^\bot(x)\d\H^1(x)\\
  &=-\left<\Phi(g^\bot),\tau\right>=-\left<\tau\cdot\Phi(g^\bot),\chi_{\partial\Omega}\right>\,.
\end{split}
\]
\item[(ii)] The space $X$ is a replacement for $W^{-1,1}(\partial\Omega;\R^2)$; the
    latter is slightly too large for our purposes. In the formulation of the
    compliance minimization problem via Airy potentials, we need to translate
    the integrals $(g^\bot)^{(2)},(g^\bot)^{(1)}\cdot n$ of the boundary values $g\in X$ into
    boundary values of a function in $BH(\Omega)$ and its normal
    derivative. This is not possible for $W^{-1,1}(\partial\Omega;\R^2)$,
    firstly because $L^1(\partial\Omega)\subsetneq
    \Phi(W^{-1,1}(\partial\Omega))$, and secondly because
    $\gamma_0(BH(\Omega))=\gamma_0(W^{2,1}(\Omega))\subsetneq
    W^{1,1}(\partial\Omega)=\Phi(L^1(\partial\Omega))$. Nevertheless, with our
    choice of $X$ we have $H^{-1/2}(\partial\Omega;\R^2)\subset X$ and
    furthermore $X$ contains  balanced finite sums of delta distributions that
    do not contain applied forces tangential to $\partial \Omega$. For the
    precise statement,  see Lemma \ref{lem:deltabdry} below.
  \end{itemize}
\end{remark}

The upcoming lemma only serves to prove the claim made in the previous remark
and can be skipped by the reader who is only interested in the statement and proof of the main
theorem. 

\medskip

For $v\in \R^2$
and $x\in \partial\Omega$, we write $v\in \mathcal T_x(\partial\Omega)$ if there exists $\e>0$ such
that either $\{x+tv:t\in(-\e,\e)\}\cap\Omega=\emptyset$ or $\{x+tv:t\in(-\e,\e)\}\subset\overline{\Omega}$. This is the case, for example,
if $v$ is the tangent vector to $\partial \Omega$ in a point $x$ where the
curvature does not change sign. (If the curvature changes sign at $x$, then $\mathcal T_{x}(\partial\Omega)=\emptyset$.) If $\partial\Omega$ is not $C^2$ near $x$, then
the set $\mathcal T_x(\partial\Omega)$ is larger, see Figure \ref{fig:tcal}.

\begin{figure}[h]
\includegraphics{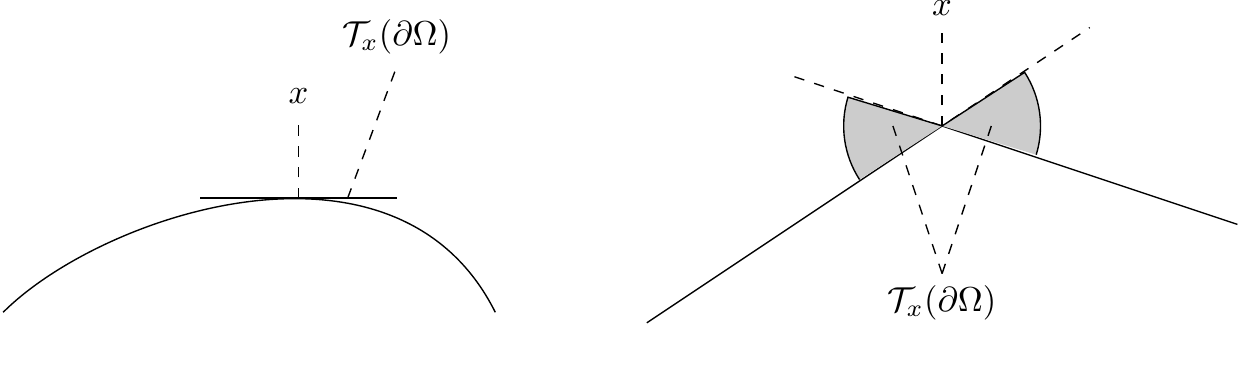}
\caption{In the left panel, the space $\mathcal T_{x}(\partial\Omega)$ is just
  the tangent space at $x\in\partial\Omega$. In the right panel, the space $\mathcal
  T_{x}(\partial\Omega)$ is a cone. \label{fig:tcal}}
\end{figure}

\begin{lemma}
\label{lem:deltabdry}
For $i=1,\dots,N$ let $x_i\in\partial\Omega$ and $v_i\in\R^2$ such that
$v_i\not\in \mathcal T_{x_i}(\partial\Omega)$, and additionally
\[
\sum_{i}v_i=\sum_{i}v_i\cdot x_i^\bot=0\,.
\]
Then $g=\sum_{i}\delta_{x_i} v_i$ is balanced, and $g\in X$.
\end{lemma}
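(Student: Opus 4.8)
\textbf{Proof plan for Lemma \ref{lem:deltabdry}.}

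The plan is to verify the two defining conditions for membership in $X$ separately, after first checking that $g$ is balanced. Balancedness is immediate: testing $g=\sum_i \delta_{x_i}v_i$ against the affine maps $x\mapsto b$ and $x\mapsto x^\bot$ reduces exactly to the two hypotheses $\sum_i v_i=0$ and $\sum_i v_i\cdot x_i^\bot=0$, so by Remark \ref{rem:balanced}(i) the integrals $(g^\bot)^{(1)}$ and $(g^\bot)^{(2)}$ are well defined. The real content is to show that $n\cdot (g^\bot)^{(1)}\in L^1(\partial\Omega)$ and $(g^\bot)^{(2)}\in\gamma_0(W^{2,1}(\Omega))$. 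To compute these, I would parametrize $\partial\Omega$ by arclength via $\vartheta_{x_0}$ and trace through the definitions of $\Phi$ applied to a sum of delta distributions: applying the first integral $\Phi$ to $g^\bot=\sum_i\delta_{x_i}v_i^\bot$ produces a function on $\partial\Omega$ that is piecewise constant — a finite-dimensional span of indicator functions of arcs, plus a normalizing constant — with jumps of size $v_i^\bot$ at the points $x_i$. So $(g^\bot)^{(1)}$ is a bounded $\R^2$-valued function on $\partial\Omega$, and a fortiori $n\cdot(g^\bot)^{(1)}\in L^\infty(\partial\Omega)\subset L^1(\partial\Omega)$; the first condition is essentially automatic.

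The substantive step is the second condition, $(g^\bot)^{(2)}\in\gamma_0(W^{2,1}(\Omega))$. By definition $(g^\bot)^{(2)}=\Phi(\tau\cdot\Phi(g^\bot))$, i.e.\ we integrate once more the scalar function $\tau\cdot(g^\bot)^{(1)}$, which is bounded and piecewise-smooth (piecewise equal to $\tau$ dotted with a locally constant vector, hence as regular as $\tau$ itself, which is $C^1$ away from the corners $x_i$ of Definition \ref{def:H1}). Integrating a bounded function along the arclength curve yields a Lipschitz function on $\partial\Omega$; away from the finitely many points $x_i$ (the original corners together with the load points) it is $C^1$ in the arclength variable. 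The key point is to show that such a function lies in $\gamma_0(W^{2,1}(\Omega))$, i.e.\ is the trace of a $W^{2,1}(\Omega)$ function. Here I would invoke Theorem \ref{thm:traceop}(i): the trace image $\gamma_0(W^{2,1}(\Omega))$ was described, and by the Remark following it, near each boundary point the obstruction to being a $W^{2,1}$-trace is captured by a one-dimensional $W^{2,1}$ condition — roughly, the function restricted to the boundary must have a second derivative (in arclength) that is integrable. For a function that is piecewise $C^2$ with only corner-type singularities inherited from $\tau$ at the points $x_i$, this holds: the second arclength derivative of $(g^\bot)^{(2)}$ equals $\partial_\tau(\tau\cdot(g^\bot)^{(1)})$, which away from the $x_i$ is bounded by $|\partial_\tau\tau|$ times a constant (curvature times a constant), hence in $L^1(\partial\Omega)$. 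One must check that no non-integrable singularity is created at the $x_i$: this is exactly where the hypothesis $v_i\notin\mathcal T_{x_i}(\partial\Omega)$ enters — it guarantees that the jump contributed by $v_i$ to $(g^\bot)^{(1)}$ is "transverse" enough that, after the second integration, the resulting trace still extends to $W^{2,1}$; if $v_i$ were tangential the construction would force a distributional second derivative at $x_i$ that is not absolutely continuous, obstructing the extension.

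I expect the main obstacle to be this last point: pinning down precisely why the transversality condition $v_i\notin\mathcal T_{x_i}(\partial\Omega)$ is exactly what makes $(g^\bot)^{(2)}$ lie in $\gamma_0(W^{2,1}(\Omega))$ rather than merely in $W^{1,1}(\partial\Omega)$. Concretely, I would localize near each $x_i$ using the diffeomorphisms $\varphi_i$ from Definition \ref{def:H1}(ii), reduce to the model corner $[0,1]\times\{0\}\cup\{0\}\times[0,1]$, and there use the explicit Gagliardo-type construction referenced in the Remark after Theorem \ref{thm:traceop} to build a local $W^{2,1}$ extension from the given boundary datum; the transversality of $v_i$ translates into the statement that the prescribed boundary function and its one-sided arclength derivatives match up compatibly across the corner so that the construction applies. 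Patching these local extensions with a partition of unity subordinate to the cover $\{U_i\}$, together with a global $W^{2,1}$ extension away from $\partial\Omega$ (which is unconstrained since $(g^\bot)^{(2)}$ is only a boundary datum), yields the desired element of $W^{2,1}(\Omega)$ with the prescribed trace, completing the verification that $g\in X$.
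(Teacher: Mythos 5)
The routine parts of your plan (balancedness, $(g^\bot)^{(1)}$ piecewise constant, hence $n\cdot(g^\bot)^{(1)}\in L^\infty\subset L^1$) agree with the paper, but the step that carries all the weight --- showing $(g^\bot)^{(2)}\in\gamma_0(W^{2,1}(\Omega))$ --- rests on a criterion that does not exist and that your function would in any case violate. You assert that membership in $\gamma_0(W^{2,1}(\Omega))$ is ``captured by a one-dimensional $W^{2,1}$ condition'', i.e.\ integrability of the second arclength derivative of the boundary datum. There is no such characterization: the paper itself points out that $\gamma_0(W^{2,1}(\Omega))\subsetneq W^{1,1}(\partial\Omega)$ and that this trace space has no simple intrinsic description, and the Gagliardo-type construction quoted after Theorem \ref{thm:traceop} prescribes the \emph{normal derivative} for a datum whose trace component is already known to lie in $\gamma_0(W^{2,1}(\Omega))$ --- invoking it here is circular. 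Worse, the proposed criterion fails for the function at hand: $\partial_\tau (g^\bot)^{(2)}=\tau\cdot(g^\bot)^{(1)}$ jumps at each $x_i$ by $v_i^\bot\cdot\tau(x_i)=v_i\cdot n(x_i)$, which is nonzero \emph{precisely because} $v_i$ is transversal; so the second arclength derivative of $(g^\bot)^{(2)}$ contains Dirac masses and is not in $L^1(\partial\Omega)$. Your reading of the hypothesis is therefore backwards: transversality \emph{creates} the corner in the boundary datum rather than preventing a singularity (a tangential $v_i$ would make $\partial_\tau(g^\bot)^{(2)}$ continuous at $x_i$), and the obstruction in the tangential case is of a different nature.

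What the paper actually does is an explicit extension, and this is where $v_i\notin\mathcal T_{x_i}(\partial\Omega)$ enters: transversality guarantees that a short segment $[x_i,\bar x_i]$ in the direction $v_i$ meets $\partial\Omega$ only at $x_i$ and has its other endpoint $\bar x_i$ inside $\Omega$. These segments, together with a polygonal $\tilde\Omega\Subset\Omega$ through the points $\bar x_i$, cut the collar $\Omega\setminus\tilde\Omega$ into sectors $Q_i$, on which one sets $u(x)=(g^\bot)^{(2)}(x_i)+F_i\cdot(x-x_i)$ with $F_i$ the value of $(g^\bot)^{(1)}$ on the corresponding arc. Continuity of $u$ across the cut $[x_{i+1},\bar x_{i+1}]$ holds because the gradient jump $F_{i+1}-F_i=v_{i+1}^\bot$ is orthogonal to the cut direction $v_{i+1}$ --- this is the algebraic role of transversality: the corner of the boundary datum is realized by a gradient jump across a line entering the domain, which must be parallel to $v_{i+1}$. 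Extending $u$ piecewise affinely over a triangulation of $\tilde\Omega$ gives a continuous piecewise affine function, hence $u\in BH(\Omega)$ with $\gamma_0(u)=(g^\bot)^{(2)}$ and $\gamma_1(u)=(g^\bot)^{(1)}\cdot n\in L^1(\partial\Omega)$, and one concludes via $\gamma_0(BH(\Omega))=\gamma_0(W^{2,1}(\Omega))$ from Theorem \ref{thm:traceop}. Some such construction (an actual extension into $\Omega$, not an intrinsic condition on $\partial\Omega$) is needed; as written, your argument has a genuine gap at exactly this point.
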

\begin{proof}
Using the definitions, the fact that $g$  is balanced is obvious.
With the notation we have introduced above, and the assumption $x_0\neq x_i$
for $i=1,\dots,N$, we have
\[
(g^\bot)^{(1)}(\vartheta_{x_0}(x))=\sum_{i:\vartheta_{x_0}^{-1}(x_i)< x} v_i^\bot\,.
\]
Hence $(g^\bot)^{(1)}$ is a piecewise constant function
$\partial\Omega\to\R^2$. Let $F_i\in\R^2$ denote the value of $(g^\bot)^{(1)}$
on the arc connecting $x_i$ and $x_{i+1}$ in $\partial \Omega$
(counterclockwise). Furthermore we note that $(g^\bot)^{(2)}\in W^{1,\infty}(\partial\Omega)$
with $\partial_\tau  (g^\bot)^{(2)}=(g^\bot)^{(1)}\cdot \tau$. 
For every $i=1,\dots,N$, choose $\e_i>0$ at least so small that
\[
\begin{split}
  [x_i-\e_iv_i,x_i+\e_iv_i]\cap \partial\Omega&= \{x_i\}\\
  [x_i-\e_iv_i,x_i+\e_iv_i]\cap[x_j-\e_jv_j,x_j+\e_jv_j]&=\emptyset\quad\text{
    for }i\neq j\,.
\end{split}
\]
For $i=1,\dots,N$, there is exactly  one out of the two points $x_i\pm\e_iv_i$
that is contained in $\Omega$. Denote this point by $\bar x_i$. 
Let $\tilde\Omega\Subset\Omega$ be a simply connected polygonal domain with
$\bar x_i\in \partial\tilde\Omega$ for $i=1,\dots,N$.
Let $Q_i$ denote the open subset of $\Omega\setminus\tilde\Omega$ that is bounded by
$\partial\Omega$, $[x_i,\bar x_i]$, $\partial\tilde\Omega$ and $[x_{i+1},\bar
x_{i+1}]$, see Figure \ref{fig:omtil}.

\begin{figure}[h]
\includegraphics{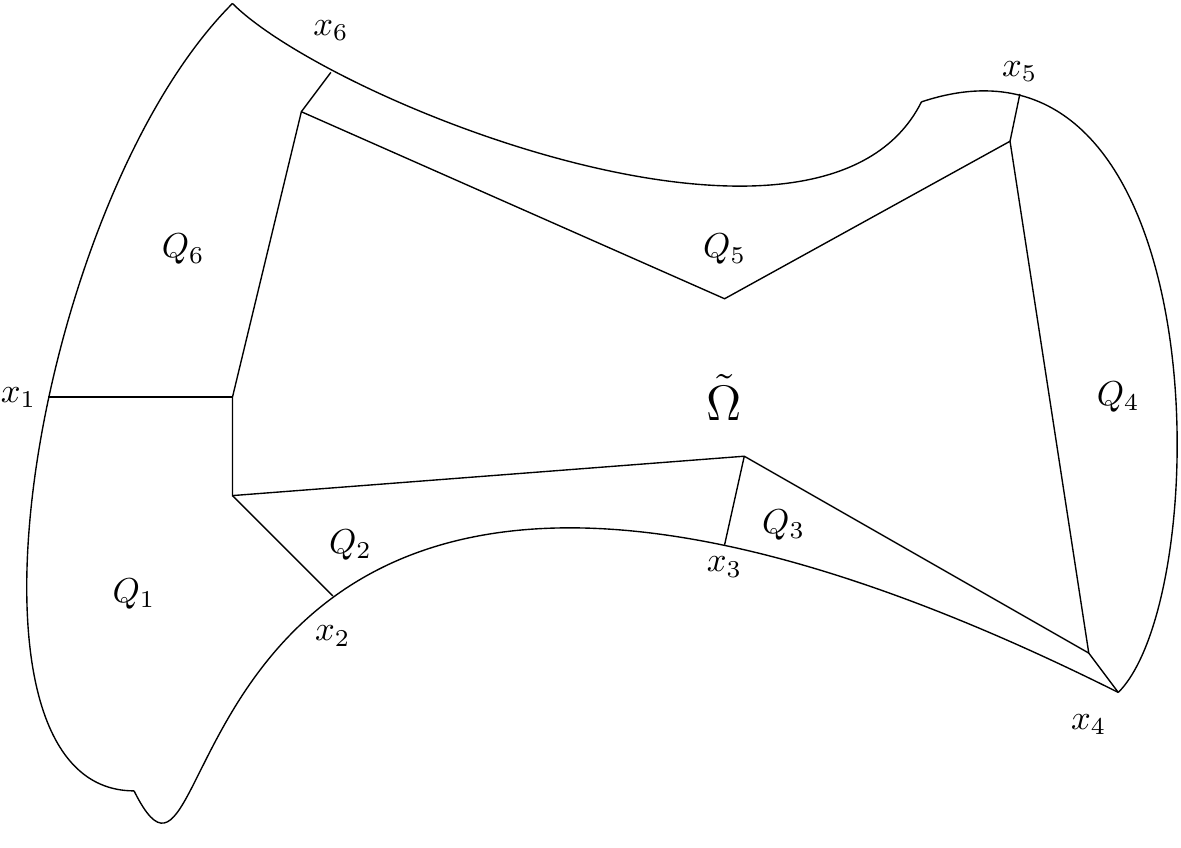}
\caption{The partition of $\Omega$ into subsets $Q_i$ and a polygonal set
  $\tilde \Omega$.\label{fig:omtil}}
\end{figure}

On  $\Omega\setminus\tilde\Omega$, we may define  $u\in BH(Q)$ almost everywhere
by setting
\[
u(x):= (g^\bot)^{(2)}(x_i)+ F_i\cdot (x-x_i)\quad \text{ if }x\in Q_i\,.
\] 
This makes $u$ affine on each subset $Q_i$, and we claim that there exists a
continuous extension to $\Omega\setminus \tilde\Omega$. Indeed, we need to check continuity only at the boundaries $\partial
Q_i\cap\partial Q_{i+1}=[x_{i+1},\bar x_{i+1}]=[x_{i+1},x_{i+1}\pm
\e_{i+1}v_{i+1}]$. For $x\in [x_{i+1},\bar x_{i+1}]$, we have
\[
\begin{split}
  \lim_{\substack {z\to x\\z\in Q_i}}u(z)-
  \lim_{\substack {z\to x\\z\in Q_{i+1}}}u(z)
  &=\left( u(x_{i+1})+F_i\cdot (x-x_{i+1})\right)-\left( u(x_{i+1})+F_{i+1}\cdot
    (x-x_{i+1})\right)\\
  &= (F_i-F_{i+1})\cdot (x-x_{i+1})\\
  &=-v_{i+1}^\bot\cdot \tilde \e \,v_{i+1}\quad\text{ for some
  }\tilde\e\in[-\e_{i+1},\e_{i+1}]\\
  &=0\,.
\end{split}
\]
This proves the existence of the continuous extension of $u$ to
$\Omega\setminus\tilde\Omega$. Let $T$ be a triangulation of $\tilde\Omega$, and extend $u$ to a function
that is affine on each triangle of $T$ and continuous on $\Omega$.  Since piecewise
affine continuous functions are of bounded Hessian,
we have $u\in BH(\Omega)$, and 
\[
\begin{split}
  \gamma_0(u)&=(g^\bot)^{(2)}\in \gamma_0(BH(\Omega))=\gamma_0(W^{2,1}(\Omega))\\
  \gamma_1(u)&= (g^\bot)^{(1)}\cdot n\in L^1(\partial\Omega)
\end{split}
\]
Hence $g\in X$, and the lemma is proved.
   \end{proof}

For  $\sigma\in C^0(\overline\Omega;\Rsym)$, we have that $\sigma\in \Sigma_g(\Omega)$ implies
$\sigma\cdot n=g$ on $\partial\Omega$. If additionally
$\sigma=\cof \nabla^2 u$ for some $u\in C^2(\overline{\Omega})$, then $g^\bot=(\cof \nabla^2u\cdot n)^\bot=-\nabla^2
u\cdot \tau=-\partial_\tau\nabla u$. This implies that the integral
$(g^\bot)^{(1)}$ is equal to $-\nabla u$ up to a  constant, and $(g^\bot)^{(2)}$ is
equal to $-u$ up to an affine function. The following lemma restates these observations
for the non-smooth case.

\begin{lemma}
\label{lem:gaffine}
Let  $g\in W^{-1,1}(\partial\Omega;\R^2)$, $\sigma \in \Sigma_g(\Omega)$, and let
  $U$ be a neighborhood of $\overline\Omega$, such that $u\in W^{1,1}(U)$,  and
  $D^2u\ecke \overline\Omega=\cof\sigma$. 
Then there exists $\zeta\in L^1(\partial\Omega)$ and an affine function $F$ such that
\[
\begin{split}
  (g^\bot)^{(1)}\cdot n &=-\gamma_1(u)-\zeta+\nabla F\cdot n\\
  (g^\bot)^{(2)}&=-\gamma_0(u)+F\,.
\end{split}
\]
The same conclusion holds true if $g\in H^{-1/2}(\partial\Omega;\R^2)$,
$\sigma\in L^2(\Omega;\Rsym)$, and $u\in H^2(\Omega)$ with $\sigma=\cof\nabla^2 u$.
\end{lemma}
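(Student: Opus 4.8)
The plan is to translate the assertion into a single integration‑by‑parts identity for the boundary pairing $\langle g,\varphi|_{\partial\Omega}\rangle$ with $\varphi\in C^1(\R^2;\R^2)$, and then to recover $(g^\bot)^{(1)}$ and $(g^\bot)^{(2)}$ by feeding into it the twice–integrated test functions built from $\Phi$. First I would record that, since $\Sigma_g(\Omega)\ni\sigma$, Lemma \ref{lem:balanced} gives that $g$ is balanced, so $(g^\bot)^{(1)},(g^\bot)^{(2)}$ are well defined; and that $D^2u\ecke\Omega=\cof\sigma\ecke\Omega$ is a finite measure, so $u|_\Omega\in BH(\Omega)$, whence by Theorem \ref{thm:traceop} the traces $\gamma_0(u)\in\gamma_0(W^{2,1}(\Omega))\subset W^{1,1}(\partial\Omega)$ and $\gamma_1(u)\in L^1(\partial\Omega)$ exist.

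For the identity, let $\varphi\in C^1(\R^2;\R^2)$. Using the definition of $\Sigma_g(\Omega)$, the relation $\sigma=\cof(D^2u\ecke\overline\Omega)$, and $\nabla\varphi:\cof M=\cof\nabla\varphi:M$, one has $\langle g,\varphi|_{\partial\Omega}\rangle=\int_{\overline\Omega}\nabla\varphi:\d\sigma=\int_{\overline\Omega}\cof\nabla\varphi:\d D^2u$. Splitting $D^2u\ecke\overline\Omega=D^2(u|_\Omega)+D^2u\ecke\partial\Omega$: on $\Omega$, the Gauss--Green formula for $\nabla u|_\Omega\in BV(\Omega;\R^2)$ against the continuous, distributionally divergence-free field $\cof\nabla\varphi$ (Piola identity; mollify $\varphi$ to make this rigorous) yields
\[
\int_\Omega\cof\nabla\varphi:\d D^2(u|_\Omega)=\int_{\partial\Omega}(\cof\nabla\varphi\cdot n)\cdot\gamma_0(\nabla u)\,\d\H^1=\int_{\partial\Omega}\partial_\tau\varphi\cdot\gamma_0(\nabla u)^\bot\,\d\H^1\,,
\]
the last step by the elementary identity $(\cof A\cdot n)^\bot=A\tau$ for $A\in\R^{2\times2}$ (so $\cof\nabla\varphi\cdot n=-(\partial_\tau\varphi)^\bot$ since $\nabla\varphi\cdot\tau=\partial_\tau\varphi$) together with $a^\bot\cdot b^\bot=a\cdot b$. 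For $D^2u\ecke\partial\Omega$ I would first note that $\int_{\overline\Omega}\nabla\psi:\d\sigma=\langle g,\psi|_{\partial\Omega}\rangle=0$ whenever $\psi|_{\partial\Omega}=0$, and that (cutting $\psi$ off away from $\partial\Omega$ and using $|\psi|\lesssim\dist(\cdot,\partial\Omega)$ on a shell of vanishing $|\sigma|$-mass) also $\int_\Omega\nabla\psi:\d(\sigma\ecke\Omega)=0$ for such $\psi$; hence $\int_{\partial\Omega}\nabla\psi:\d(\sigma\ecke\partial\Omega)=0$, and since $\nabla\psi|_{\partial\Omega}=(\partial_n\psi)\otimes n$ there with $\partial_n\psi$ arbitrary, $(\sigma\ecke\partial\Omega)\cdot n=0$. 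As $\sigma\ecke\partial\Omega$ is symmetric this forces $\sigma\ecke\partial\Omega=(\tau\otimes\tau)\,\nu$ for some $\nu\in\M(\partial\Omega)$, whence $D^2u\ecke\partial\Omega=\cof(\sigma\ecke\partial\Omega)=(n\otimes n)\,\nu$ and $\int_{\partial\Omega}\cof\nabla\varphi:\d D^2u=\int_{\partial\Omega}\nabla\varphi:\d(\sigma\ecke\partial\Omega)=\int_{\partial\Omega}(\partial_\tau\varphi\cdot\tau)\,\d\nu$. Collecting,
\[
\langle g,\varphi|_{\partial\Omega}\rangle=\int_{\partial\Omega}\partial_\tau\varphi\cdot\gamma_0(\nabla u)^\bot\,\d\H^1+\int_{\partial\Omega}(\partial_\tau\varphi\cdot\tau)\,\d\nu\,.
\]

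To read off the conclusions, fix $\chi\in L^\infty(\partial\Omega;\R^2)$ and apply this identity to a $\varphi$ with $\varphi|_{\partial\Omega}=\Psi(\chi)^\bot$, where $\Psi(\chi)\in W^{1,\infty}(\partial\Omega;\R^2)$ is the componentwise zero-mean primitive of $\chi-\fint\chi$ (so $\partial_\tau\Psi(\chi)=\chi-\fint\chi$); one must first extend the identity from $C^1(\R^2)$-restrictions to such $\varphi|_{\partial\Omega}$, which is exactly where $\nu\ll\H^1\ecke\partial\Omega$ (proved below) is used. Unwinding the definition of $\Phi$ gives $\langle(g^\bot)^{(1)},\chi\rangle=-\langle g^\bot,\Psi(\chi)\rangle=\langle g,\Psi(\chi)^\bot\rangle$, and then $a^\bot\cdot b^\bot=a\cdot b$ and $a^\bot\cdot\tau=a\cdot n$ identify $(g^\bot)^{(1)}$ with the $L^1$ field $\gamma_0(\nabla u)+h\,n-C$, where $h=\d\nu/\d\H^1$ and $C$ is a constant vector; in particular $(g^\bot)^{(1)}\cdot n=\gamma_1(u)+h-C\cdot n$. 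Dotting with $\tau$ gives $\tau\cdot(g^\bot)^{(1)}=\partial_\tau\gamma_0(u)-\tau\cdot C=\partial_\tau\big(\gamma_0(u)-C\cdot x\big)$, so one more application of $\Phi$ (with its zero-mean normalisation) yields $(g^\bot)^{(2)}=\gamma_0(u)-C\cdot x-d$ for a constant $d$. Matching these two formulas with the right-hand sides of the statement determines the affine function $F$ (from $C,d$) and $\zeta\in L^1(\partial\Omega)$; in the $H^{-1/2}$ case one works throughout with the Hilbert traces on $\partial\Omega$ and with $\sigma\in L^2(\Omega;\Rsym)$, which has $\sigma\ecke\partial\Omega=0$, so $\nu=0$ and $\zeta=0$.

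The hard part is the one step that really is the content of the lemma: that $\nu\ll\H^1\ecke\partial\Omega$ with density $h\in L^1(\partial\Omega)$, so that $(g^\bot)^{(1)}\cdot n$ — a priori only an element of $(L^\infty(\partial\Omega))'$ — is a genuine $L^1$ function and hence $\zeta\in L^1(\partial\Omega)$. This I would deduce from $u\in W^{1,1}(U)$: since $D^2u\ecke\overline\Omega$ is a finite measure, $\nabla u$ admits one-sided traces $\gamma^\pm(\nabla u)\in L^1(\partial\Omega;\R^2)$ on $\partial\Omega$ — the inner one is $\gamma_0(\nabla u|_\Omega)$, and the outer one exists because $\nabla u\in L^1(U)$ together with finiteness of $D^2u\ecke\overline\Omega$ rules out a singular jump across $\partial\Omega$ — so $D^2u\ecke\partial\Omega=\big(\gamma^+(\nabla u)-\gamma^-(\nabla u)\big)\otimes_{\sym}n\;\H^1\ecke\partial\Omega$; comparison with $(n\otimes n)\,\nu$ forces $\nu=h\,\H^1\ecke\partial\Omega$ with $h=\big(\gamma^+(\nabla u)-\gamma^-(\nabla u)\big)\cdot n\in L^1(\partial\Omega)$. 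Making this one-sided-trace argument rigorous, and checking consistency of the single affine $F$ between the two identities, are the only delicate points; the rest is the Piola identity, the rotation identities for $(\cdot)^\bot$, and bookkeeping with $\Phi$.
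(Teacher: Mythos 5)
Your proposal is correct in substance and follows essentially the same route as the paper's proof: you test the defining identity of $\Sigma_g(\Omega)$ with gradients of smooth functions, split $D^2u\ecke\overline\Omega$ into its interior part and the part concentrated on $\partial\Omega$, apply Gauss--Green for $BV$ in the interior to produce the trace of $\nabla u$, identify the boundary-concentrated part as $\zeta\,n\otimes n\,\H^1$ with $\zeta\in L^1(\partial\Omega)$, and then unwind $\Phi$ to recover $(g^\bot)^{(1)}$ up to a constant and $(g^\bot)^{(2)}$ up to an affine function. The only genuine differences are minor: you use vector test functions and the Piola identity where the paper argues componentwise with rotations, and you derive the $n\otimes n$ structure from $(\sigma\ecke\partial\Omega)\cdot n=0$ (a cutoff argument using the equation) combined with one-sided $BV$ traces, where the paper invokes Alberti's theorem together with the symmetry of $D^2u$; note that your trace/jump argument alone already yields both the direction and the $L^1$ density, so the cutoff step is redundant.

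Two caveats. First, your computation gives $(g^\bot)^{(1)}=\gamma_0(\nabla u)+h\,n-C$ and hence $(g^\bot)^{(2)}=\gamma_0(u)+\text{affine}$, i.e.\ the \emph{opposite} sign of $\gamma_0(u),\gamma_1(u)$ from the statement; the free $\zeta$ hides this in the first identity, but no affine $F$ can absorb $2\gamma_0(u)$ in the second, so your closing claim that the formulas ``match the right-hand sides of the statement'' is not literally true. A direct check (e.g.\ $\Omega$ the unit disk, $u=x_1^3/6$, $\sigma=\cof\nabla^2u$, $g=\sigma n$, for which $(g^\bot)^{(2)}=\gamma_0(u)-x_1/4$) shows that your signs are the ones consistent with the paper's stated conventions for $\tau$, $(\cdot)^\bot$ and $\Phi$, and the discrepancy can be traced to a dropped rotation in the paper's own chain of equalities; so this is a sign slip in the statement rather than a conceptual error on your side --- but you should have verified the match instead of asserting it. Second, when you extend the tested identity from restrictions of $C^1(\R^2)$ functions to the Lipschitz data $\Psi(\chi)^\bot$, absolute continuity of $\nu$ is not what does the work: the real point is approximating Lipschitz functions on $\partial\Omega$ by $C^1$ restrictions so that both the pairing with $g\in W^{-1,1}(\partial\Omega;\R^2)$ and the right-hand side pass to the limit; this density/continuity issue is left implicit in the paper as well, but attributing it to $\nu\ll\H^1$ is off.
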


\begin{proof}
To prove the first claim, we show that there exists $\zeta\in
L^1(\partial\Omega)$ and some vector $c\in\R^2$ such that
\begin{equation}
(g^\bot)^{(1)}=-\nabla u|_{\partial\Omega}-\zeta n+c\,,\label{eq:5}
\end{equation}
where the right hand side is understood in the sense of traces of $BV$
functions. To prove this claim, let $\varphi\in C^1(U)$. Then we have
\[
\begin{split}
\left<\partial_\tau\varphi,(g^\bot)^{(1)}\right>&=-  \left<\varphi,g^\bot\right>\\
&=- \int_{\overline\Omega} \left(\nabla\varphi\cdot\d\sigma\right)^\bot \\
  &=-\int_{\overline\Omega} \nabla\varphi\cdot\d(\cof D^2 u) \\
    &=\int_{\overline\Omega} (\nabla\varphi)^\bot\cdot\d(D^2 u)\,.
    \end{split}
\]
Let $\mu=\frac{\d(D^2 u)}{\d \H^1} \H^1\ecke\partial \Omega$ denote the
restriction of the jump part
of $D^2u$ to $\partial\Omega$.
Then we have
\[
D^2u\ecke\overline{\Omega}=D^2 u\ecke \Omega+\mu\,.
\]
Using  Theorem \ref{thm:alberti} and the symmetry of $D^2 u$, we have that $\mu$
is $\H^1$-almost everywhere parallel to $n\otimes n$, and we may write
$\mu=\zeta n\otimes n\H^1$. Hence,
\[
\int_{\overline\Omega} (\nabla\varphi)^\bot\cdot\d\mu
=-\int_{\partial\Omega }\zeta n \partial_\tau\varphi\d\H^1\,.
\]
By the Gauss' Theorem for $BV$ functions (see e.g.~\cite{MR0638362}), we have that
\[
\begin{split}
  \int_{\overline\Omega} (\nabla\varphi)^\bot\cdot\d(D^2 u\ecke \Omega)
  &=\int_{\partial\Omega}(\nabla u\otimes \nabla\varphi^\bot)\cdot n\d\H^1\\
  &=\int_{\partial\Omega}-\nabla u \partial_\tau \varphi\d\H^1\,.
\end{split}
\]
Hence, we have
\[
-\int_{\partial\Omega}(\nabla u+ \zeta n)\partial_\tau \varphi\d\H^1
=\int_{\partial\Omega}(g^\bot)^{(1)} \partial_\tau \varphi\d\H^1\quad\text{ for
  all }\varphi\in C^1(U)\,.
\]
This proves \eqref{eq:5} and hence the first claim. Next we
observe that
\[
\begin{split}
  \partial_\tau \left((g^\bot)^{(2)}+\gamma_0(u)\right)&= 
  \left((g^\bot)^{(1)}+\nabla u|_{\partial\Gamma}+\zeta n\right)\cdot \tau\\
  &=c\cdot \tau\,,
\end{split}
\]
which proves the second claim. Finally, the situation $u\in H^2(\Omega)$ is just
a special case of what we have just proved, by extending $u$ to some
 $\tilde u\in H^2(U)$, where $U$ is some neighborhood of $\overline\Omega$, and  $\tilde u|_{\Omega}=u$, which is possible
by Theorem \ref{thm:steinext}.
\end{proof}





\subsection{Statement of the main theorem}

First we will state a  proposition that is basically equivalent to our main theorem, using Airy potentials and the most general
boundary values that are allowed within this framework.

For $\lambda>0, \bar f=(\bar f_1,\bar f_2)\in H^{3/2}(\partial\Omega)\times H^{1/2}(\partial\Omega)$ let the functional
$\F_{\bar f,\lambda}:H^{2}(\Omega)\to \R$ be defined by
\[
\F_{\bar f,\lambda}(u)=\begin{cases}\lambda^{-1/2}\int_{\Omega}F_\lambda(\nabla^2 u)\d
  x&
  \text{ if }  \gamma_0(u)=\bar f_1\text{ and }\gamma_1( u) =\bar f_2\\
+\infty & \text{ else.}\end{cases}
\]
Furthermore, for $\bar f=(\bar f_1,\bar f_2)\in \gamma_0(W^{2,1}(\Omega))\times L^1(\partial\Omega)$, let the functional
$\F_{\bar f}:BH(\Omega)\to \R$ be defined by
\[
\F_{\bar f}(u)=\begin{cases}2\rho^0(D^2
  u)(\Omega)+2\int_{\partial\Omega}|\gamma_1(u)-\bar f_2|\d \H^1&
  \text{ if }  \gamma_0(u)=\bar f_1\\
+\infty & \text{ else.}\end{cases}
\]

In the statement of the following theorem, we use the standard norm on the
Cartesian product 
$H^{3/2}(\partial\Omega)\times H^{1/2}(\partial\Omega)$, i.e.
\[
\|(\bar f_1,\bar f_2)\|_{H^{3/2}(\partial\Omega)\times H^{1/2}(\partial\Omega)}:=
\|\bar f_1\|_{H^{3/2}(\partial\Omega)}+\|\bar f_2\|_{H^{1/2}(\partial\Omega)}\,.
\]

\begin{proposition}
 Let $\Omega\subset\R^2$  satisfy Definition \ref{def:H1}, and assume that
\newcommand{\fl}{\bar f_\lambda}
\newcommand{\hl}{\bar h_\lambda}
\begin{equation}
\begin{split}
  \fl&\in H^{3/2}(\partial\Omega)\times H^{1/2}(\partial\Omega)\\
 \bar f&\in  \gamma_0(W^{2,1}(\Omega))\times L^1(\partial\Omega)\\
\fl&\to \bar f \text{ weakly in }\gamma_0(W^{2,1}(\Omega))\times L^1(\partial\Omega)\\
 \frac{\|\fl\|_{H^{3/2}(\partial\Omega)\times H^{1/2}(\partial\Omega)}
}{\lambda^{1/4}}&\to
  0
\text{ as }\lambda\to \infty\,.
\end{split}\label{eq:25}
\end{equation}
\label{prop:gammaairy}
  \begin{itemize}
  \item[(i)] \emph{Compactness:} Let $\{u_\lambda\}_\lambda\subset H^{2}(\Omega)$ be
  such that $\F_{\fl,\lambda}(u_\lambda)<C$. Then there exists a subsequence (no
  relabeling) and $u\in BH(\Omega)$ such that $u_\lambda\to  u$ weakly
  * in $BH(\Omega)$.
\item[(ii)] \emph{Lower bound:} If $u_\lambda\to u$ weakly * in
  $BH(\Omega)$,  then
  \begin{equation}
  \liminf_{\lambda\to\infty}\F_{\fl,\lambda}( u_\lambda)\geq \F_{\bar f}( u)\,.\label{eq:26}
  \end{equation}

\item[(iii)] \emph{Upper bound:} For
  every $u\in BH(\Omega)$ there exists a sequence
  $\{ u_\lambda\}_\lambda\subset H^{2}(\Omega)$ such that $ u_\lambda\to  u$
  weakly * in $BH(\Omega)$
   and $\lim_{\lambda\to\infty}\F_{\fl,\lambda}(u_\lambda)=\F_{\bar f}( u)$.
  \end{itemize}
\end{proposition}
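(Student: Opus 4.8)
The plan is to transfer everything to the $2$-quasiconvexified integrand $Q_2F_\lambda$ of Theorem~\ref{thm:Q2F}, exploiting its scaling: for $\rho^0(\xi)\le\sqrt\lambda$ one has $\lambda^{-1/2}Q_2F_\lambda(\xi)=2\rho^0(\xi)-2\lambda^{-1/2}|\det\xi|$, while in general, writing $s=\rho^0(\xi)$, $\lambda^{-1/2}Q_2F_\lambda(\xi)\ge 2s-2\lambda^{-1/2}|\det\xi|=2\rho^0(\xi)-2\lambda^{-1/2}|\det\xi|$ (the difference in the other regime being the square $(\lambda^{-1/4}s-\lambda^{1/4})^2\ge0$). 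Two consequences are decisive: for every \emph{fixed} $\xi$ one has $\lambda^{-1/2}Q_2F_\lambda(\xi)\to 2\rho^0(\xi)$ as $\lambda\to\infty$, and the determinant defect disappears identically on rank-one matrices since $\det(\eta\otimes\nu)=0$. For \emph{compactness} (i), the definition of $F_\lambda$ turns $\F_{\bar f_\lambda,\lambda}(u_\lambda)<C$ into $\int_\Omega|\nabla^2u_\lambda|^2\,dx\le C\sqrt\lambda$ together with $\L^2(\{\nabla^2u_\lambda\neq0\})\le C/\sqrt\lambda$; Cauchy--Schwarz then gives a uniform bound $\|\nabla^2u_\lambda\|_{L^1(\Omega)}\le C$. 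Since $\bar f_\lambda\to\bar f$ is bounded in $\gamma_0(W^{2,1}(\Omega))\times L^1(\partial\Omega)\subset W^{1,1}(\partial\Omega)\times L^1(\partial\Omega)$, Lemma~\ref{lem:BHpoincare} bounds $\|u_\lambda\|_{BH(\Omega)}$, and Theorem~\ref{thm:BHcompact} produces the weakly-$*$ convergent subsequence. The accompanying $W^{1,1}$-convergence and $\bar f_{\lambda,1}\to\bar f_1$ in $L^1(\partial\Omega)$ force $\gamma_0(u)=\bar f_1$, so the right-hand side of \eqref{eq:26} is finite only through the genuine boundary defect $\gamma_1(u)-\bar f_2$.

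For the \emph{lower bound} (ii) I use the Fonseca--M\"uller blow-up. Because $F_\lambda\ge Q_2F_\lambda$ pointwise, it is enough to bound $\liminf_\lambda\lambda^{-1/2}\int_\Omega Q_2F_\lambda(\nabla^2u_\lambda)\,dx$ from below by $\F_{\bar f}(u)$; passing to a subsequence, the nonnegative energy measures $\mu_\lambda:=\lambda^{-1/2}Q_2F_\lambda(\nabla^2u_\lambda)\,\L^2$ converge weakly-$*$ in $\M(\overline\Omega)$ to some $\mu$, and the claim reduces to $\mu(\overline\Omega)\ge\F_{\bar f}(u)$, which follows from three density estimates obtained by blowing up along cubes/half-balls shrinking to a fixed point and then sending $\lambda\to\infty$. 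First, at $\L^2$-a.e.\ $x_0\in\Omega$: comparing $u_\lambda$ on $Q(x_0,\rho)$ with the quadratic $\tfrac12\nabla^2u(x_0)(x-x_0)\cdot(x-x_0)$ after a De~Giorgi-type boundary correction (cheap because of the Sobolev differentiability \eqref{eq:47} and the at most quadratic growth of $Q_2F_\lambda$) and invoking $2$-quasiconvexity of $Q_2F_\lambda$ gives $\frac{d\mu}{d\L^2}(x_0)\ge\lim_\lambda\lambda^{-1/2}Q_2F_\lambda(\nabla^2u(x_0))=2\rho^0(\nabla^2u(x_0))$. Second, at $|D^s(\nabla u)|$-a.e.\ $x_0$: Alberti's Theorem~\ref{thm:alberti} and the symmetry of $D^2u$ force the polar to be $\pm\nu\otimes\nu$, Theorem~\ref{thm:BVblow} identifies the blow-up of $\nabla u$ as one-dimensional, and since $Q_2F_\lambda(t\,\nu\otimes\nu)=2\sqrt\lambda\,|t|$ for $|t|\le\sqrt\lambda$ (no determinant defect) one gets $\frac{d\mu}{d|D^s(\nabla u)|}(x_0)\ge 2=2\rho^0(\pm\nu\otimes\nu)$. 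Third, at $\H^1\ecke\partial\Omega$-a.e.\ $x_0$: a half-ball blow-up in which $u_\lambda$ has normal derivative converging to $\bar f_2(x_0)$ on the flattened boundary and interior normal trace $\gamma_1(u)(x_0)$ forces a transition layer whose least cost, again because the relevant direction $n\otimes n$ is rank-one, is $2|\gamma_1(u)(x_0)-\bar f_2(x_0)|$. Adding these contributions and using $\rho^0(D^2u)(\Omega)=\int_\Omega\rho^0(\nabla^2u)\,dx+|D^s(\nabla u)|(\Omega)$ yields $\mu(\overline\Omega)\ge\F_{\bar f}(u)$.

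For the \emph{upper bound} (iii), the lower semicontinuity of $\F_{\bar f}$ coming from part (ii) together with a diagonal argument reduces the construction to $u$ in a dense subclass, e.g.\ $u\in W^{2,\infty}(\Omega)$, so that $D^2u$ carries no interior singular part. On a fine cubic grid on which $\nabla^2u$ is essentially constant $=\Lambda$ on each cube $Q$, the defining formula \eqref{eq:23} for $Q_2F_\lambda$ supplies $\varphi_{Q,\lambda}\in W^{2,\infty}_0(Q)$ with $\int_QF_\lambda(\Lambda+\nabla^2\varphi_{Q,\lambda})\,dx\le|Q|(Q_2F_\lambda(\Lambda)+\epsilon)$; gluing across cube faces (the corrections vanish to first order on each $\partial Q$, so the result stays in $H^2$) produces $v_\lambda$ with $\lambda^{-1/2}\int_\Omega F_\lambda(\nabla^2v_\lambda)\,dx\to2\rho^0(D^2u)(\Omega)$ and $v_\lambda\rightharpoonup u$ weakly-$*$ in $BH$ (the microstructure on the grid averages out). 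It remains to install the boundary data: in a collar of $\partial\Omega$ of width $\delta_\lambda\to0$ I interpolate, using the continuous right inverses of the trace operator from Theorem~\ref{thm:traceop}(ii), between $(\bar f_{\lambda,1},\bar f_{\lambda,2})$ on $\partial\Omega$ and $(\gamma_0(u),\gamma_1(u))$ on the inner collar boundary, inserting there the microstructure that realizes the recession behaviour of $Q_2F_\lambda$ in the rank-one direction $n\otimes n$, of asymptotic cost $2|\gamma_1(u)-\bar f_2|$ per unit length; the extra contribution of the outermost layer carrying $\bar f_\lambda$ is $\lesssim\lambda^{-1/2}(\|\bar f_\lambda\|_{H^{3/2}(\partial\Omega)\times H^{1/2}(\partial\Omega)}^2+\lambda\,\delta_\lambda)$, which tends to $0$ by the last line of \eqref{eq:25} and a suitable choice of $\delta_\lambda$. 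Diagonalising in $\lambda$, $\epsilon$, the grid size and $\delta_\lambda$ finishes the construction; Theorem~\ref{thm:relax2} can be used to legitimize the reduction to $W^{2,p}$ competitors at intermediate steps.

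I expect the main obstacle to be the lower bound, and within it the boundary blow-up that yields the term $2\int_{\partial\Omega}|\gamma_1(u)-\bar f_2|\,d\H^1$, together with the bookkeeping required to run the spatial blow-up and the limit $\lambda\to\infty$ simultaneously — in particular checking that the determinant defect $\lambda^{-1/2}|\det\nabla^2u_\lambda|$ is harmless because in each blow-up it is evaluated at the frozen density matrix (interior, rank-one, or boundary) where it vanishes in the limit. The boundary-layer construction in the upper bound is the other delicate point, since it must match the $\Gamma$-convergent data $\bar f_\lambda$ at a cost controlled precisely by the $\lambda^{1/4}$ normalization in \eqref{eq:25}.
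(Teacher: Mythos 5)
Your compactness argument and your interior blow-up are essentially the paper's proof (the compactness step via Cauchy--Schwarz on the set $\{\nabla^2u_\lambda\neq0\}$ is even a bit more elementary than the paper's bound $\rho^0\lesssim G_\lambda$), but the genuinely hard part of the statement is the boundary term $2\int_{\partial\Omega}|\gamma_1(u)-\bar f_2|\,\d\H^1$, and that is exactly where your proposal has gaps. In the lower bound you postulate a half-ball blow-up at $\H^1$-a.e.\ $x_0\in\partial\Omega$ ``in which $u_\lambda$ has normal derivative converging to $\bar f_2(x_0)$''; but \eqref{eq:25} only gives $\bar f_{\lambda,2}\rightharpoonup\bar f_2$ weakly in $L^1(\partial\Omega)$, so no pointwise or locally strong convergence of the boundary datum at the blow-up point is available, and nothing in your sketch controls the joint limit (blow-up scale together with $\lambda\to\infty$) at the boundary. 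The paper sidesteps this entirely: it extends $u_\lambda$ across $\partial\Omega$ by the lift $f_\lambda=E\circ(\gamma_0,\gamma_1)^{-1}\bar f_\lambda$, shows that the exterior collar energy vanishes (equiintegrability of $|\nabla^2f_\lambda|$ from the weak $W^{2,1}$-convergence plus the $\lambda^{-1/4}$ assumption), and then the boundary defect appears as the singular part of $D(\nabla\tilde u_\lambda)$'s limit concentrated on $\partial\Omega$, so the single density estimate $\zeta\geq2$ at $|D^sv|$-a.e.\ point of $\overline\Omega$ (Theorem \ref{thm:BVblow} together with Lemma \ref{lem:affinebc}(ii)) handles interior jumps and the boundary term simultaneously. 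Without this (or an equivalent) device, your third density estimate does not go through as sketched.

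The upper bound has two further gaps. First, the reduction ``by lower semicontinuity and a diagonal argument'' to $u\in W^{2,\infty}(\Omega)$ requires approximants $u_k$ with the \emph{same} trace $\gamma_0(u_k)=\bar f_1$ and with convergence of the full energy $2\rho^0(D^2u_k)(\Omega)+2\int_{\partial\Omega}|\gamma_1(u_k)-\bar f_2|\,\d\H^1$, i.e.\ a strict-type approximation in $BH$ with prescribed $\gamma_0$-trace and control of the $\gamma_1$-traces; this is a nontrivial unproved step (the paper never needs it: it regularizes $u-f$ directly via the shrinking diffeomorphism $\Theta_\e$ and mollification, and passes from $Q_2F_\lambda$ to $F_\lambda$ by Theorem \ref{thm:relax2}). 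Second, your collar estimate $\lambda^{-1/2}\|\bar f_\lambda\|^2_{H^{3/2}(\partial\Omega)\times H^{1/2}(\partial\Omega)}+\lambda^{1/2}\delta_\lambda$ accounts only for the layer carrying $\bar f_\lambda$, not for the matching between that layer and the interior construction: the mismatch $(\bar f_{\lambda,1}-\gamma_0(u),\bar f_{\lambda,2}-\gamma_1(u))$ is small only in the weak topology of $\gamma_0(W^{2,1}(\Omega))\times L^1(\partial\Omega)$, so interpolating it across a collar of width $\delta_\lambda$ produces second derivatives of order (mismatch)$/\delta_\lambda^2$ and (mismatch)$/\delta_\lambda$ whose $F_\lambda$-cost is not controlled by \eqref{eq:25}. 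The paper's mechanism --- add the lift $f_\lambda$ globally, let the remainder $\tilde v_{\e(\lambda)}$ vanish together with its gradient on $\partial\Omega$ and satisfy $\|\nabla^2\tilde v_{\e(\lambda)}\|_{L^\infty}<\sqrt\lambda/4$, so that the only dangerous set is $\{|\nabla^2f_\lambda|>\sqrt\lambda/4\}$, with cost $\lesssim\lambda^{-1/2}\|\nabla^2f_\lambda\|^2_{L^2}\to0$, and recover the boundary term as the jump of $\nabla\tilde u$ across $\partial\Omega$ via Theorem \ref{thm:homomeas} --- is precisely what replaces your unjustified matching step; you would need to adopt it, or supply the missing estimate, to close the construction.
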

  
With the proposition at hand, we can prove our main theorem, which contains Theorem
\ref{thm:gammabdry} as a special case.

\begin{theorem}
Let $\Omega\subset\R^2$  satisfy Definition \ref{def:H1}, and  assume that
\label{thm:main}
 \begin{equation}
\begin{split}
  g_\lambda&\in H^{-1/2}(\partial\Omega;\R^2)\\
g\in W^{-1,1}(\partial\Omega;\R^2),\quad(g^\bot)^{(2)}&\in \gamma_0(W^{2,1}(\Omega))\\
g_\lambda^\bot&\to g^\bot \text{ weakly in }X\\
   \lambda^{-1/2}\|\ul\|^2_{H^{-1/2}(\partial\Omega;\R^2)}&\to
  0\text{ as }\lambda\to \infty\,.
\end{split}\label{eq:42}
\end{equation}
  \begin{itemize}
  \item[(i)] \emph{Compactness:} Let $\{\sigma_\lambda\}_\lambda\subset L^2(\Omega;\Rsym)$ be
  such that $\G_{g_\lambda,\lambda}(\sigma_\lambda)<C$. Then there exists a subsequence (no
  relabeling) and $\sigma\in \M(\Omega;\Rsym)$ such that $\sigma_\lambda\to \sigma$ weakly * in
  the sense of measures. 
\item[(ii)] \emph{Lower bound:} If $\sigma_\lambda\to \sigma$ weakly * in the sense of
  measures, then
  \begin{equation}
  \liminf_{\lambda\to\infty}\G_{g_\lambda,\lambda}(\sigma_\lambda)\geq \G_{g}(\sigma)\,.\label{eq:40}
  \end{equation}

\item[(iii)] \emph{Upper bound:} Assume that $\Omega$ contracts nicely. For every $\sigma\in\M(\Omega;\Rsym)$ there exists a sequence
  $\{\sigma_\lambda\}_\lambda\subset L^2(\Omega;\Rsym)$ such that $\sigma_\lambda\to \sigma$
  weakly * in the sense of measures and $\lim_{\lambda\to\infty}\G_{g_\lambda,\lambda}(\sigma_\lambda)=\G_{g}(\sigma)$.
  \end{itemize}

\end{theorem}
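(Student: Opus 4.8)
The plan is to deduce Theorem \ref{thm:main} from Proposition \ref{prop:gammaairy} by the Airy-potential correspondence established in Lemma \ref{lem:airylem} and Lemma \ref{lem:gaffine}. The dictionary is as follows: to a stress $\sigma$ (in $L^2$ or in $\M(\overline\Omega;\Rsym)$) with $\div\sigma=0$ we associate, via $\nabla^2 u=\cof\sigma$ (resp.\ $D^2 u\ecke\overline\Omega=\cof\sigma$), an Airy potential $u\in H^2(\Omega)$ (resp.\ $u\in BH(\Omega)$), which is unique up to affine functions. Since $\cof$ is a linear isometry-up-to-a-factor on $\Rsym$ and $\rho^0(\cof\sigma)=\rho^0(\sigma)$ (the eigenvalues of $\cof\sigma$ in $2\times 2$ are those of $\sigma$ swapped), the integrands match: $F_\lambda(\nabla^2 u)=F_\lambda(\sigma)$ and $\rho^0(D^2u)(\overline\Omega)$ vs.\ $\rho^0(\sigma)(\overline\Omega)$ agree. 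The boundary condition $\sigma\cdot n=g_\lambda$ translates, by the smooth computation preceding Lemma \ref{lem:gaffine} and its non-smooth version, into $\gamma_0(u)=-(g_\lambda^\bot)^{(2)}+F_\lambda$ and $\gamma_1(u)=-(g_\lambda^\bot)^{(1)}\cdot n+\nabla F_\lambda\cdot n$ for a suitable affine $F_\lambda$; thus setting $\bar f_{1,\lambda}=-(g_\lambda^\bot)^{(2)}$ and $\bar f_{2,\lambda}=-(g_\lambda^\bot)^{(1)}\cdot n$ (after normalizing away the affine ambiguity, which does not affect the Hessian or the functional values), the hypothesis $g_\lambda^\bot\to g^\bot$ weakly in $X$ is precisely \eqref{eq:25}, and the norm decay $\lambda^{-1/2}\|g_\lambda\|^2_{H^{-1/2}}\to 0$ gives the last line of \eqref{eq:25} via the continuity of $(g^\bot)^{(1)},(g^\bot)^{(2)}$ as functions of $g$ together with the trace estimates of Theorem \ref{thm:traceop}.

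With this correspondence in place, each of the three parts transfers. For compactness (i): given $\sigma_\lambda$ with $\G_{g_\lambda,\lambda}(\sigma_\lambda)<C$, the associated $u_\lambda$ satisfy $\F_{\bar f_\lambda,\lambda}(u_\lambda)<C$, so by Proposition \ref{prop:gammaairy}(i) a subsequence converges weakly $*$ in $BH(\Omega)$ to some $u$; then $\sigma_\lambda=\cof\nabla^2 u_\lambda\to\cof D^2 u=:\sigma\in\M(\Omega;\Rsym)$ weakly $*$ in the sense of measures, because $\nabla^2 u_\lambda=D^2 u_\lambda\to D^2 u$ weakly $*$ and $\cof$ is linear and continuous. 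For the lower bound (ii): if $\sigma_\lambda\to\sigma$ weakly $*$, the potentials $u_\lambda$ (normalized so that $\int_\Omega u_\lambda=\int_\Omega\nabla u_\lambda=0$) are bounded in $BH$ by Lemma \ref{lem:BHpoincare} on the subsequence where the liminf is achieved (if the liminf is $+\infty$ there is nothing to prove), converge weakly $*$ to the Airy potential $u$ of $\sigma$, and Proposition \ref{prop:gammaairy}(ii) gives $\liminf\F_{\bar f_\lambda,\lambda}(u_\lambda)\geq\F_{\bar f}(u)$; reading this back through the dictionary yields $\liminf\G_{g_\lambda,\lambda}(\sigma_\lambda)\geq 2\rho^0(\sigma)(\overline\Omega)+2\int_{\partial\Omega}|\gamma_1(u)-\bar f_2|\ud\H^1$, and the extra boundary term is exactly the defect of the constraint $-\div\sigma=g\H^1\ecke\partial\Omega$, i.e.\ it is $0$ precisely when $\sigma\in\Sigma_g(\Omega)$ and $+\infty$ otherwise (after noting that a nonzero value of $\gamma_1(u)-\bar f_2$ on a set of positive $\H^1$-measure corresponds to an additional boundary measure in $-\div\sigma$); hence the right-hand side equals $\G_g(\sigma)$. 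For the upper bound (iii): given $\sigma\in\M(\Omega;\Rsym)$, take its Airy potential $u\in BH(\Omega)$ (via Lemma \ref{lem:airylem}), apply Proposition \ref{prop:gammaairy}(iii) to obtain $u_\lambda\in H^2(\Omega)$ with $u_\lambda\to u$ weakly $*$ in $BH$ and $\F_{\bar f_\lambda,\lambda}(u_\lambda)\to\F_{\bar f}(u)$, and set $\sigma_\lambda:=\cof\nabla^2 u_\lambda\in L^2(\Omega;\Rsym)$; these satisfy $\div\sigma_\lambda=0$, $\sigma_\lambda\cdot n=g_\lambda$ by construction of $\bar f_\lambda$, converge weakly $*$ to $\sigma$, and $\G_{g_\lambda,\lambda}(\sigma_\lambda)\to\G_g(\sigma)$ provided $\sigma\in\Sigma_g(\Omega)$ (so that $\F_{\bar f}(u)=2\rho^0(\sigma)(\overline\Omega)$); if $\sigma\notin\Sigma_g(\Omega)$ then $\G_g(\sigma)=+\infty$ and there is nothing to construct.

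The main obstacle is the careful handling of the boundary: specifically, verifying that under the hypotheses \eqref{eq:42} the translated data $\bar f_\lambda$ actually satisfy \eqref{eq:25} (both the weak convergence in $\gamma_0(W^{2,1}(\Omega))\times L^1(\partial\Omega)$ and the $\lambda^{1/4}$-growth bound, which requires tracking the affine pieces $F_\lambda$ and the constant $\zeta\in L^1(\partial\Omega)$ from Lemma \ref{lem:gaffine} and showing they do not spoil the estimates), and, in the lower bound, the identification of $2\int_{\partial\Omega}|\gamma_1(u)-\bar f_2|\ud\H^1$ with the constraint-violation penalty that makes $\F_{\bar f}$ become $\G_g$ — i.e.\ showing that $\gamma_0(u)=\bar f_1$ together with $\gamma_1(u)=\bar f_2$ is equivalent to $-\div\sigma=g\H^1\ecke\partial\Omega$. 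Everything else is a routine, if somewhat lengthy, unwinding of the Airy-potential correspondence. (The clause ``Assume that $\Omega$ contracts nicely'' in (iii) is used only to guarantee the approximation/extension step inside the proof of Proposition \ref{prop:gammaairy}(iii), so no further work is needed here.)
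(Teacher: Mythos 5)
Your route is the paper's route: set $\bar f_\lambda=-((g_\lambda^\bot)^{(2)},(g_\lambda^\bot)^{(1)}\cdot n)$, observe that \eqref{eq:42} yields \eqref{eq:25} (via the definition of the topology on $X$ and the mapping properties of the integration operators), and transfer compactness, lower bound and upper bound from Proposition \ref{prop:gammaairy} through the Airy-potential dictionary of Lemmas \ref{lem:airylem} and \ref{lem:gaffine}, using linearity of $\cof$ and $\rho^0(\cof\xi)=\rho^0(\xi)$, $|\cof\xi|=|\xi|$. Parts (i) and (iii) of your plan coincide with the paper's proof.

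There is, however, a genuine slip in your part (ii), precisely at the point you yourself flag as the main obstacle: you claim that the boundary term $2\int_{\partial\Omega}|\gamma_1(u)-\bar f_2|\d\H^1$ in $\F_{\bar f}(u)$ ``is $0$ precisely when $\sigma\in\Sigma_g(\Omega)$ and $+\infty$ otherwise.'' It is never $+\infty$ (it is the integral of an $L^1(\partial\Omega)$ function), and it is not a constraint indicator. By Lemma \ref{lem:gaffine}, after removing the affine ambiguity one has $\gamma_1(u)-\bar f_2=-\zeta$, where $\zeta\,n\otimes n\,\H^1\ecke\partial\Omega$ is the jump part of $D^2u$ on $\partial\Omega$; equivalently, $\zeta\,\tau\otimes\tau\,\H^1\ecke\partial\Omega$ is the part of $\sigma\in\Sigma_g(\overline\Omega)$ concentrated on the boundary. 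Since $\rho^0(\tau\otimes\tau)=1$, the boundary integral is exactly the $2\rho^0$-mass that $\sigma$ carries on $\partial\Omega$, so that $\F_{\bar f}(u)=2\rho^0(D^2u)(\Omega)+2\int_{\partial\Omega}|\zeta|\d\H^1=2\rho^0(\sigma)(\overline\Omega)=\G_g(\sigma)$; a stress in $\Sigma_g$ with nonzero boundary concentration (a ``boundary bar'' of the truss) makes this term strictly positive and finite. With your $0/\infty$ reading the asserted identity of the transferred right-hand side with $\G_g(\sigma)$ does not follow, and it is also inconsistent with your own (correct) use of $\F_{\bar f}(u)=2\rho^0(\sigma)(\overline\Omega)$ in part (iii). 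The paper avoids the issue by normalizing $u_\lambda$ and $u$ through Lemma \ref{lem:gaffine} so that $\G_{g_\lambda,\lambda}(\sigma_\lambda)=\F_{\bar f_\lambda,\lambda}(u_\lambda)$ and $\G_g(\sigma)=\F_{\bar f}(u)$ hold as identities, and then quotes Proposition \ref{prop:gammaairy}(ii); apart from this identification, your argument matches the paper's.
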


\begin{remark}
  The reason for the assumption
  $\lambda^{-1/2}\|g_\lambda\|_{H^{-1/2}(\partial\Omega;\R^2)}^2\to 0$ here, and
  for the analogous assumption
  $\lambda^{-1/4}\|\bar f_\lambda \|_{H^{3/2}(\partial\Omega)\times
    H^{1/2}(\partial\Omega)}\to 0$ in the Proposition, is  technical. This allows us
  to control the behavior of boundary layers in the upper bound construction,
  and gives a convenient estimate for error terms appearing in the proof of the
  lower bound, see the proof of Proposition \ref{prop:gammaairy}. However, these
  assumptions are not a restriction, in the sense that every $g\in W^{-1,1}(\partial\Omega;\R^2)$ with $(g^\bot)^{(2)}\in
  \gamma_0(W^{2,1}(\Omega))$ possesses an approximating sequence $g_\lambda$ with
  these properties.
\end{remark}

\begin{proof}
Recall the definition of the
``integrals'' $(g^\bot)^{(1)},(g^\bot)^{(2)}$ from \eqref{eq:35}.
We may assume that $g_\lambda, g$ are balanced, since otherwise
$S_{g_\lambda}=\emptyset$ or $\Sigma_g=\emptyset$ by Lemma
\ref{lem:balanced}. By Remark \ref{rem:balanced} (i), the requirements for the
existence of $(g^\bot)^{(1)},(g^\bot)^{(2)}$, $(g_\lambda^\bot)^{(1)},(g_\lambda^\bot)^{(2)}$ are met.
Now we set
\begin{equation}
  \begin{split}
    \bar f_\lambda:=&-((g_\lambda^\bot)^{(2)},(g_\lambda^\bot)^{(1)}\cdot n)\\
    \bar f:=&-((g_\lambda^\bot)^{(2)},(g_\lambda^\bot)^{(1)}\cdot n)\,.
  \end{split}\label{eq:31}
\end{equation}
With these definitions, \eqref{eq:42} implies \eqref{eq:25}. 

For the compactness part, assume that
$\G_{g_\lambda,\lambda}(\sigma_\lambda)<C$. Using Lemma \ref{lem:airylem}, we
obtain a sequence $u_\lambda$ in $H^2(\Omega)$ with
$\G_{g_\lambda,\lambda}(\sigma_\lambda)=\F_{\bar
  f_\lambda,\lambda}(u_\lambda)$. By Proposition \ref{prop:gammaairy}, we obtain weak
* convergence of $\sigma_\lambda=\cof D^2 u_\lambda$ to $\cof D^2 u$.

For the lower bound part, we assume $\{\sigma_\lambda\}_\lambda\subset
L^2(\Omega;\Rsym)$ with $\sigma_\lambda\to \sigma$ weakly * in
$\M(\Omega;\Rsym)$ and $\lim_{\lambda\to \infty}\G_{g_\lambda,\lambda}(\sigma_\lambda)=M<\infty$. We may assume $-\div\sigma_\lambda=g_\lambda
\H^1\ecke\partial\Omega$, since otherwise
$\G_{g_\lambda,\lambda}(\sigma_\lambda)=+\infty$. Using Lemma \ref{lem:airylem},
let $u_\lambda\in H^{2}(\Omega)$ and $u\in BH(\Omega)$ such that $\cof
\nabla^2u_\lambda=\sigma_\lambda$ and $\cof D^2 u=\sigma$. By Lemma
\ref{lem:gaffine}, we may assume that by the addition of 
suitable affine functions, we have 
\[
\begin{split}
  \gamma_0(u_\lambda)= -(g_\lambda^\bot)^{(2)},\quad
  \gamma_1(u_\lambda)= -(g_\lambda^\bot)^{(1)}\cdot n\,,\\
  \gamma_0(u)=-( g^\bot)^{(2)},\quad \gamma_1(u)=-( g^\bot)^{(1)}\cdot n\,.
\end{split}
\]
This implies $\G_{g_\lambda,\lambda}(\sigma_\lambda)=\F_{\bar
  f_\lambda,\lambda}(u_\lambda)$ and $\G_{g}(\sigma)=\F_{\bar
  f}(u_\lambda)$, and hence the lower bound follows from the lower bound part of
Proposition \ref{prop:gammaairy}.

For the upper bound part, let $\sigma\in \Sigma_g(\Omega)$. Applying
Lemma \ref{lem:airylem} and Lemma \ref{lem:gaffine} we have that after the
addition of an affine function, $u\in
BH(\Omega)$ with $(\gamma_0(u),\gamma_1(u))=\bar f$. The  upper bound part of
Proposition \ref{prop:gammaairy} and \eqref{eq:31}
immediately yield the desired recovery sequence.
\end{proof}

The rest of this paper is concerned with the proof of Proposition
\ref{prop:gammaairy}. 




\section{Proof of compactness and upper bound}
\label{sec:proof-comp-upper}
In this section and the next, we use the notation
\[
G_\lambda(\xi)=\lambda^{-1/2}Q_2F_\lambda(\xi)\,.
\]
By Theorem \ref{thm:Q2F}, we have
\[
G_\lambda(\xi)=\begin{cases}2\left(\rho^0(\xi)-\lambda^{-1/2}|\det\xi|\right)&\text{ if
  }\rho^0(\xi)\leq \sqrt{\lambda}\\ \lambda^{1/2}+\lambda^{-1/2}|\xi|^2& \text{else. }\end{cases}
\]
\begin{proof}[Proof of compactness in Proposition \ref{prop:gammaairy}]
We claim that for $\xi\in \Rsym$, 
\begin{equation}
  \label{eq:3}
  \frac12\rho^0(\xi)\leq G_\lambda(\xi)\,.
\end{equation}
By $|\xi|\leq \rho^0(\xi)$, this implies 
\begin{equation}
\frac12|D^2 u_\lambda|(\Omega)=\frac12\|\nabla^2 u_\lambda\|_{L^1(\Omega)}\leq \G_\lambda(u_\lambda)\,.
\end{equation}
By Lemma \ref{lem:BHpoincare}, we obtain that  $u_\lambda$ is a bounded sequence
in $BH$. By  Theorem \ref{thm:BHcompact} we obtain that  a subsequence converges
weakly * in $BH(\Omega)$.  It remains to prove \eqref{eq:3}.

Let $x\in \Omega$. Let $a_1,a_2$ denote the absolute values of the eigenvalues
of $\xi$. For $\rho^0(\xi)=a_1+a_2 \leq \sqrt{\lambda}$ we have
\[
\begin{split}
  a_1+a_2-2\lambda^{-1/2}a_1a_2
  \geq \lambda^{-1/2}\left( (a_1+a_2)^2-2a_1a_2\right)
\geq 0\,.
\end{split}
\]
Hence we have 
\begin{equation}
G_\lambda(\xi)=2(a_1+a_2-\lambda^{-1/2}a_1a_2)\geq a_1+a_2=\rho^0(\xi)\,.\label{eq:2}
\end{equation}
If
$a_1+a_2\geq \sqrt{\lambda}$, then 
\[
G_\lambda(\xi)\geq \frac{a_1^2+a_2^2}{\sqrt{\lambda}}\geq
\frac12\frac{(a_1+a_2)^2}{\sqrt{\lambda}}\geq \frac12 (a_1+a_2)=\frac12 \rho^0(\xi)
\]
Combining these two cases, we obtain \eqref{eq:3} which  proves the
compactness part of the theorem.
\end{proof}

\begin{proof}[Proof of the upper bound in Proposition \ref{prop:gammaairy}]
By Theorem \ref{thm:traceop}, the application of the right inverse of the trace
operator $(\gamma_0,\gamma_1)$ to $\bar f_\lambda$ and $\bar
f$ yields a sequence
$ f_\lambda:=(\gamma_0,\gamma_1)^{-1}\bar f_\lambda$ in
$H^{2}(\Omega)$ and $f:=(\gamma_0,\gamma_1)^{-1}\bar f\in W^{2,1}(\Omega)$ such
that
 \begin{equation}
\begin{split}
   f_\lambda&\to f\quad\text{ weakly in }W^{2,1}(\Omega)\\
  \lambda^{-1/2}\| f_\lambda\|^2_{H^{2}(\Omega)}&\to 0\,.
\end{split}\label{eq:21}
\end{equation}
By Theorem \ref{thm:steinext}, we may extend $ f_\lambda$ and $ f$
to all of $\R^2$ such that
\[ 
\begin{split}
   f_\lambda&\to f\quad\text{ weakly in }W^{2,1}(\R^2)\\
  \lambda^{-1/2}\| f_\lambda\|^2_{H^{2}(\R^2)}&\to 0\,.
\end{split}
\]

Let $v\in BH(\R^2)$ be defined by 
\[
v(x)=\begin{cases}u(x)-f(x) &\text{ for }x\in \Omega\\ 0&\text{
    else.}\end{cases}
\]
Let $V\subset\R^2$ be some neighborhood of $\overline\Omega$, and let
$\phi:V\to \phi(V)\subset\R^2$ be a $C^2$-diffeomorphism, such that
$\phi(\Omega)=K$, where $K$ is a convex polygon that contains the origin. Such a map $\phi$  exists by
our assumptions on $\Omega$, see Definition \ref{def:H1}. Choose $C_1>0$ such
that
\[
\dist
\left(\phi^{-1}\left(\frac{K}{\sqrt{1+C_1\e}}\right),\partial\Omega\right)>\e
\]
for all $\e>0$ small enough.
For such $\e$,   we set
\[
\Theta_\e(x)=\phi^{-1}\left(\frac{\phi(x)}{\sqrt{1+C_1\e}}\right)\,,
\]
and
\[
v_\e(x):=v\left(\Theta_\e(x)\right)\,.
\]
Note that $v_\e\in BH(\R^2)$
and $v_\e=0$ on $\{x:\dist(x,\partial\Omega)\leq \e\}$.
Additionally, we claim that in the limit $\e\to 0$, we have
\begin{equation}
\begin{split}
  v_\e&\to v  \quad\text{ in } W^{1,1}(\R^2)\,,\\
  |D^2 v_\e|(\R^2)&\to |D^2 v|(\R^2)\,.
\end{split}\label{eq:50}
\end{equation}
To prove this claim, we 
observe that $\nabla\Theta_\e(x)\to\id_{2\times 2}$ and $\nabla^2\Theta_\e(x)\to 0$
uniformly as $\e\to 0$.
Now we have
\[
\begin{split}
  \nabla v_\e(x)&=\nabla v(\Theta_\e(x))\nabla\Theta_\e(x)\\
D^2 v_\e&= (D^2 v\circ\Theta_\e):\left(\nabla\Theta_\e\otimes\nabla\Theta_\e\right)\\
&\quad+\nabla v\circ\Theta_\e\cdot \nabla^2\Theta_\e\, \L^2 \,,
\end{split}
\]
where by $(D^2 v\circ\Theta_\e)$, we mean the Radon measure that is defined by 
\[
\int_\Omega \varphi:\d(D^2 v\circ\Theta_\e)=\int_{\Omega}
\varphi(\Theta_\e^{-1}(z))\det \nabla \Theta_\e^{-1}(z):\d(D^2 v)(z)\quad\text{
  for }\varphi\in C_c^0(\Omega;\R^{2\times 2}).
\]
From the uniform convergences $(\Theta_\e(x)-x)\to 0$, $\nabla\Theta_\e(x)\to
\id_{2\times 2}$ and $\nabla^2\Theta_\e(x)\to 0$, the claim \eqref{eq:50}
follows.

\medskip

We let $\varphi\in C^\infty_c(\R^2)$ be
such that $\int\varphi(x)\d x=1$ and $\supp \varphi\subset \{x\in\R^2:|x|<1\}$, and $\varphi_\e=\e^{-2}\varphi(\cdot/\e)$.
Next we set
\[
\tilde v_\e:=\varphi_\e * v_\e\,,
\]
and by the properties of $\varphi_\e,v_\e$, we have
\[
\begin{split}
  \tilde v_\e&=0, \quad\nabla \tilde v_\e=0\quad\text{ on } \partial \Omega\,,\\
\tilde v_\e&\to v  \quad\text{ in } W^{1,1}(\R^2)\,,\\
\|\nabla^2 v_\e\|_{L^1(\R^2)}&\to |D^2 v|(\R^2)\,.
\end{split}
\]
Furthermore, there exists a constant $C_2>0$ such that
\[
  \|\nabla^2 v_\e\|_{L^\infty(\R^2)}\leq C_2 \e^{-1}|D^2 v|(\R^2)\,.
\]
We set
\[
\e(\lambda):= 4\lambda^{-1/2}C_2 |D^2 v|(\R^2)\,,
\]
and define the recovery sequence by
\[
u_\lambda= f_{\lambda}+ \tilde v_{\e(\lambda)}\,.
\]
Our choice of $\e(\lambda)$ implies 
 that
\begin{equation}
\|\nabla^2 \tilde v_{\e(\lambda)}\|_{L^\infty}<\frac{\sqrt{\lambda}}{4}\,.\label{eq:37}
\end{equation}
From \eqref{eq:21} and Theorem \ref{thm:BHcompact}, we see that $u_\lambda$
converges weakly * in $BH(\R^2)$ to the function
\[
\tilde u(x)=\begin{cases} u(x)&\text{ if }x\in\Omega\\
f(x) &\text{ else.}\end{cases}
\]
Next let 
\[
\tilde E_\lambda:=\{x\in\Omega:\rho^0(\nabla^2u_\lambda)>\sqrt{\lambda}\}\,.
\]
By \eqref{eq:30} and \eqref{eq:37}, we have
\[
\tilde E_\lambda\subset\{x\in\Omega:|\nabla^2f_\lambda|>\sqrt{\lambda}/4\}=:E_\lambda\,.
\]
Let 
\[
\Omega_\e:=\{x\in\R^2:\dist(x,\Omega)<\e\}\,.
\]
For every $\e>0$, we have that 
\[
\begin{split}
  \int_{\Omega_\e\setminus E_\lambda} G_\lambda(\nabla^2 u_\lambda)\d
  x& \leq 2\int_{\Omega_\e\setminus E_\lambda} \rho^0(\nabla^2 u_\lambda)\d x\\
  \int_{ E_\lambda} G_\lambda(\nabla^2 u_\lambda)\d x& \lesssim
  \lambda^{1/2}\L^2(E_\lambda)+\int_{E_\lambda} \frac{|\nabla^2
      u_\lambda|^2}{\sqrt{\lambda}}\d x\\
&\,\,\lesssim \frac{\|\nabla^2 f_\lambda\|^2_{L^2}}{\sqrt{\lambda}}\to
0\quad\text{ as } \lambda\to \infty\,,
  \end{split}
\]
where we have used the assumption \eqref{eq:25}.
This implies, using the non-negativity of $G_\lambda$,
\begin{equation}
\begin{split}
  \limsup_{\lambda\to\infty}\int_{\Omega} G_\lambda(\nabla^2
  u_\lambda)\d x
&\leq
\limsup_{\lambda\to\infty}\int_{\Omega_\e} G_\lambda(\nabla^2
  u_\lambda)\d
  x\\
&\leq \limsup_{\lambda\to\infty}2\int_{\Omega_\e} \rho^0(\nabla^2 u_\lambda)\d x\\
  &=2\int_{\Omega_\e}\rho^0(D^2 \tilde u)\,,
\end{split}
\label{eq:39}
\end{equation}
where we have used Theorem \ref{thm:homomeas} in the last step, and the fact
that
\[
\rho^0(D^2 \tilde u)(\partial \Omega_\e)=0\,.
\]
Taking the limit $\e\to 0$ in the estimate \eqref{eq:39}, we obtain
\begin{equation}
\begin{split}
  \limsup_{\lambda\to\infty}\int_{\Omega} G_\lambda(\nabla^2
  u_\lambda)\d x&\leq
  2\int_{\overline \Omega}\rho^0(D^2 \tilde  u)\\
  &=2\int_{\Omega}\rho^0(D^2 u)+2\int_{\partial\Omega}|\nabla u-\nabla
  f|\d\H^1\,.
\end{split}\label{eq:36}
\end{equation}
On the right hand side above,  $\nabla u|_{\partial\Omega}$ has to be understood
as the trace of $\nabla u\in BV(\Omega)$. 
By Theorem \ref{thm:BHcont}, we have that $\tilde u$ is continuous. In
particular, we must have $f|_{\partial \Omega}=\gamma_0(u)$, and hence

\begin{equation}
\begin{split}
  \int_{\partial\Omega}|\nabla u-\nabla
  f|\d\H^1&=\int_{\partial\Omega}|\gamma_1(u)-\partial_n f|\d\H^1\\
  &=\int_{\partial\Omega}|\gamma_1(u)-\bar f_2|\d\H^1\,,
\end{split}\label{eq:46}
\end{equation}
which implies
\[
\limsup_{\lambda\to\infty}\int_{\Omega} G_\lambda(\nabla^2
  u_\lambda)\d x \leq
2\int_{\Omega}\rho^0(D^2 u)+2\int_{\partial\Omega}|\gamma_1(u)-\bar
f_2|\d\H^1\,.
\]
By Theorem \ref{thm:relax2}, we may find  $U_\lambda\in H^{2}(\Omega)$ with
$U_\lambda=f_\lambda$ on $\partial\Omega$ such
that it satisfies
\[
\begin{split}
\lambda^{-1/2}\int_{\Omega}
F_\lambda(\nabla^2 U_\lambda)\d x
&\leq \int_{\Omega}
  G_\lambda(\nabla^2
  u_\lambda)\d x+\frac{1}{\lambda}\\
  \|U_\lambda-u_\lambda\|_{W^{1,2}(\Omega)}&\leq \frac{1}{\lambda}\,.
\end{split}
\]
This implies that $U_\lambda\to u$ in $W^{1,1}(\Omega)$. Also, we have
\[
\limsup_{\lambda\to\infty}
\F_{\bar f_\lambda,\lambda}(U_\lambda)\leq 2\int_{\Omega}\rho^0(D^2 u)+2\int_{\partial\Omega}|\gamma_1(u)-\bar
f_2|\d\H^1\,.
\]
By the compactness
part, it follows that $U_\lambda \to u$ weakly * in $BH(\Omega)$. 
This proves that $U_\lambda$ is the required recovery sequence.
\end{proof}


\section{Proof of the lower bound}
\label{sec:proof-lower-bound}
\begin{lemma}
\label{lem:glamtri}
  Let $\Omega\subset \R^2$ be open  and bounded, $\varphi\in C^0(\Omega;\R^2)$ and $w_\lambda\to 0$ in $L^1(\Omega)$ with
  $\|\nabla w_\lambda\|_{L^1}\leq C$. Then
  $G_\lambda(\varphi\otimes w_\lambda)\to 0$ in $L^1$.
\end{lemma}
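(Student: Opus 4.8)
The plan is to dominate $G_\lambda(\varphi\otimes w_\lambda)$ pointwise by $2\rho^0(\varphi\otimes w_\lambda)$ plus, on the set where $\rho^0>\sqrt\lambda$, the term $\lambda^{1/2}+\lambda^{-1/2}|\varphi\otimes w_\lambda|^2$, and then to integrate, using that $w_\lambda\to0$ in $L^1(\Omega)$ while $\{w_\lambda\}$ is bounded in $W^{1,1}(\Omega)$, hence --- by the critical two-dimensional Sobolev embedding --- also in $L^2(\Omega)$. Throughout I write $M:=\|\varphi\|_{L^\infty(\Omega)}$, which is finite since in the applications of the lemma $\varphi$ is bounded, and I use $|\varphi\otimes w_\lambda|=|\varphi|\,|w_\lambda|\leq M|w_\lambda|$ (if $\varphi\otimes w_\lambda$ is understood with its symmetric part, this inequality and all estimates below persist, since $|\mathrm{sym}(a\otimes b)|\leq|a|\,|b|$).

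The pointwise estimate I would record is
\[
G_\lambda(\xi)\ \leq\ 2\rho^0(\xi)\ +\ \bigl(\lambda^{1/2}+\lambda^{-1/2}|\xi|^2\bigr)\,\chi_{\{\rho^0(\xi)>\sqrt\lambda\}}\qquad\text{for every }\xi\in\Rsym\,,
\]
which is immediate from the two-branch formula for $G_\lambda=\lambda^{-1/2}Q_2F_\lambda$ recalled at the beginning of Section \ref{sec:proof-comp-upper}: on $\{\rho^0(\xi)\leq\sqrt\lambda\}$ one discards the non-positive summand $-2\lambda^{-1/2}|\det\xi|$, and on $\{\rho^0(\xi)>\sqrt\lambda\}$ the bound holds trivially because $2\rho^0(\xi)\geq0$. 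Setting $E_\lambda:=\{x\in\Omega:\rho^0(\varphi\otimes w_\lambda)(x)>\sqrt\lambda\}$ and using \eqref{eq:30} together with $|\varphi\otimes w_\lambda|\leq M|w_\lambda|$, so that $\rho^0(\varphi\otimes w_\lambda)\leq 2M|w_\lambda|$ and hence $E_\lambda\subset\{|w_\lambda|>\sqrt\lambda/(2M)\}$, integration of the pointwise estimate yields
\[
\int_\Omega G_\lambda(\varphi\otimes w_\lambda)\,\d x\ \leq\ 4M\|w_\lambda\|_{L^1(\Omega)}\ +\ \lambda^{1/2}\L^2(E_\lambda)\ +\ \lambda^{-1/2}M^2\|w_\lambda\|_{L^2(\Omega)}^2\,.
\]

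It then remains to see that each of the three terms vanishes as $\lambda\to\infty$. The first does because $w_\lambda\to0$ in $L^1$. For the second, Chebyshev's inequality applied to $E_\lambda\subset\{|w_\lambda|>\sqrt\lambda/(2M)\}$ gives $\L^2(E_\lambda)\leq (2M/\sqrt\lambda)\|w_\lambda\|_{L^1(\Omega)}$, hence $\lambda^{1/2}\L^2(E_\lambda)\leq 2M\|w_\lambda\|_{L^1(\Omega)}\to0$. For the third, $\{w_\lambda\}$ is bounded in $W^{1,1}(\Omega)$ (it converges to $0$ in $L^1$ and $\|\nabla w_\lambda\|_{L^1}\leq C$), so by the embedding $W^{1,1}(\Omega)\hookrightarrow L^2(\Omega)$ in dimension two it is bounded in $L^2(\Omega)$, and the prefactor $\lambda^{-1/2}$ kills the term. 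Combining the three bounds gives $\|G_\lambda(\varphi\otimes w_\lambda)\|_{L^1(\Omega)}\to0$, which is the claim.

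The one step that is not entirely routine is the third term: on $E_\lambda$ one has $|w_\lambda|>\sqrt\lambda/(2M)$, so there is no pointwise smallness to exploit there and the uniform $L^2$-bound is genuinely needed; this is precisely where the hypothesis $\|\nabla w_\lambda\|_{L^1}\leq C$ (and the restriction to $n=2$) enters. By contrast the rank-one structure of $\varphi\otimes w_\lambda$, which makes $\det(\varphi\otimes w_\lambda)$ vanish (or, after symmetrisation, non-positive), is convenient but never actually exploited, since on the first branch only $G_\lambda\leq 2\rho^0$ is used.
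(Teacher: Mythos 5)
Your proof is correct and follows essentially the same route as the paper: split according to the two branches of $G_\lambda$, use the $L^1$-smallness of $w_\lambda$ for the linear term, a Chebyshev bound for the measure of the set where the quadratic branch is active, and the critical two-dimensional embedding $W^{1,1}\hookrightarrow L^2$ (which the paper invokes in Poincar\'e form, $\|w_\lambda-\fint w_\lambda\|_{L^2}\lesssim\|\nabla w_\lambda\|_{L^1}$) to kill the term $\lambda^{-1/2}\|w_\lambda\|_{L^2}^2$. The only cosmetic difference is that you control $\lambda^{1/2}\L^2(E_\lambda)$ via $L^1$-Chebyshev while the paper uses the $L^2$ bound there; both work, and your remarks on the boundedness of $\varphi$ and on symmetrising $\varphi\otimes w_\lambda$ match what the paper implicitly assumes.
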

\begin{proof}
  By the Poincar\'e inequality,
\[
\left\|w_\lambda-\left(\fint_{\Omega}w_\lambda(x)\d x\right)\right\|_{L^2}\leq C\|\nabla
w_{\lambda}\|_{L^1}\leq C\,.
\]
For $\lambda$ large enough we may assume $|\fint_{\Omega}w_\lambda(x)\d x|<1$,
and hence
$\|w_{\lambda}\|_{L^2}<C$. In particular, we have
\[
\L^2\left(\{x:|w_\lambda|>C^{-1}\sqrt{\lambda}\}\right)<\frac{C}{\lambda}\,.
\]
Now by $G_\lambda(\xi)=2\rho^0(\xi)-2\lambda^{-1/2}|\det \xi|\lesssim |\xi|$ for
$\rho^0(\xi)\leq\sqrt{\lambda}$ and $|\xi|\leq\rho^0(\xi)$, we have
\[
\begin{split}
  \int_{\Omega} G_{\lambda}(\varphi\otimes w_{\lambda})\d x\leq&
  C\left(\int_\Omega |w_\lambda|+\frac{|w_{\lambda}|^2}{\sqrt{\lambda}}\d
    x\right)+\lambda^{1/2} \L^2\left(\{x:|w_\lambda|>C^{-1}\sqrt{\lambda}\}\right)\\
    \to &0\quad\text{ as } \lambda\to \infty\,.
  \end{split}
\]
This proves the lemma.
\end{proof}

\begin{lemma}
\label{lem:affinebc}  
\begin{itemize}
\item[(i)] 
Let $\Omega\subset \R^2$ be open and bounded, $\xi_0\in \R^{2\times 2}$,
and  $w_\lambda\to 0$ in $L^1(\Omega)$ as $\lambda\to \infty$. Then
\[
\liminf_{\lambda\to\infty}\int_\Omega G_\lambda(\xi_0+\nabla w_\lambda)\d x\geq
2\L^2(\Omega)\rho^0(\xi_0)\,.
\]
\item[(ii)]
Let $\nu\in\R^2$ with $|\nu|=1$ and let $\tilde Q\subset \R^2$ be a cube such
that one of its sides is parallel to $\nu$, and let  $v\in L^1(\tilde Q;\R^2)$ such that
\[
v(y)=\psi(y\cdot \nu)\eta\quad\text{ for all }y\in\tilde Q
\]
for some $\eta\in\R^2$ and  $\psi\in BV((0,1))$. Furthermore let $v_j\to v\in
L^1(\tilde Q)$, and $\lambda_j\to\infty$. Then
\[
\liminf_{j\to\infty}\int_{\tilde Q}f_{\lambda_j}(\nabla v_j)\d x\geq
2\rho^0(Dv)(\tilde Q)\,.
\]
\end{itemize}
\end{lemma}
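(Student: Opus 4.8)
I would prove both parts by combining an elementary pointwise lower bound for $G_\lambda$ with a lower semicontinuity argument: part (ii) is essentially one-dimensional (a slicing argument closed off by lower semicontinuity of the one-dimensional total variation), while part (i) rests on the $2$-quasiconvexity of $G_\lambda$ together with the pointwise limit $G_\lambda(\xi_0)\to 2\rho^0(\xi_0)$.

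\emph{Part (ii).} Passing to a subsequence along which the $\liminf$ is attained, we may assume it is finite, so by \eqref{eq:3} the maps $v_j$ lie in $W^{1,1}(\tilde Q;\R^2)$ with $\nabla v_j$ bounded in $L^1$. The crucial pointwise fact is that for all $\xi$, all unit $\nu$, and $\hat\eta:=\eta/|\eta|$,
\[
G_\lambda(\xi)\ \geq\ 2\,\bigl|\hat\eta^{T}\xi\,\nu\bigr|\,\bigl|\hat\eta\cdot\nu\bigr|\,.
\]
Indeed, writing $a_1\geq a_2\geq 0$ for the singular values of $\xi$: if $\rho^0(\xi)\leq\sqrt\lambda$ then $|\hat\eta^{T}\xi\nu|\leq a_1\leq\sqrt\lambda$ and $\lambda^{-1/2}|\det\xi|=\lambda^{-1/2}a_1a_2\leq a_2$, hence $\tfrac12 G_\lambda(\xi)=\rho^0(\xi)-\lambda^{-1/2}|\det\xi|\geq a_1\geq|\hat\eta^{T}\xi\nu|\geq|\hat\eta^{T}\xi\nu||\hat\eta\cdot\nu|$; if $\rho^0(\xi)>\sqrt\lambda$ then $G_\lambda(\xi)=\sqrt\lambda+\lambda^{-1/2}|\xi|^2\geq\sqrt\lambda+\lambda^{-1/2}(\hat\eta^{T}\xi\nu)^2\geq 2|\hat\eta^{T}\xi\nu|$ by the arithmetic-geometric mean inequality. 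Since $\hat\eta^{T}(\nabla v_j)\nu=\partial_\nu(\hat\eta\cdot v_j)$, this gives
\[
\int_{\tilde Q}G_{\lambda_j}(\nabla v_j)\,\d x\ \geq\ 2\,|\hat\eta\cdot\nu|\int_{\tilde Q}\bigl|\partial_\nu(\hat\eta\cdot v_j)\bigr|\,\d x\,.
\]
Now slice $\tilde Q$ by segments parallel to $\nu$; by Fubini and a further subsequence, for a.e.\ slice the one-dimensional trace $\hat\eta\cdot v_j$ converges in $L^1$ to the trace of $\hat\eta\cdot v=|\eta|\,\psi$, and on each such slice lower semicontinuity of the one-dimensional total variation under $L^1$-convergence gives $\liminf_j\int|\partial_t(\hat\eta\cdot v_j)|\geq|\eta|\,|D\psi|$ over that slice. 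Integrating over the slices (Fatou) and using $|Dv|(\tilde Q)=|\eta|\cdot(\text{total variation of }\psi)\cdot(\text{width of }\tilde Q)$ and $\rho^0(Dv)(\tilde Q)=|\hat\eta\cdot\nu|\,|Dv|(\tilde Q)$ (the density of $Dv$ is the rank-one matrix $\pm\,\eta\otimes\nu/|\eta|$, whose eigenvalues are $0$ and $\hat\eta\cdot\nu$), one concludes $\liminf_j\int_{\tilde Q}G_{\lambda_j}(\nabla v_j)\,\d x\geq 2\rho^0(Dv)(\tilde Q)$. The same inequalities hold verbatim with $G_\lambda$ replaced by $\lambda^{-1/2}F_\lambda$, since $\lambda^{-1/2}F_\lambda\geq G_\lambda$.

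\emph{Part (i).} Assume the limit is finite; by \eqref{eq:3} the field $\nabla w_\lambda$ is bounded in $L^1(\Omega)$, so together with $w_\lambda\to0$ in $L^1(\Omega)$ one gets $(\xi_0+\nabla w_\lambda)\,\L^2\wsto\xi_0\,\L^2$ on $\Omega$. Since the integrand depends on symmetric matrices, $\nabla w_\lambda$ is symmetric, hence $w_\lambda$ is curl-free and (on the simply connected domains on which the lemma is used) $w_\lambda=\nabla\psi_\lambda$ for a scalar $\psi_\lambda$; normalising $\psi_\lambda$ to zero mean, $\psi_\lambda\to0$ in $W^{1,1}(\Omega)$, $\nabla^2\psi_\lambda=\nabla w_\lambda$ is bounded in $L^1$, and by Theorem~\ref{thm:Q2F} $G_\lambda(\xi_0)=2\rho^0(\xi_0)-2\lambda^{-1/2}|\det\xi_0|\to 2\rho^0(\xi_0)$. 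As $G_\lambda=\lambda^{-1/2}Q_2F_\lambda$ is $2$-quasiconvex, its defining inequality (extended to $W^{2,2}_0$ test functions via the growth of $G_\lambda$) yields $\int_\Omega G_\lambda(\xi_0+\nabla^2\phi)\,\d x\geq G_\lambda(\xi_0)\,\L^2(\Omega)$ for every $\phi\in W^{2,2}_0(\Omega)$. One passes from $\psi_\lambda$ (whose boundary data need not vanish) to such a $\phi$ by a De Giorgi-type cut-off: multiply $\psi_\lambda$ by a cut-off supported away from $\partial\Omega$ in a thin annulus of width $w$, and select the annulus by a pigeonhole argument among $\sim 1/w$ disjoint annuli so that the annular energy is small. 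Here one uses $G_\lambda(\xi)\leq 4|\xi|+\lambda^{-1/2}|\xi|^2$ and, from the energy bound together with $G_\lambda(\xi)\geq\lambda^{-1/2}|\xi|^2$, that $\lambda^{-1/2}\|\nabla^2\psi_\lambda\|_{L^2(\Omega)}^2$ is bounded, together with the two-dimensional embedding $W^{1,1}\hookrightarrow L^2$ to bound $\|\nabla\psi_\lambda\|_{L^2}$ and $\|\psi_\lambda\|_{L^2}$; letting first $\lambda\to\infty$ and then $w\to0$ along a diagonal sequence $w=w(\lambda)$ makes the annular contribution vanish. Combining the cut-off estimate with the quasiconvexity inequality and $G_\lambda(\xi_0)\to2\rho^0(\xi_0)$ gives $\liminf_\lambda\int_\Omega G_\lambda(\xi_0+\nabla w_\lambda)\,\d x\geq 2\rho^0(\xi_0)\,\L^2(\Omega)$.

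\emph{Where the difficulty lies.} Part (ii) is routine once the pointwise inequality is established, the potentially troublesome term $\lambda^{-1/2}|\det\xi|$ being absorbed pointwise through the case distinction above. The genuine work is the cut-off step in (i): unlike the linear-growth integrands for which De Giorgi's argument is classical, $G_\lambda$ has quadratic growth on the bad set $\{\rho^0>\sqrt\lambda\}$, so the annular error contains a term $\lambda^{-1/2}\int_{\text{annulus}}|\nabla^2(\text{cut-off})|^2$; forcing it to vanish requires a careful coupling of the annulus width $w$, the rate $\lambda\to\infty$, and the pigeonhole selection, and the bookkeeping of the Leibniz terms in $\nabla^2(\eta\,\psi_\lambda)$ against these choices is the most calculation-heavy part of the argument.
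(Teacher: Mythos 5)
Your argument is correct and takes a genuinely different route from the paper. The paper proves (ii) by transplanting the proof of Lemma 4.3(ii) of Ambrosio--Dal Maso word by word and invoking part (i) in its last step; you instead give a self-contained proof via the pointwise bound $G_\lambda(\xi)\geq 2|\hat\eta^T\xi\,\nu|\,|\hat\eta\cdot\nu|$ (whose case distinction is correct), slicing parallel to $\nu$, and lower semicontinuity of the one-dimensional variation. This avoids part (i) entirely, and in the only situation in which the lemma is actually applied ($Dv$ symmetric, hence $\eta=\pm\nu$ by Alberti's theorem) it yields exactly $2\rho^0(Dv)(\tilde Q)=2|Dv|(\tilde Q)$, which is what the blow-up argument for the singular part needs. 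One small caveat: for $\eta\neq\pm\nu$ the quantity $\rho^0(Dv)$ is only meaningful if $\rho^0$ is extended to non-symmetric rank-one matrices via eigenvalues; that is what you implicitly do, and it matches the intended reading of the statement.

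\textbf{Part (i).} Here there is a genuine gap relative to the lemma as stated. The inference ``since the integrand depends on symmetric matrices, $\nabla w_\lambda$ is symmetric, hence $w_\lambda=\nabla\psi_\lambda$'' is not justified: the hypotheses are only that $\Omega$ is open and bounded, $\xi_0\in\R^{2\times 2}$, and $w_\lambda\to 0$ in $L^1$; neither symmetry of $\nabla w_\lambda$, nor simple connectedness, nor a Poincar\'e inequality for normalising $\psi_\lambda$ is available, so the reduction to a scalar potential and to $2$-quasiconvexity proves only a restricted version of the statement. The detour is also unnecessary: $\bar G_\lambda$ coincides with the Kohn--Strang envelope $Q_1F_\lambda$ (this is precisely how Theorem \ref{thm:Q2F} is proved in the appendix), so $G_\lambda$ is quasiconvex in the first-gradient sense and one may cut off the vector field $w_\lambda$ itself. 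That is the paper's route: it establishes the almost-subadditivity \eqref{eq:27}, uses Lemma \ref{lem:glamtri} to dispose of the cross term $G_\lambda(\nabla\varphi^k\otimes w_\lambda)$, and averages over nested cut-offs instead of your pigeonhole. Finally, your pigeonhole bookkeeping is off as written: roughly $1/w$ disjoint annuli of width $w$ fill a region of fixed measure, so the set where the cut-off vanishes does not shrink as $w\to 0$; one must confine $N$ annuli of width $\delta/N$ to a boundary layer of width $\delta$, select the best one, and send first $\lambda\to\infty$, then $N\to\infty$, then $\delta\to 0$. With that correction (and your control of the quadratic Leibniz terms through $\lambda^{-1/2}\|\nabla^2\psi_\lambda\|_{L^2}^2\lesssim 1$ and the embedding $W^{1,1}\hookrightarrow L^2$ in two dimensions) your cut-off argument does close, but only under the additional symmetry and simple-connectedness assumptions, not in the generality claimed by the lemma.
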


\begin{proof}[Proof of (i)]
We may assume $\lim_{\lambda\to\infty}\int_\Omega G_\lambda(\xi_0+\nabla
w_\lambda)=C<\infty$. We claim that for
$A$, $B\in \R^{2\times 2}$, we have
\begin{equation}
G_\lambda(A+B)\leq C  \left(G_{\lambda}(A)+G_\lambda(B)\right)\,.\label{eq:27}
\end{equation}
Indeed, by the sublinearity of $\rho^0$, we may assume
$\rho^0(A)\geq\rho^0(A+B)/2$ and $\rho^0(A)\geq\rho^0(B)$. Using the fact that
$|\xi|/2\leq \rho^0(\xi)\leq 2|\xi|$, we can make the following
case distinction:
\begin{itemize}
\item[Case 1:] If $\rho^0(A+B),\rho^0(A)> \sqrt{\lambda}$, then we have
\[G_\lambda(A)=\lambda^{1/2}+\lambda^{-1/2}|A|^2\geq
\lambda^{1/2}+\lambda^{-1/2}\left(\frac{\rho^0(A+B)}{4}\right)^2\geq C
G_\lambda(A+B)\]
\item[Case 2:] If $\rho^0(A)\leq \sqrt{\lambda}, \rho^0(A+B)\geq \lambda$, then
$\sqrt{\lambda}/2\leq \rho^0(A)\leq\rho^0(A+B)\leq
2\sqrt{\lambda}$. Additionally,  by \eqref{eq:2}, we have $\rho^0(A)\leq G_\lambda(A)$. This allows us to
estimate
\[
G_\lambda(A)\geq \rho^0(A)\geq \sqrt{\lambda}/2 \geq
C\left(\sqrt{\lambda}+\lambda^{-1/2}(2\lambda^{1/2})^2\right)\geq C G_\lambda(A+B)\,.
\]
\item[Case 3:] If $\rho^0(A),\rho^0(A+B)\leq\sqrt{\lambda}$, then again by
  \eqref{eq:2}, we have $\rho^0(A)\leq G_\lambda(A)$, and hence
\[
G_\lambda(A)\geq \rho^0(A)\geq \frac12 \left(\rho^0(A+B)-\lambda^{-1/2}|\det
  (A+B)|\right)=\frac14 G_\lambda(A+B)\,.
\]
\end{itemize}
This proves \eqref{eq:27}. Hence we have
\[
\int_\Omega G_\lambda(\nabla w_\lambda)\d x<C\,,
\]
and there exists a non-negative Radon measure $\mu$ such that
\[
G_\lambda(\nabla w_\lambda)\L^2\to \mu\text{ weakly * in the sense of measures.}
\]
Let $\Omega_k$ be an increasing sequence of subdomains s.t.
\[
\Omega_k\Subset\Omega, \quad\Omega=\cup_{k\in\N}\Omega_k
\]
and let $\varphi^k$ be smooth cutoff functions with $0\leq \varphi^k\leq 1$,
$\varphi^k|_{\Omega_k}=1$, $\varphi^k|_{\Omega\setminus\Omega_{k+1}}=0$. We set 
\[
w_\lambda^k= \varphi^k w_\lambda\,.
\] By the quasiconvexity of $G_\lambda$, we have
\[
\begin{split}
  \L^2(\Omega)G_\lambda(\xi_0)\leq& \int_\Omega G_\lambda(\xi_0+\nabla w_\lambda^k)\d
  x\\
  \leq &\L^2(\Omega\setminus\Omega_{k+1})G_\lambda(\xi_0)\d
  x+\int_{\Omega_{k+1}\setminus \Omega_k}G_\lambda(\xi_0+\nabla w_\lambda^k)\d
  x\\
&+\int_{\Omega_k}G_\lambda(\xi_0+\nabla w_\lambda)\d x\,.
\end{split}
\]
This implies 

\begin{equation}
\L^2(\Omega_{k+1})G_\lambda(\xi_0)\leq \int_{\Omega_{k+1}\setminus \Omega_k}G_\lambda(\xi_0+\nabla w_\lambda^k)\d
  x +\int_{\Omega_k}G_\lambda(\xi_0+\nabla w_\lambda)\d x\,.\label{eq:4}
  \end{equation}
We estimate 
\begin{equation}
\begin{split}
  \int_{\Omega_{k+1}\setminus \Omega_k}G_\lambda(\xi_0+\nabla w_\lambda^k)\d x&\lesssim
   \int_{\Omega_{k+1}\setminus
    \Omega_k}\left(G_\lambda(\xi_0)+G_\lambda(\nabla\varphi^k\otimes
  w_\lambda)+G_\lambda(\varphi^k\nabla w_\lambda)\right)\d x\\
   &\lesssim \L^2(\Omega_{k+1}\setminus\Omega_k)G_\lambda(\xi_0)+\int_{\Omega_{k+1}\setminus
    \Omega_k} G_\lambda(\nabla\varphi^k\otimes w_\lambda)\\
&\qquad+\int_\Omega
  G_\lambda(\nabla w_\lambda)(\varphi^{k+1}-\varphi^{k-1})\d x\,,
\end{split}\label{eq:9}
\end{equation}
where we have used \eqref{eq:27} in the first inequality. 
Subtracting the inequality \eqref{eq:4} from $\int_\Omega G_\lambda(\xi_0+\nabla
w_\lambda)\d x$, and additionally using \eqref{eq:9}, we obtain 
\begin{equation}
\begin{split}
  \int_\Omega G_\lambda&(\xi_0+\nabla w_\lambda)\d
  x-\L^2(\Omega_{k+1})G_\lambda(\xi_0)\\
  &\gtrsim\int_{\Omega\setminus\Omega_k}G_\lambda(\xi_0+\nabla w_\lambda)\d x
  -\L^2(\Omega_{k+1}\setminus\Omega_k)G_\lambda(\xi_0)\\
  &\qquad-\int_{\Omega_{k+1}\setminus \Omega_k} G_\lambda(\nabla\varphi^k\otimes
  w_\lambda)\d x -\int_\Omega G_\lambda(\nabla
  w_\lambda)(\varphi^{k+1}-\varphi^{k-1})\d x\,.
\end{split}\label{eq:48}
\end{equation}
We have $\|\nabla
w_\lambda\|_{L^1}\lesssim \|G_\lambda(\nabla w_\lambda)\|_{L^1}\leq C$, and
hence by Lemma
\ref{lem:glamtri}, we have 
\begin{equation}
G_\lambda(\nabla\varphi^k\otimes w_\lambda)\to 0\quad\text{ in
$L^1$.}\label{eq:15}
\end{equation}
Sending $\lambda\to\infty$ in \eqref{eq:48} and using \eqref{eq:15} and the
non-negativity of $G_\lambda$,  we obtain
\[
\begin{split}
  \lim_{\lambda\to\infty}\int_\Omega G_\lambda&(\xi_0+\nabla w_\lambda)\d
  x-\L^2(\Omega_{k+1})2\rho^0(\xi_0)\\
&\gtrsim -\L^2(\Omega_{k+1}\setminus
  \Omega_k)\rho^0(\xi_0)-\int_\Omega (\varphi^{k+1}-\varphi^{k-1})\d \mu\,.
\end{split}
\]
Summing up from $k=2$ to $k=l$ and dividing by $l-1$, we get
\[
\begin{split}
  \lim_{\lambda\to \infty}\int_\Omega &G_\lambda(\xi_0+\nabla w_\lambda)\d
  x-\left(\frac{1}{l-1}\sum_{k=2}^l\L^2(\Omega_{k+1})\right)2\rho^0(\xi_0)\\
  \gtrsim&-\frac{1}{l-1}\L^2(\Omega_{l}\setminus \Omega_2)\rho^0(\xi_0)-\frac{1}{l-1}\int_\Omega
  (\varphi^{l+1}+\varphi^{l}-\varphi^2-\varphi^1)\d\mu\\
  \gtrsim& -\frac{1}{l-1}\left(\L^2(\Omega)\rho^0(\xi_0)+|\mu|(\Omega)\right)\,.
  \end{split}
\]
Sending $l\to\infty$, we obtain the desired result.
\end{proof}
\begin{proof}[Proof of (ii)]
  The proof of Lemma 4.3 (ii) in \cite{ambrosio1992relaxation} can be copied
  word by word; except for the last step, where instead of Lemma 4.3 (i) in that
  reference we
  use part (i) of the present lemma.
\end{proof}

\begin{proof}[Proof of the lower bound in Proposition \ref{prop:gammaairy}]
We may assume $\F_{\bar f}(u)<\infty$ and after choosing an appropriate subsequence, we
may also assume $\lim_{\lambda\to
    \infty}\F_{\bar f_\lambda,\lambda}(u_\lambda)=\F_{\bar f}(u)$. 

Let $(\gamma_0,\gamma_1)^{-1}$ be the right inverse of the trace operator from Theorem
\ref{thm:traceop}, and let  $E$ be the extension operator $L^1(\Omega)\to L^1(\R^2)$ from Theorem
\ref{thm:steinext}.
 Letting $f_\lambda:=E\circ (\gamma_0,\gamma_1)^{-1}\bar f_\lambda$,  
we have  that $\tilde u_\lambda:\R^2\to\R$ defined by
\[
\tilde u_\lambda(x):=\begin{cases}u_\lambda(x)&\text{ if }x\in\Omega\\
f_\lambda(x)&\text{ else }\end{cases}
\]
satisfies $\tilde u_\lambda\in W^{2,2}(\R^2)$. Setting
$A(f_\lambda):=\{x\in\R^2:\rho^0(\nabla^2f_\lambda(x))>\lambda^{1/2}\}$ and 
$\tilde \Omega_\e:=\{x\in\R^2\setminus \Omega:\dist(x,\Omega)<\e\}$,  we
have 

\begin{equation}
\begin{split}
\label{eq:28}
  \int_{\tilde \Omega_\e}G_\lambda(\nabla^2\tilde u_\lambda)\d x &\lesssim
  \int_{\tilde \Omega_\e\setminus A(f_\lambda)} |\nabla^2 f_\lambda|\d x\\
&\quad +
  \int_{\tilde \Omega_\e\cap A(f_\lambda)} \frac{|\nabla^2 f_\lambda|^2}{\lambda^{1/2}}\d
    x+\L^2(\Omega_\e\cap A(f_\lambda)) \lambda^{1/2}\\
&\lesssim \|\nabla^2 f_\lambda\|_{L^{1}(\tilde\Omega_\e)}+\int_{\R^2} \frac{|\nabla^2 f_\lambda|^2}{\lambda^{1/2}}\d
    x
  \end{split}
\end{equation}
By assumption \eqref{eq:25} and the continuity of $E\circ
(\gamma_0,\gamma_1)^{-1}$ we have
\begin{equation}
\int_{\R^2} \frac{|\nabla^2 f_\lambda|^2}{\lambda^{1/2}}\d
    x\to 0\quad \text{ and }\quad f_{\lambda}\wto E\circ
    (\gamma_0,\gamma_1)^{-1} \bar f \quad\text{ in }W^{2,1}(\R^2) \,.\label{eq:32}
\end{equation}
The latter implies in particular that $|\nabla^2f_\lambda|$ is equiintegrable,
and hence  we obtain from \eqref{eq:28} and \eqref{eq:32} that
\begin{equation}
\lim_{\e\to 0}\lim_{\lambda\to\infty}  \int_{\tilde \Omega_\e}G_\lambda(\nabla^2\tilde u_\lambda)\d x =0\label{eq:34}\,.
\end{equation}
From now on we write $v_\lambda=\nabla \tilde u_\lambda$. By assumption, the sequence $\F_{\bar
    f_\lambda,\lambda}(u_\lambda)$ is bounded, and hence
\begin{equation}
\label{eq:18}
\int_{\R^2}G_\lambda(\nabla v_\lambda)\d x=\F_{\bar
    f_\lambda,\lambda}(u_\lambda)+\int_{\Omega\setminus\R^2}G_\lambda(\nabla^2\tilde u_\lambda)\d x \leq C
\end{equation}
by \eqref{eq:28} (with $\e=\infty$). 
Also, by $|\xi|\lesssim G_\lambda(\xi)$ for all $\xi\in \Rsym$, we have
\begin{equation}
\int_{\R^2}|\nabla v_\lambda|\d x\leq C\,.\label{eq:19}
\end{equation}
By \eqref{eq:18} and \eqref{eq:19} and the compactness theorems for $BV$
functions and Radon
measures respectively, there exists a subsequence of $v_\lambda$
(no relabeling), a measure $\mu\in \M(\R^2)$ and  $
v\in BV(\R^2;\R^2)$ with $v=\nabla u$ on $\Omega$, such that
\[
\begin{split}
  v_\lambda &\to  v \quad\text{ weakly * in }BV(\R^2;\R^2)\\
  G(\nabla v_\lambda)\L^2 &\to \mu \quad\text{ weakly * in }\M(\R^2)\,.
\end{split}
\]
By  Theorem \ref{thm:RN} there exists an $\L^2$ measurable function $\xi$ and a
$|D^sv|$ measurable function $\zeta$ such that
\[
\mu=\xi\, \L^2+\zeta\, |D^sv|\,.
\]
We are going to show 
\begin{align}
\label{eq:7}
\xi(x_0)&\geq 2\rho^0(\nabla v(x_0))&\, &\text{ for } \L^2-\text{a.e. }x_0\in \Omega\\
\label{eq:8}
\zeta(x_0)&\geq 2&\, &\text{ for } |D^sv|-\text{a.e. }x_0\in \overline\Omega\,.
\end{align}
We claim that this implies \eqref{eq:26}. Indeed, recall that the right hand side in
\eqref{eq:26} reads 
\[
2\rho^0(Dv)(\Omega)+2\int_{\partial\Omega}|v\cdot n-\bar f_2|\d\H^1
=2\rho^0(D v)(\overline\Omega)\,,
\]
see equations \eqref{eq:36} and \eqref{eq:46} in the proof of the upper bound.
Let
$\Omega_\e:=\{x\in\R^2:\dist(x,\Omega)<\e\}$. By \eqref{eq:34}, the left hand
side of \eqref{eq:26} is equal to
\[
\begin{split}
  \lim_{\e\to 0}\lim_{\lambda\to\infty} \int_{\Omega_\e}G_\lambda(\nabla
  v_\lambda)\d x
  &=\lim_{\e\to 0} \mu(\Omega_\e)\\
  &=\mu(\overline{\Omega})\,.
\end{split}
\]
Now we see that \eqref{eq:7} and \eqref{eq:8} imply $\mu(\overline{\Omega})\geq
2\rho^0(Dv)(\overline{\Omega})$, and hence prove the lower bound part.
It remains to
show \eqref{eq:7} and \eqref{eq:8}.

First we  prove \eqref{eq:7}. Let $Q(x_0,\e):=x_0+[-\e/2,\e/2]^2$. 
For $\L^2$-almost every $x_0$, we may choose a sequence $(\e_j)_{j\in\N}$ converging to
zero, such that
$\mu(\partial Q(x_0,\e_j))=0$ for every $j\in \N$. When we write $\e\to 0$ in the
sequel, we actually mean the limit $j\to\infty$ for such a sequence. For every
$j$, we  have
\[
\lim_{\lambda\to\infty}\int_{Q(x_0,\e_j)} G_\lambda(\nabla v_\lambda)\d x\to \mu(Q(x_0,\e_j))\,.
\]
Note that by Theorem \ref{thm:RN} we have
\begin{equation}
\begin{split}
  \xi(x_0)=&\lim_{\e\to 0} \frac{\mu(Q({x_0,\e}))}{\L^2(Q(x_0,\e))}\\
  =&\lim_{\e\to\infty}\lim_{\lambda\to \infty}\fint_{Q(x_0,\e)}G_\lambda(\nabla
  v_\lambda)\d x\,.
\end{split}\label{eq:12}
\end{equation}
For $\e$ small enough, define $w_{\lambda,\e}:Q\to \R^2$ by
\[
w_{\lambda,\e}(x)=\e^{-1}\left(v_\lambda(x_0+\e x)-v(x_0)\right)\,.
\]
Furthermore let $w_0(x)=\nabla v(x_0)\cdot x$. Using a change of variables, the
Cauchy-Schwarz inequality and \eqref{eq:47}, we have
\begin{equation}
\begin{split}
  \lim_{\e\to 0}\lim_{\lambda\to\infty} \|w_{\lambda,\e}-w_0\|_{L^1(Q)}&=
\lim_{\e\to 0} \frac{1}{\e}\int_{Q}|v(x_0+\e x)-v(x_0)-\nabla v(x_0)\cdot
  \e x|\d x\\
 &= \lim_{\e\to 0} \frac{1}{\e^3}\int_{Q(x_0,\e)}|v(x)-v(x_0)-\nabla v(x_0)\cdot
  (x-x_0)|\d x\\
&\leq \lim_{\e\to 0}\frac{1}{\e^2}\left(\int_{Q(x_0,\e)}|v(x)-v(x_0)-\nabla v(x_0)\cdot
  (x-x_0)|^2\d x\right)^{1/2}\\
  &=0\,.
\end{split}\label{eq:11}
\end{equation}

By \eqref{eq:12} and \eqref{eq:11}, it is possible to choose a sequence
$\lambda_j\to \infty$ and a subsequence $\e_j\to 0$ (no relabeling) such that with $w_j:=w_{\lambda_j,\e_j}$
\[
\begin{split}
  \lim_{j\to \infty}\|w_j-w_0\|&=0\\
  \lim_{j\to \infty}\fint_{Q(x_0,\e_j)}G_{\lambda_j}(\nabla v_{\lambda_j})\d
  x&=\xi(x_0)\,.
\end{split}
\]
Noting $\fint_{Q(x_0,\e_j)}G_{\lambda_j}(\nabla v_{\lambda_j})\d x=\int_Q
G_{\lambda_j}(\nabla w_j)\d x$, we obtain from Lemma \ref{lem:affinebc} that
\[
\begin{split}
  \xi(x_0)=\lim_{j\to\infty}\int_Q  G_{\lambda_j}(\nabla w_j)\d x
  \geq 2\rho^0(\nabla w_0)
  = 2\rho^0(\nabla v(x_0))\,.
\end{split}
\]
This proves \eqref{eq:7}.

We turn to the proof of \eqref{eq:8}. 
By Theorem \ref{thm:RN} we have that for $|D^sv|$-almost every $x_0\in\overline\Omega$,
there exist $\eta,\nu\in \R^2$ with $|\eta|=|\nu|=1$ such that for any open bounded convex
set $K$ containing the origin, we have
\[
\lim_{\e\to 0}\frac{Dv(x_0+\e K)}{|Dv|(x_0+\e K)}=\eta\otimes\nu\,.
\]
Let $Q^\nu\subset\R^2$ denote the cube of sidelength one, with one axis parallel to $\nu$:
\[
Q^\nu:=\left\{x\in\R^2:|x\cdot\nu|<\frac12,\,|x\cdot\nu^\bot|<\frac12\right\}\,.
\]
Furthermore, let $Q^\nu(x_0,\e)=x_0+\e Q^\nu$.
From now on, let the limit $\e\to 0$ be understood only to involve a sequence
$(\e_j)_{j\in\N}$ such that $\mu(\partial Q^\nu(x_0,\e_j))=0$ for all $j\in\N$.
By the definition of $\zeta$, we have

\begin{equation}
\begin{split}
  \zeta(x_0)=&\lim_{\e\to 0} \frac{\mu(Q^\nu(x_0,\e))}{|Dv|(Q^\nu(x_0,\e))}\\
    =&\lim_{\e\to 0}\lim_{\lambda\to\infty}
    \frac{1}{|Dv|(Q^\nu(x_0,\e))}\int_{Q^\nu(x_0,\e)}G_{\lambda}(\nabla
      v_\lambda)\d x\,.
    \end{split}\label{eq:10}
      \end{equation}
We define
\[
\begin{split}
  w_{\lambda,\e}(x)=&\frac{\e}{|Dv|(Q^\nu(x_0,\e))}\left(v_\lambda(x_0+\e
    x)-\fint_{Q^\nu(x_0,\e)}v_\lambda(x') \d x'\right)\\
    w_{\e}(x)=&\frac{\e}{|Dv|(Q^\nu(x_0,\e))}\left(v(x_0+\e
      x)-\fint_{Q^\nu(x_0,\e)}v(x') \d x'\right)\,.
    \end{split}
\]
Let $\sigma\in (0,1)$.
By Theorem \ref{thm:BVblow} there exists 
$\tilde w\in BV_{\loc}(\R)$ such that with
\[
w_0(x):=\tilde w (x\cdot \nu)\eta
\]
we have  $|Dw_0|(Q_\nu)\geq \sigma^2$ and
\[
\lim_{\e\to 0}\|w_\e-w_0\|_{L^1(Q^\nu)}=0\,.
\]
By the convergence $v_\lambda\to v$ in $L^1(\Omega)$, we have
$\lim_{\lambda\to\infty}\|w_{\lambda,\e}-w_\e\|_{L^1(Q^\nu)} =0$, and hence
\begin{equation}
 \lim_{\e\to 0}\lim_{\lambda\to 0} \|w_{\lambda,\e}-w_0\|_{L^1(Q^\nu)}  =0\,.
\label{eq:6}
\end{equation}
By \eqref{eq:10} and \eqref{eq:6}, we can choose a subsequence $\lambda_j$ and
a sequence  $\e_j$ such that  
\begin{equation}
  \label{eq:13}
  \begin{split}
    \lambda_j\frac{|Dv|(Q_{x_0,\e_j})^2}{\e_j^4}\to&\infty\\
    \lim_{j\to\infty
    }\frac{1}{|Dv|(Q_{x_0,\e_j})}\int_{Q_{x_0,\e_j}}G_{\lambda_j}(\nabla
    v_{\lambda_j})\d x=&\zeta(x_0)\\
    \lim_{j\to \infty}\|w_{\lambda_j,\e_j}-w_0\|_{L^1(Q)}=&0\,.
  \end{split}
\end{equation}
Now let $\tilde \e_j:= \frac{|Dv|(Q_{x_0,\e_j})}{\e_j^2}$. Then we have
$\lambda_j\tilde \e_j^2\to\infty$ and
\[
\frac{1}{|Dv|(Q_{x_0,\e_j})}\int_{Q_{x_0,\e_j}}G_{\lambda_j}(\nabla
  v_{\lambda_j})\d x=\fint G_{\lambda_j\tilde \e_j^2}(\nabla
  w_{\lambda_j,\e_j})\d x\,.
\]
Hence using Lemma \ref{lem:affinebc} (ii) it follows 
\[
\zeta(x_0)=\lim_{j\to\infty}\fint G_{\lambda_j\tilde \e_j^2}(\nabla
  w_{\lambda_j,\e_j})\d x\geq 2|Dw_0|(Q^\nu)\geq 2\sigma^2\,.
\]
Sending $\sigma\uparrow 1$  proves \eqref{eq:8} and completes the proof of the
lower bound.
\end{proof}


\appendix

\section{Proof of Theorem \ref{thm:relax2}}
\label{sec:proof-theor-refthm:r}
In analogy to the proof of the relaxation leading to 1-quasiconvex integrands,
we first need to prove an approximation lemma. This is slightly more complicated
here than in the case of 1-quasiconvexity, since we cannot  use
the approximation by finite elements here, which is possible there (see
\cite{dacorogna1982quasiconvexity}). Instead, we are going to need Whitney's extension theorem, that
we quote here in a version that can be found in Stein's book \cite{stein2016singular}. Let
$\Omega\subset\R^n$. 
Let the Greek letters $\alpha,\beta,\gamma\in \N_0^n$ denote multiindices. We
will write
$|\alpha|=\sum_i\alpha_i$,  $\alpha!=\prod_i\alpha_i!$, and
$\nabla^\alpha=\partial_1^{\alpha_1}\dots\partial_n^{\alpha_n}$. Furthermore,
for $x\in\R^n$, we write $x^\alpha=x_1^{\alpha_1}\dots x_n^{\alpha_n}$.
We shall say that a function $f:\Omega\to\R$ belongs to $
\mathrm{Lip}^{(k,1)}(\Omega)$ if there exists a collection of real-valued functions
$\{f^{(\alpha)}:|\alpha|\leq k\}$ and a constant $M>0$ such that
\[
f^{(\alpha)}(y)=\sum_{\beta:|\alpha+\beta|\leq k}\frac{f^{(\alpha+\beta)}(x)}{\beta!} (x-y)^{\beta} +R_\alpha(x,y)
\]
and 
\[
|f^{(\alpha)}(y)|\leq M \text{ and } |R_\alpha(x,y)|\leq M|x-y|^{k+1-|\alpha|}
\]
for all $x,y\in\Omega$ and all multiindices $\alpha$ with $|\alpha|\leq
k$. The set $\mathrm{Lip}^{(k,1)}(\Omega)$ is a normed space, where the norm of $f$ is given by
the smallest constant $M$ such that the above relations hold true.
\begin{theorem}[Theorem 4 in Chapter 6 of \cite{stein2016singular}]
\label{thm:stein}
 Let $k$ be a non-negative integer and let $\Omega\subset\R^n$ be closed. Then there exists a continuous extension
 operator   $\mathrm{Lip}^{(k,1)}(\Omega)\to \mathrm{Lip}^{(k,1)}(\R^n)$. The
 norm of this mapping has a bound that is independent from $\Omega$.
\end{theorem}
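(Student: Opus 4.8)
The statement is the classical Whitney extension theorem for $C^{k,1}$ jets, and the plan is to reproduce the construction from Chapter~VI of \cite{stein2016singular}. Since $\Omega$ is closed, write $\mathcal O:=\R^n\setminus\Omega$. First I would fix a Whitney covering of $\mathcal O$: a family of closed dyadic cubes $\{Q_j\}_j$ with pairwise disjoint interiors, $\bigcup_j Q_j=\mathcal O$, and $\diam Q_j\le\dist(Q_j,\Omega)\le 4\diam Q_j$. Enlarging each $Q_j$ to a concentric cube $Q_j^\ast$ of slightly larger sidelength, the $Q_j^\ast$ still cover $\mathcal O$, meet $\Omega$ only in the limit, and have bounded overlap (a dimensional constant $N_n$). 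Subordinate to $\{Q_j^\ast\}$ one builds a smooth partition of unity $\{\varphi_j\}$ with $0\le\varphi_j\le 1$, $\supp\varphi_j\subset Q_j^\ast$, $\sum_j\varphi_j\equiv 1$ on $\mathcal O$, and $|\nabla^\alpha\varphi_j|\le C_{n,\alpha}(\diam Q_j)^{-|\alpha|}$; all constants so far depend only on $n$.

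Next I would define the extension. For each $j$ choose $p_j\in\Omega$ with $\dist(p_j,Q_j)\lesssim\diam Q_j$, let $P_j(x):=\sum_{|\gamma|\le k}\frac{f^{(\gamma)}(p_j)}{\gamma!}(x-p_j)^{\gamma}$ be the degree-$k$ Taylor polynomial of the jet based at $p_j$, and set
\[
Ef(x):=\begin{cases} f^{(0)}(x), & x\in\Omega,\\ \sum_j\varphi_j(x)\,P_j(x), & x\in\mathcal O.\end{cases}
\]
Linearity of $E$ in the jet $(f^{(\alpha)})_{|\alpha|\le k}$ is immediate, and on $\mathcal O$ the sum is locally finite, so $Ef\in C^\infty(\mathcal O)$. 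It then remains to estimate derivatives on $\mathcal O$ and to check the matching across $\partial\Omega$.

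The interior estimate is the heart of the matter. On a fixed cube $Q_{j_0}$ I would use the cancellation $\sum_j\nabla^{\alpha}\varphi_j\equiv 0$ for $|\alpha|\ge 1$ to write, for $|\alpha|\le k$,
\[
\nabla^\alpha(Ef)=\sum_{\beta\le\alpha}\binom{\alpha}{\beta}\sum_j\bigl(\nabla^{\alpha-\beta}\varphi_j\bigr)\,\nabla^{\beta}\bigl(P_j-P_{j_0}\bigr).
\]
Only the $O_n(1)$ indices $j$ with $Q_j^\ast\cap Q_{j_0}\neq\emptyset$ contribute; for those, $\diam Q_j$, $\diam Q_{j_0}$, $|p_j-p_{j_0}|$ and $\dist(x,\Omega)$ for $x\in Q_{j_0}$ are all mutually comparable. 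The Whitney remainder hypothesis $|R_\alpha(x,y)|\le M|x-y|^{k+1-|\alpha|}$ then yields $|\nabla^{\beta}(P_j-P_{j_0})(x)|\lesssim M(\diam Q_{j_0})^{k+1-|\beta|}$ on $Q_{j_0}$, and multiplying by $|\nabla^{\alpha-\beta}\varphi_j|\lesssim(\diam Q_{j_0})^{-(|\alpha|-|\beta|)}$ gives $|\nabla^\alpha(Ef)|\le C M$ for $|\alpha|\le k$; taking one further difference shows $\nabla^k(Ef)$ is Lipschitz on $\mathcal O$ with constant $C M$, $C=C(n,k)$. I expect this bookkeeping — the interplay of the Whitney comparabilities with the orders of the Taylor remainders, and in particular the way the cancellation $\sum_j\nabla^\alpha\varphi_j=0$ defeats the apparent blow-up of $\nabla^{\alpha}\varphi_j$ — to be the main obstacle; everything else is routine once it is in place.

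Finally I would match the two pieces. For $x_0\in\Omega$ and $x\in\mathcal O$ with $\dist(x,\Omega)$ small, $|x-x_0|$ is comparable to $\dist(x,\Omega)$, so $|p_j-x_0|\lesssim|x-x_0|$ whenever $\varphi_j(x)\neq 0$. Subtracting the degree-$k$ Taylor polynomial of the jet based at $x_0$ and applying the Whitney condition to $P_j$ minus that polynomial (and to its derivatives), together with $\sum_j\varphi_j\equiv 1$, gives $|\nabla^\alpha(Ef)(x)-f^{(\alpha)}(x_0)|\le C M|x-x_0|^{k+1-|\alpha|}$ for $|\alpha|\le k$. Defining $(Ef)^{(\alpha)}$ to be $\nabla^\alpha(Ef)$ on $\mathcal O$ and $f^{(\alpha)}$ on $\Omega$, the defining Taylor-with-remainder relations of $\mathrm{Lip}^{(k,1)}(\R^n)$ hold for $x,y\in\Omega$ by hypothesis, for $x,y\in\mathcal O$ by the interior estimate, and for mixed $x,y$ by the boundary matching together with the triangle inequality and the interior estimate again. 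Hence $Ef\in\mathrm{Lip}^{(k,1)}(\R^n)$ with $\|Ef\|_{\mathrm{Lip}^{(k,1)}(\R^n)}\le C(n,k)\,\|f\|_{\mathrm{Lip}^{(k,1)}(\Omega)}$, and since $C(n,k)$ arose only from the Whitney decomposition and the partition of unity, it is independent of $\Omega$.
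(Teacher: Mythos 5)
This theorem is not proved in the paper at all: it is quoted verbatim as Theorem 4 in Chapter 6 of Stein's book, so the only comparison available is with the classical Whitney--Stein construction, which is exactly what you reconstruct (Whitney cubes of the complement, partition of unity with $|\nabla^\alpha\varphi_j|\lesssim(\diam Q_j)^{-|\alpha|}$, extension by $\sum_j\varphi_j P_j$ with $P_j$ the Taylor polynomial of the jet at a nearby point $p_j\in\Omega$, interior estimates via $P_j-P_{j_0}$, boundary matching against the Taylor polynomial of the jet at $x_0$). In that sense you are on the same route as the cited source, and the decomposition, the comparabilities, and the boundary matching are all set up correctly (modulo a small slip: your display for $\nabla^\alpha(Ef)$ omits the main term $\nabla^\alpha P_{j_0}$; one should first write $Ef=P_{j_0}+\sum_j\varphi_j(P_j-P_{j_0})$ on $Q_{j_0}$ using $\sum_j\varphi_j\equiv 1$).

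There is, however, one genuine gap: your interior bookkeeping does not give $|\nabla^\alpha(Ef)|\le CM$ on all of $\mathcal O$, and in fact that claim is false for the extension as you defined it. On a cube $Q_{j_0}$ the estimates you quote yield $|\nabla^{\alpha-\beta}\varphi_j|\,|\nabla^\beta(P_j-P_{j_0})|\lesssim M(\diam Q_{j_0})^{k+1-|\alpha|}$ and $|\nabla^\alpha P_{j_0}|\lesssim M(1+\diam Q_{j_0})^{k-|\alpha|}$, so they are of order $M$ only where $\diam Q_{j_0}\lesssim 1$, i.e.\ at bounded distance from $\Omega$; far from $\Omega$ the Whitney cubes are large and the bounds degrade. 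Concretely, for $\Omega=\{0\}$, $k=1$, jet $f^{(0)}(0)=0$, $f^{(e_1)}(0)=1$, your $Ef$ equals $x_1$ on $\R^n\setminus\{0\}$, which violates the sup bound $|(Ef)^{(0)}|\le CM$ that is part of the $\mathrm{Lip}^{(k,1)}(\R^n)$ norm as defined in the paper (and in Stein). The missing step is a truncation at unit scale: multiply the Whitney extension by a smooth cutoff $\theta(\dist(x,\Omega))$ (e.g.\ built from a regularized distance) equal to $1$ for $\dist(x,\Omega)\le 1$ and $0$ for $\dist(x,\Omega)\ge 2$. In the unit neighbourhood your estimates do give $|\nabla^\alpha(Ef)|\le CM$ and the Lipschitz bound on $\nabla^k(Ef)$, the cutoff preserves these bounds and the values of the jet on $\Omega$, and for $|x-y|\ge 1$ the required remainder inequalities $|R_\alpha(x,y)|\le CM|x-y|^{k+1-|\alpha|}$ follow trivially from the sup bounds. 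With this modification (and with the mixed-point case argued, as you indicate, by comparing with the full Taylor polynomial of the jet at a nearby boundary point rather than with $f^{(\alpha)}(x_0)$ alone), the argument closes and the constant depends only on $n$ and $k$, as required.
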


For a closed set $\Gamma\subset \R^n$, let
$d_{\Gamma}\in C^\infty(\R^n;[0,\infty))$ denote a regularized distance function, that
is, a function with the property that there exists a constant $C>0$ such that
\[
\begin{split}
  C^{-1}\dist(x,\Gamma)&\leq d_{\Gamma}(x)\leq C\dist(x,\Gamma)\quad \text{ for all
  }x\in \R^N\,.\\
  |\nabla^\alpha d_{\Gamma}(x)|&\leq C |d_{\Gamma}(x)|^{1-|\alpha|}\quad\text{ for all multiindices } \alpha\,.
\end{split}
\]
Such a regularized distance function exists by Theorem 2 of Chapter 6 in
\cite{stein2016singular}. We will use it to construct suitable cutoff functions
in Lemma \ref{lem:prepapprox} below. This lemma is a preparation for 
the approximation lemma, Lemma~\ref{lem:approx} below. 

\medskip

Let $ \phi\in C^\infty([0,\infty))$ such that $\phi(t)=0$ for
$t\leq\frac12$ and $\phi(t)=1$ for $t\geq 1$.

\begin{lemma}
\label{lem:prepapprox}
Let $p\geq 1$, $U\subset[0,1]^2$, 
$(u_\e)_{\e>0}$ a sequence in $W^{2,p}(U)$ that converges strongly to $u\in
W^{2,p}(U)$ for $\e\to 0$, 
and let
\[\Gamma\subset\left([0,1]\times\{0\}\cup\{0\}\times[0,1]\right)\cap \partial U\]
be a closed set of
positive $\H^1$ measure,
and
$u_0\in W^{2,p}(U)$ such that $u=u_0$ and $\nabla u=\nabla u_0$ on
$\Gamma$. Furthermore let 
\[
\tilde u_\e(x)=\phi\left(\frac{d_{\Gamma}(x)}{\e}\right)
u_\e(x)+\left(1-\phi\left(\frac{d_{\Gamma}(x)}{\e}\right) \right)u_0(x)\,.
\]
Then $\tilde u_\e=u_0$ and $\nabla \tilde u_\e=\nabla u_0$ on $\Gamma$,
and 
\[
\begin{split}
  \|\tilde u_\e- u\|_{W^{2,p}(U)}&\to 0\\
  \int_{U\cap\{x:d_\Gamma(x)\leq\e\}}\left(1+|\nabla^2 u_\e|^p+|\nabla u|^p\right)\d x&\to 0\,.
\end{split}
\]
\end{lemma}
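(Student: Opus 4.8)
The plan is to exploit that the cutoff factor $\phi(d_\Gamma(\cdot)/\e)$ is identically $0$ on a neighbourhood of $\Gamma$ and identically $1$ away from $\Gamma$, so that $\tilde u_\e$ agrees with $u_0$ on $\{d_\Gamma\le\e/2\}$, agrees with $u_\e$ on $\{d_\Gamma\ge\e\}$, and interpolates between the two only on the thin transition set $T_\e:=U\cap\{\e/2<d_\Gamma<\e\}$. Since $\phi(t)=0$ for $t\le\frac12$, both $\phi(d_\Gamma(x)/\e)$ and its gradient $\e^{-1}\phi'(d_\Gamma(x)/\e)\nabla d_\Gamma(x)$ vanish whenever $d_\Gamma(x)\le\e/2$; hence $\tilde u_\e\equiv u_0$ and $\nabla\tilde u_\e\equiv\nabla u_0$ on the open set $\{d_\Gamma<\e/2\}\supseteq\Gamma$, which is the first assertion.

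For the remaining two assertions the basic fact I would use is $\L^2(U\cap\{d_\Gamma\le\e\})\to0$ as $\e\downarrow0$: the sets $U\cap\{d_\Gamma\le\e\}$ decrease to $U\cap\{d_\Gamma=0\}$, and $\{d_\Gamma=0\}=\Gamma$ by the two-sided bound $C^{-1}\dist(\cdot,\Gamma)\le d_\Gamma\le C\dist(\cdot,\Gamma)$, while $\Gamma$ is $\L^2$-negligible, being a bounded subset of the union of the two coordinate axes. The integral estimate then follows termwise: $\int_{\{d_\Gamma\le\e\}}1\,\d x\to0$ by the above; $\int_{\{d_\Gamma\le\e\}}|\nabla u|^p\,\d x\to0$ by absolute continuity of the integral, since $|\nabla u|^p\in L^1(U)$; and $\int_{\{d_\Gamma\le\e\}}|\nabla^2u_\e|^p\,\d x\to0$ because the strong convergence $\nabla^2u_\e\to\nabla^2u$ in $L^p(U)$ makes the family $|\nabla^2u_\e|^p$ equi-integrable (estimate $\int_A|\nabla^2u_\e|^p\le 2^{p-1}\|\nabla^2u_\e-\nabla^2u\|_{L^p(U)}^p+2^{p-1}\int_A|\nabla^2u|^p$ and send $\L^2(A)\to0$).

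The substance is the convergence $\|\tilde u_\e-u\|_{W^{2,p}(U)}\to0$. I would start from the identity
\[
\tilde u_\e-u=\eta_\e\,(u_\e-u)-(1-\eta_\e)\,w,\qquad \eta_\e:=\phi(d_\Gamma/\e),\quad w:=u-u_0,
\]
differentiate up to second order by the Leibniz rule, and sort the terms into three groups. The terms $\eta_\e\nabla^j(u_\e-u)$, $j\le2$, are bounded in $L^p$ by $\|u_\e-u\|_{W^{2,p}(U)}\to0$; the terms $(1-\eta_\e)\nabla^jw$, $j\le2$, are supported in $U\cap\{d_\Gamma\le\e\}$ and tend to $0$ in $L^p$ by absolute continuity, since $\nabla^jw\in L^p(U)$. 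Every remaining term carries at least one derivative of $\eta_\e$, lives on $T_\e$, and has the form $\nabla^i\eta_\e\otimes\nabla^{j-i}g$ with $g\in\{u_\e-u,\,w\}$ and $1\le i\le j\le2$; on $T_\e$ the regularized-distance bounds $|\nabla^\alpha d_\Gamma|\le C|d_\Gamma|^{1-|\alpha|}$ together with $\e/2<d_\Gamma<\e$ give $|\nabla^i\eta_\e|\le C\e^{-i}$, so such a term is dominated by $C\e^{-i}\,|\nabla^{j-i}g|$ on $T_\e$.

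The hard part, and the main obstacle, is to beat these negative powers of $\e$. For $g=w$ one uses that $w$ \emph{and} $\nabla w$ vanish on $\Gamma$ in the trace sense, i.e.\ $w$ vanishes to second order on $\Gamma$: covering $T_\e\subseteq U\cap\{\dist(\cdot,\Gamma)\le C\e\}$ by the two strips of width $\sim\e$ adjacent to the two axis pieces of $\Gamma$ and integrating the appropriate first or second derivatives of $w$ outward from $\Gamma$, one obtains the scaled Poincar\'e-type bound $\|\nabla^{j-i}w\|_{L^p(T_\e)}\le C\e^{\,2-(j-i)}\|\nabla^2w\|_{L^p(\widehat T_\e)}$ for $j-i\le1$, where $\widehat T_\e$ is a slightly enlarged neighbourhood of $\Gamma$ of vanishing Lebesgue measure; multiplying by $\e^{-i}$ leaves $\e^{\,2-j}\|\nabla^2w\|_{L^p(\widehat T_\e)}\le\|\nabla^2w\|_{L^p(\widehat T_\e)}\to0$. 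The delicate point inside this step is setting up such an inequality near the corner of $U$, using that off the corner $U$ lies on one side of the flat boundary pieces carrying $\Gamma$. For $g=u_\e-u$, i.e.\ the terms $\e^{-i}\|\nabla^{j-i}(u_\e-u)\|_{L^p}$ with $i\ge1$, one uses the quantitative rate at which the approximation approaches $u$: for the mollifications $u_\e=u*\rho_{\delta_\e}$ with symmetric kernel and $\delta_\e/\e\to0$ to which the lemma is applied, $\|\nabla^{j-i}(u_\e-u)\|_{L^p(U)}\le C\delta_\e^{\,2-(j-i)}\|\nabla^2u\|_{L^p(U)}$, whence $\e^{-i}\|\nabla^{j-i}(u_\e-u)\|_{L^p}\le C\,\delta_\e^{\,2-j}(\delta_\e/\e)^i\|\nabla^2u\|_{L^p}\to0$ since $2-j\ge0$. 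Adding up the three groups yields $\|\tilde u_\e-u\|_{W^{2,p}(U)}\to0$; everything apart from this scaled Poincar\'e estimate on $T_\e$ is routine bookkeeping on sets of vanishing measure.
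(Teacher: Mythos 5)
Your handling of claims (i) and (iii), and of the boundary-layer terms in which derivatives of the cutoff fall on $w=u-u_0$, is essentially the paper's argument: the paper also splits into the bulk and the layer $U\cap\{d_\Gamma\le\e\}$, and its ``integrals in the $x_i$-direction plus Fubini'' device is exactly your scaled Poincar\'e inequality exploiting that $w$ and $\nabla w$ vanish on $\Gamma$. The genuine issue is the remaining group of terms, $\nabla^2\eta_\e\,(u_\e-u)$ and $\nabla\eta_\e\otimes\nabla(u_\e-u)$ with $\eta_\e=\phi(d_\Gamma/\e)$: here you do not derive smallness from the hypotheses of the lemma but instead assume that $u_\e$ is a mollification of $u$ with a symmetric kernel at a scale $\delta_\e\ll\e$. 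That is an additional quantitative hypothesis, so as a proof of the lemma as stated your argument has a gap; and it does not match the paper's application either, since in Lemma \ref{lem:approx} the lemma is invoked with the mollification scale equal to the cutoff scale $\e$, with a kernel that is not assumed symmetric, and composed with a $C^2$ chart (moreover there the lemma's $u_0$ is the limit $u$ itself, so your $w\equiv 0$ and these cross terms are the whole content of the statement).

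To be fair, the deficiency lies in the statement rather than in your instinct: with only qualitative convergence $u_\e\to u$ in $W^{2,p}$ and no rate tied to $\e$, the conclusion is actually false. Take $U=(0,1)^2$, $\Gamma=[0,1]\times\{0\}$, $u=u_0=0$, $u_\e\equiv\e^{1/2}$; then $\tilde u_\e=\e^{1/2}\phi(d_\Gamma/\e)$ passes from $0$ to $\e^{1/2}$, with vanishing normal derivative at both ends, across a layer of width $\lesssim\e$, and a one-dimensional Fubini argument gives $\|\nabla^2\tilde u_\e\|_{L^p(U)}\gtrsim\e^{1/2+1/p-2}\to\infty$ for every $p\ge 1$. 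The paper's own ``straightforward but lengthy computation that we omit'' only exploits vanishing on $\Gamma$ and is silent on exactly these terms, so you have correctly located the crux; but the clean repair is not to import the mollification structure into this proof. Rather, one should add to the lemma a hypothesis such as $\e^{-2}\|u_\e-u\|_{L^p(\{d_\Gamma\le\e\})}+\e^{-1}\|\nabla(u_\e-u)\|_{L^p(\{d_\Gamma\le\e\})}\to 0$ and verify it where the lemma is used; there it follows from local mollification estimates on the $C\e$-layer, e.g.\ $\|\varphi_\e*f-f\|_{L^p(A)}\lesssim\e\|\nabla f\|_{L^p(A_{C\e})}$ applied to $u$ and to $\nabla u$, provided the kernel has vanishing first moment, with constants stable under the $C^2$ charts; no scale separation $\delta_\e\ll\e$ is needed. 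As written, your proposal proves a modified statement, not the stated one.
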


\begin{proof}
The first claim is obvious. To show the second  claim, it suffices to show
that $\|\nabla^2(\tilde u_\e-u)\|_{L^p}\to 0$, since  $\tilde u_\e-u$ and its gradient vanish on $\Gamma$. This is a straightforward
computation, that estimates the integral over $|\nabla^2(\tilde u_\e-u)|^p$
in the ``bulk'' $U\cap\{x:d_\Gamma(x)>\e\}$  and in boundary layer
\[
U\cap\left\{x:d_\Gamma(x)\leq\e\right\}\subset (U\cap \{x:x_1<C\e\})\cup (U\cap
\{x:x_2<C\e\})\,,
\]
where the constant $C>0$ is chosen appropriately.
The contribution of the bulk vanishes in the limit
$\e\to 0$ due to the assumption. Writing $\tilde u_\e-u$ and its gradient as
integrals in $x_i$-direction in the set $U\cap \{x:x_i<C\e\}$ for
$i=1,2$ and using
Fubini's theorem, the proof that the contribution of the boundary layer vanishes
in the limit is a straightforward but lengthy computation that we omit here for
the sake of brevity. The third claim is
an immediate consequence of the $L^p$ integrability of $\nabla^2 u$ and the
strong convergence $\nabla^2u_\e\to\nabla^2 u$ in $L^p$.
\end{proof}

\begin{lemma}
\label{lem:approx}
Let $\Omega\subset\R^2$ satisfy Definition \ref{def:H1}, and let $p\in
[1,\infty)$.  Furthermore let $u\in \{u_0\}+W^{2,p}_0(\Omega)$ and $\delta>0$. Then there exists $w\in 
  \{u_0\}+W^{2,p}_0(\Omega)$ and $\Omega_w\subset\Omega$ such that $\Omega_w$ is
  the union of mutually disjoint closed cubes, $w$ is
  piecewise a polynomial of degree $k$ on $\Omega_w$, and furthermore
\[
\begin{split}
  \|u-w\|_{W^{2,p}(\Omega)}&<\delta\,,\\
\int_{\Omega\setminus\Omega_w}(1+|\nabla^2u|^p+|\nabla^2w|^p)\d x&<\delta\,.
\end{split}
\]
\end{lemma}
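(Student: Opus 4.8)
The plan is to assemble $w$ by a grid construction in the interior together with a boundary gluing. As a preliminary reduction, since $u-u_0\in W^{2,p}_0(\Omega)$ one picks $v\in C^\infty_c(\Omega)$ with $\|v-(u-u_0)\|_{W^{2,p}(\Omega)}$ as small as desired and replaces $u$ by $u_0+v$; this changes every quantity in the conclusion by an arbitrarily small amount and arranges that $u=u_0$ on a fixed neighbourhood of $\partial\Omega$, and by Theorem~\ref{thm:steinext} one may regard $u\in W^{2,p}(\R^2)$. From now on the task is to approximate such a $u$ by a piecewise‑polynomial $w$ that still coincides with $u_0$ near $\partial\Omega$, so that $w\in\{u_0\}+W^{2,p}_0(\Omega)$.

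The first real step is a Whitney replacement: on the set where the Hardy--Littlewood maximal function of $\nabla^2u$ stays below a large threshold $t$, the $k$‑jet of $u$ built from precise representatives satisfies the compatibility estimates of Theorem~\ref{thm:stein} with constant comparable to $t$, and extending it yields $\bar u\in\mathrm{Lip}^{(k,1)}(\R^2)$ — in particular with derivatives of order $\le k$ bounded, with bound depending only on $t$ — such that $\L^2(\{\bar u\ne u\})\to0$ and $\|u-\bar u\|_{W^{2,p}(\R^2)}\to0$ as $t\to\infty$; we also keep $\bar u=u_0$ near $\partial\Omega$. The point of invoking Whitney's theorem rather than mere mollification is that $\|\bar u\|_{\mathrm{Lip}^{(k,1)}}$ can be frozen before the grid below is refined, which is essential for the scale balancing described at the end.

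Next comes the grid construction. For small $h$, take the closed cubes of side $h$ contained in $\Omega$; on a cube $Q$ let $P_Q$ be the Taylor polynomial of $\bar u$ of degree $k$ at the centre of $Q$, so that $\|\nabla^2(\bar u-P_Q)\|_{L^\infty(Q)}\lesssim h^{k-1}$, $\|\nabla^2P_Q\|_{L^\infty(Q)}\lesssim1$ uniformly in $h$, and $|P_Q-P_{Q'}|\lesssim h^{k+1}$, $|\nabla(P_Q-P_{Q'})|\lesssim h^{k}$, $|\nabla^2(P_Q-P_{Q'})|\lesssim h^{k-1}$ on overlapping cubes. Let $Q^-\Subset Q$ be the concentric cube of side $h(1-\rho)$, with $\rho=\rho(h)\to0$, choose a partition of unity $\{\psi_Q\}$ with $\psi_Q\equiv1$ on $Q^-$ and $|\nabla^j\psi_Q|\lesssim(\rho h)^{-j}$, and set $w_h:=\sum_Q\psi_QP_Q$. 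Then $w_h$ is a polynomial of degree $k$ on each core $Q^-$, the estimates above give $\|\nabla^2w_h\|_{L^\infty(\text{transition})}\lesssim 1+\rho^{-2}h^{k-1}$, and the transition region $\{\psi_Q\not\equiv1\}$ has total measure $\lesssim\rho$. Choosing $h^{(k-1)/2}\lesssim\rho\to0$ (possible for $k\ge2$) keeps $\nabla^2w_h$ bounded on the transition region uniformly in $h$ while its measure vanishes, so that $w_h\to\bar u$ in $W^{2,p}$ and $\int_{\{\psi_Q\not\equiv1\}}|\nabla^2w_h|^p\to0$. Finally, the boundary is handled by Lemma~\ref{lem:prepapprox}: in each of the finitely many charts of Definition~\ref{def:H1}, with $\Gamma$ the straightened boundary arc (or the pair of arcs at a corner), apply that lemma with $u_0$ as the boundary datum and the sequence $w_h$ in the role of $u_\e$ (at a cutoff scale $\e$ with $h\ll\e\to0$); this produces $\tilde w_h=\phi(d_\Gamma/\e)\,w_h+(1-\phi(d_\Gamma/\e))\,u_0$, which equals $u_0$ on $\{d_\Gamma\le\e/2\}$, equals $w_h$ off $\{d_\Gamma\le\e\}$, converges to $u$ in $W^{2,p}$, and satisfies $\int_{\{d_\Gamma\le\e\}}(1+|\nabla^2w_h|^p+|\nabla u|^p)\to0$. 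Carrying this out chart by chart yields the final $w$, which is a polynomial of degree $k$ precisely on $\Omega_w:=\bigcup\{Q^-:Q^-\cap\{d_\Gamma\le\e\}=\emptyset\}$ (a union of mutually disjoint closed cubes), and the required bound on $\int_{\Omega\setminus\Omega_w}(1+|\nabla^2u|^p+|\nabla^2w|^p)$ follows from the transition estimate, Lemma~\ref{lem:prepapprox}, absolute continuity of $\int|\nabla^2u|^p$, and choosing $t$, then $h$ (hence $\rho$, $\e$) suitably.

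The step I expect to be the main obstacle is precisely this balancing of the two interior length scales in the grid construction: the transition width $\rho h$ must be at least of order $h^{(k+1)/2}$ so that the partition‑of‑unity blend has second derivatives bounded independently of $h$ — which is what keeps $\int_{\Omega\setminus\Omega_w}|\nabla^2w|^p$ small — yet $\rho$ must tend to $0$ so that the transition region, and likewise the boundary collar, carry vanishing Lebesgue measure, which controls the $1$‑ and $|\nabla^2u|^p$‑terms on $\Omega\setminus\Omega_w$. Reconciling these forces the order of limits ``first $t\to\infty$, then $h\to0$'' and is exactly why $\bar u$ must be produced with a controlled $\mathrm{Lip}^{(k,1)}$‑norm by Whitney's theorem before the mesh is refined. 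The remaining ingredients — the Taylor estimates on cubes and the Fubini computation inside the collar — are routine and of the same type as those already carried out in the proof of Lemma~\ref{lem:prepapprox}.
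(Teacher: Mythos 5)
Your construction has a genuine gap at its core step, and it is exactly the step you flag as the main obstacle. The maximal-function/Whitney truncation of a $W^{2,p}$ function only produces $\bar u\in \mathrm{Lip}^{(1,1)}=W^{2,\infty}$ with $\|\nabla^2\bar u\|_{L^\infty}\lesssim t$; it gives \emph{boundedness} of $\nabla^2\bar u$, not any modulus of continuity for it (and certainly not a bound on third-order differences). But the rates you then use in the grid construction, namely $\|\nabla^2(\bar u-P_Q)\|_{L^\infty(Q)}\lesssim h^{k-1}$ and $|P_Q-P_{Q'}|\lesssim h^{k+1}$, $|\nabla(P_Q-P_{Q'})|\lesssim h^{k}$, $|\nabla^2(P_Q-P_{Q'})|\lesssim h^{k-1}$ for degree-$2$ Taylor polynomials at neighbouring centres, require precisely that extra order of smoothness (a Lipschitz or at least uniformly continuous $\nabla^2\bar u$). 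With only $W^{2,\infty}$ control the correct bounds are $|P_Q-P_{Q'}|\lesssim t h^{2}$, $|\nabla(P_Q-P_{Q'})|\lesssim t h$, $|\nabla^2(P_Q-P_{Q'})|\lesssim t$, with no gain in $h$. Feeding these into your partition-of-unity blend gives $\|\nabla^2 w_h\|_{L^\infty(\mathrm{transition})}\lesssim t\,\rho^{-2}$, hence $\int_{\mathrm{transition}}|\nabla^2 w_h|^p\lesssim t^p\rho^{1-2p}$, which blows up as $\rho\to0$ for every $p\geq1$; no choice of $\rho(h)$, and no ordering of the limits in $t$ and $h$, repairs this, because the needed factor $h^{k-1}$ simply is not there. (Smaller issues: the truncation does not automatically keep $\bar u=u_0$ near $\partial\Omega$, and your application of Lemma \ref{lem:prepapprox} needs the approximating sequence to converge to a function agreeing with $u_0$ and $\nabla u_0$ on $\Gamma$, so a diagonal argument in $(t,h)$ would be required; also the density of $C^\infty_c(\Omega)$ in $W^{2,p}_0(\Omega)$ as defined via vanishing traces is used without comment.)

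The paper avoids this trap in two ways. First, it does not truncate but mollifies at a scale $\e$ that is \emph{fixed before} the grid scale $h$ is chosen, so the function $u_\e$ is genuinely smooth and the Taylor-increment estimates hold with an $\e$-dependent constant ($|\nabla^\alpha\tilde u_\e-\nabla^\alpha u_\e|\leq C(\|u_\e\|_{C^3})\,h^{2-|\alpha|}$). Second, it does not blend the cube polynomials with a partition of unity (which costs $(\rho h)^{-2}$) but places the cubes with mutual separation $h^{4/3}$ and uses Whitney's extension theorem (Theorem \ref{thm:stein}, with $k=1$) to extend the piecewise-polynomial jet together with the boundary jet, obtaining a $W^{2,\infty}$ bound that is uniform in $h$; the complement of the cubes then contributes only $O(h^{1/3})\|u_\e\|^p_{W^{2,\infty}}$. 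If you want to keep your partition-of-unity scheme, you must replace the truncation by an approximation whose second gradient has a modulus of continuity — e.g.\ mollification as in the paper, or a Lusin-type approximation by $C^2$ functions — and then rebalance $\rho$ against that modulus rather than against the unjustified power $h^{(k-1)/2}$.
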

\begin{proof}
  Let $v:=u-u_0\in W^{2,p}_0(\Omega)$. We may extend $u_0$  to $\R^2$ such
    that 
\[
\begin{split}
  \|u_0\|_{W^{2,p}(\R^2)}&\lesssim \|u_0\|_{W^{2,p}(\Omega)}\,.
        \end{split}
\]
Also, $v$ may be understood as an element of $W^{2,p}(\R^2)$ by a trivial extension.
Let $\{\Omega_i:i=1,\dots,P\}$ be an open cover of $\overline \Omega$, let $\{\psi_i:i=1,\dots,P\}$ be a
subordinate partition of unity, and let $\xi_i:\Omega_i\to \xi_i(\Omega_i)=:\tilde \Omega_i$,
$i=1,\dots,P$, be a set of $C^2$ diffeomorphisms such that $\xi_i(\Omega\cap
\Omega_i)\subset (0,1)^2$ and $\Gamma_i:=\xi_i(\Omega_i\cap\partial \Omega)\subset
[0,1]\times \{0\}\cup\{0\}\times [0,1]$.
(Such $\xi_i$ exist by the assumption on $\Omega$.)

\medskip

Let $\varphi\in C_c^\infty(\R^n)$ with $\int\varphi\d x =1$, and
$\varphi_\e=\e^{-n}\varphi(\cdot/\e)$. 
For every $i$, we have that
\[
\|(u- (\varphi_\e* u))\circ\xi_i^{-1}\|_{W^{2,p}(\tilde \Omega_i)}\to 0.
\]
By the previous lemma, there exists $\hat u^{(i)}_\e\in {W^{2,p}(\tilde \Omega_i)}$
such that $\hat u^{(i)}_\e=(\varphi_\e * u)\circ \xi_i^{-1}$ on $\tilde
\Omega_i\cap\{x:d_{\Gamma_i(x)}\geq \e\}$,
\[
 \hat u^{(i)}_\e= u\circ\xi_i^{-1}, \quad \nabla \hat u^{(i)}_\e= \nabla
 u\circ\xi_i^{-1} \quad\text{ on } [0,1]\times\{0\}\cap \tilde \Omega_i\,,
\]
and additionally,
\[
\begin{split}
  \|\hat u^{(i)}_\e- u\circ \xi_i^{-1}\|_{W^{2,p}(U)}&\to 0\\
  \int_{\tilde \Omega_i\cap\{x:d_{\Gamma_i(x)}<\e\}}1+|\nabla^2 \hat u_\e^{(i)}|^p+|\nabla u\circ\xi_i^{-1}|^p\d x&\to 0\,.
\end{split}
\]
Setting $u_\e^{(i)}:=\hat u^{(i)}_\e\circ \xi_i$, we obviously have that
$u_\e^{(i)}=\varphi_\e\circ u$ on $\Omega_i\setminus \xi_i^{-1}(\tilde
\Omega_i\cap\{x:d_{\Gamma_i(x)}< \e\})$, and
\[
\begin{split}
 u^{(i)}_\e= u, \quad \nabla  u^{(i)}_\e&= \nabla
 u \quad\text{ on } \Omega_i\cap\partial \Omega\,,\\
  \| u^{(i)}_\e- u\|_{W^{2,p}(\Omega_i)}&\to 0\\
  \int_{ \xi_i^{-1}(\tilde \Omega_i\cap\{x:d_{\Gamma_i(x)}<\e\})}1+|\nabla^2  u_\e^{(i)}|^p+|\nabla u|^p\d x&\to 0\,.
\end{split}
\]
Setting 
\[
\begin{split}
  \Omega_\e&:=\Omega\setminus \bigcup_{i=1}^N \xi_i^{-1}(\tilde \Omega_i\cap\{x:d_{\Gamma_i(x)}<\e\})\\
  u_\e&:=\sum_i \psi_i u_\e^{(i)}\,,
\end{split}
\]
we have $u_\e=\varphi_\e* u$ on $\Omega_\e$ and 
\[
\begin{split}
 u_\e= u, \quad \nabla  u_\e&= \nabla
 u \quad\text{ on } \partial \Omega\,,\\
  \| u_\e- u\|_{W^{2,p}(\Omega)}&\to 0\\
  \int_{\Omega\setminus \Omega_\e}1+|\nabla^2  u_\e|^p+|\nabla u|^p\d x&\to 0\,.
\end{split}
\]
and hence we may fix $\e$ such that  
\[
\begin{split}
  \|u_\e-u\|_{W^{k,p}(\Omega)}&<\delta/2\\
  \int_{\Omega\setminus\Omega_\e}(1+ |\nabla^k u|^p+|\nabla^k u_\e|^p)\d
  x&<\delta/2\,.
\end{split}
\]
Note that $\Omega_\e$ is
closed and 
$u_\e\in C^\infty(\Omega_\e)$. 

Let $0<h\ll 1$ to be chosen later, and let $Q_i$,
$i=1,\dots,N$ be mutually disjoint closed cubes of
sidelength $h$ contained in $\Omega_\e$  such that 
\[
\begin{split}
  \dist(Q_i,Q_j)&\geq h^{4/3} \quad\text{ for all }i,j=1,\dots,N\,, i\neq j\\
  \dist(\partial \Omega_\e,Q_i)&\geq h^{4/3} \quad\text{ for all }i=1,\dots,N\\
  \L^n\left(\Omega_\e\setminus \bigcup_i Q_i\right)&\leq C h^{1/3}\,.
\end{split}
\]
This is always possible assuming that $h$ is small enough. Let $x_i$ denote the midpoint of $Q_i$. Then we define $\tilde u_\e$ to be the
Taylor polynomial of degree two at $x_i$ on each $Q_i$, 
\[
\tilde u_\e(y)= \sum_{|\alpha|\leq 2} \nabla^\alpha u_\e(x_i) \frac{(y-x_i)^\alpha}{\alpha!}\quad\text{ for }y\in Q_i\,.
\] 
Let $V=(\cup_i Q_i)\cup\partial\Omega_\e$. We claim that there exists an extension of $\tilde u_\e$ from $V$ to $\Omega_\e$
such that $\|\tilde u_\e\|_{W^{2,\infty}(\Omega_\e)}\lesssim
\|u_\e\|_{W^{2,\infty}({\Omega_\e})}$. In order to prove our claim, we invoke   Theorem
\ref{thm:stein} with $k=1$. We verify that $\tilde u_\e\in
\mathrm{Lip}^{(1,1)}(V)$: Firstly, we have for all $y\in V$ and all
multiindices $\alpha$ with $|\alpha|\leq 1$, 
\begin{equation}
\begin{split}
  |\nabla^\alpha \tilde u_\e(y)-\nabla^\alpha u_\e(y)|\leq C h^{2-|\alpha|}\,,
\end{split}\label{eq:33}
\end{equation}
where the constant $C$ depends on $\|u_\e\|_{C^{3}(\Omega_\e)}$.
Furthermore, we have $u_\e\in
\mathrm{Lip}^{(1,1)}(\Omega_\e)=C^{1,1}(\Omega_\e)$, namely there exists a
constant $0<M_1\lesssim \|u_\e\|_{C^{1,1}(\Omega_\e)}$ such that
for all $x,y\in {\Omega_\e}$ and all multiindices $\alpha$ with $|\alpha|\leq 1$ we have
\begin{equation}
\begin{split}
 \nabla^\alpha u_\e(y)&=\sum_{|\alpha+\beta|\leq 1}\nabla^{\alpha+\beta} u_\e(x)\frac{(y-x)^\beta}{\beta!}+R_\alpha(x,y)
\end{split}\label{eq:23}
\end{equation}
with 
\[
|\nabla^\alpha u_\e(x)|<M_1\,,\quad
|R_\alpha(x,y)|<M_1|x-y|^{2-|\alpha|}\,.
\]
Now let $x,y\in Q_i$. Then we have for all multiindices $\alpha$ with
$|\alpha|\leq 1$, 
\begin{equation}
\begin{split}
  \nabla^\alpha \tilde u_\e(y)&=\sum_{|\alpha+\beta|\leq 2}\nabla^\alpha u_\e(x)\frac{(y-x)^{\beta}}{\beta!}\,.
\end{split}
\label{eq:43}
\end{equation}
Next let $x\in Q_i,y\in Q_j$ with $i\neq j$, or $x\in\partial\Omega$, $y\in
Q_i$. In this case, we have $|x-y|\geq h^{4/3}$. By inserting \eqref{eq:33} in
\eqref{eq:23}, we obtain
\begin{equation}
\begin{split}
 \nabla^\alpha \tilde u_\e(y)&= \sum_{|\alpha+\beta|\leq
   1}\nabla^{\alpha+\beta}\tilde
 u_\e(x)\frac{(y-x)^{\beta}}{\beta!}+O\left(h^{3-|\alpha|}\right)+
 R_\alpha(x,y)\\
&=  \sum_{|\alpha+\beta|\leq
   1}\nabla^{\alpha+\beta}\tilde
 u_\e(x)\frac{(y-x)^{\beta}}{\beta!}+
 \tilde R_\alpha(x,y)\,,
\end{split}
\label{eq:41}
\end{equation}
where we have introduced 
$\tilde R_\alpha=R_\alpha(x,y)+O(h^{3-|\alpha|})$, which by $|x-y|\geq
h^{4/3}$ implies
\[
|\tilde R_\alpha(x,y)-R_\alpha(x,y)|=O(h^{3-|\alpha|})= o(1)|x-y|^{2-|\alpha|}\,,
\]
where the last estimate holds since for all multiindices $\alpha$ with
$|\alpha|\leq 1$, we have
\[
\frac{4}{3}<\frac{3-|\alpha|}{2-|\alpha|}\,.
\]
Summarizing \eqref{eq:43} and \eqref{eq:41}, we have proved that $\tilde u_\e\in \mathrm{Lip}^{(1,1)}(V)$, with 
\[
|\nabla^\alpha\tilde u_\e(x)|<M_2\,,\quad
|\tilde R_\alpha(x,y)|<M_2|x-y|^{2-|\alpha|}\,,
\]
where
\[
M_2\leq M_1+o(1)\lesssim \|u_\e\|_{C^{1,1}({\Omega_\e})}\lesssim
\|u_\e\|_{W^{2,\infty}({\Omega_\e})}\,.
\]
By Theorem \ref{thm:stein}, there exists an extension of $\tilde u_\e$ to
$\mathrm{Lip}^{(1,1)}({\Omega_\e})$, with 
\[
\|\tilde u_\e\|_{W^{2,\infty}({\Omega_\e})}\lesssim \|u_\e\|_{W^{2,\infty}({\Omega_\e})}\,.
\]
Comparing the extension with $u_\e$, we have the estimates
\begin{equation}
\begin{split}
   \int_{\Omega_\e\setminus V}|\nabla^2 \tilde u_\e-\nabla^2 u_\e|^p\d x
  &\lesssim h^{1/3}\|
  u_\e\|_{W^{2,\infty}(\Omega_\e)}^p\,,\\
  \int_{V}|\nabla^2 \tilde u_\e-\nabla^2 u_\e|^p\d x 
&\lesssim  \L^2(V)h^p\,.
\end{split}\label{eq:54}
\end{equation}
Choosing $h$ small enough, we have
\[
\int_{{\Omega_\e}\setminus V}(1+|\nabla^k u|^p)\d x<\delta/3\,,
\]
and 
$\|\tilde u_\e-u_\e\|_{W^{2,p}(\Omega_\e)}<\delta/2$.
We claim that the function $w\in W^{2,p}(\Omega)$, defined by
\[
w(x)=\begin{cases}\tilde u_\e&\text{ if }x\in\Omega_\e\\
u_\e&\text{ else }\end{cases}
\]
satisfies all the properties that are stated in the lemma.
Indeed, we have $w=u_0$ and $\nabla w=\nabla u_0$ on $\partial\Omega$,   $w$ is a
polynomial of degree $2$ on  on $\Omega_w:=\cup_i Q_i$, and 
\[
\begin{split}
  \|w-u\|_{W^{2,p}(\Omega)}&\leq \|u-u_\e\|_{W^{2,p}(\Omega)}+\|u_\e-\tilde u_\e\|_{W^{2,p}(\Omega)}<\delta\,,\\
\int_{\Omega\setminus\Omega_w}(1+|\nabla^2 u|^p+|\nabla^2 w|^p)\d x&\leq
2\delta/3+Ch^{1/3}\|
  u_\e\|_{W^{2,\infty}(\Omega_\e)}^p<\delta\,.
  \end{split}
\]
This proves the lemma.
\end{proof}

\begin{proof}[Proof of Theorem \ref{thm:relax2}]
Let $u\in u_0+W^{2,p}_0({\Omega})$. By Lemma \ref{lem:approx}, there exists
$w\in u_0+W^{2,p}_0({\Omega})$ and ${\Omega}_w\subset{\Omega}$ such that ${\Omega}_w$ is
a union of mutually disjoint closed cubes, ${\Omega}_w=\cup_{i=1}^NQ_i$, $w$ is
  piecewise a polynomial of degree $2$ on ${\Omega}_w$, and furthermore
\[
\begin{split}
  \|u-w\|_{W^{2,p}({\Omega})}&<\e/2\,,\\
\int_{{\Omega}\setminus{\Omega}_w}1+|\nabla^2 w|^p+|\nabla^2 u|^p\d x&<\e/2\,.\end{split}
\]
For $i \in \{1,\dots,N\}$, 
choose $\tilde \xi_i\in W_0^{2,\infty}([-1/2,1/2]^2;\R^m)$ such that
\[
\int_{[-1/2,1/2]^2} f(\nabla^2 w(x_i)+\nabla^2\tilde \xi_i(x))\d x< Q_2f(\nabla^2 w(x_i))+\frac{\e}{N\L^2(Q_i)}\,,
\]
where $x_i$ denotes the center of the cube $Q_i$.
We identify $\tilde \xi_i$ with its periodic extension to $\R^2$. Let $d_i$
denote the sidelength of the cube $Q_i$, and
let $M_i\in\N$ to be chosen later. We define $\xi_i\in W^{2,p}_0(Q_i;\R^m)$ by
\[
\xi_i(x)=\left(\frac{d_i}{M_i}\right)^{2}\tilde\xi_i\left(\frac{M_i(x-x_i)}{d_i}\right)\,.
\]
Then we have
\[
\begin{split}
  \int_{Q_i}f(\nabla^2 w(x_i)+\nabla^2\xi_i)\d x&=\L^2(Q_i)\int_{[0,1]^2} f(\nabla^2 w(x_i)+\nabla^2\tilde \xi_i(x))\d x\\
&  < \L^2(Q_i)\,Q_2f(\nabla^2 w(x_i))+\frac{\e}{N}\\
&= \int_{Q_i}Q_2f(\nabla^2 w(x))\d x +\frac{\e}{N}\,.
\end{split}
\]
Choosing $M_i$ large enough, we may assume
\[
\|\xi_i\|_{W^{1,p}(Q_i;\R^m)}^p<\frac{(\e/2)^p}{N}\,.
\]
Now the function 
\[
v(x)=\begin{cases} w+\xi_i&\text{ on }Q_i\\ w&\text{ on }{\Omega}\setminus
    {\Omega}_w\end{cases}
\]
has all the required properties.
\end{proof}

\section{Proof of Theorem \ref{thm:Q2F}}
\label{sec:proof-theor-refthm:q}

For the convenience of the reader, we repeat the statement. We set
\[
\bar G_\lambda(\sigma):=\begin{cases}2\sqrt{\lambda} \rho^0(\sigma)-2|\det \sigma| & \text{ if }\rho^0(\sigma)\leq
  \sqrt{\lambda}\\
 |\sigma|^2+\lambda & \text{ else,}\end{cases}
\]
and the theorem we want to prove is

\begin{theorem}
\label{thm:appthm}
  We have
\[
Q_2F_\lambda(\sigma)=\bar G_\lambda(\sigma)\,.
\]
\end{theorem}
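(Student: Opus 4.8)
The plan is to prove the two inequalities $Q_2F_\lambda\le\bar G_\lambda$ and $Q_2F_\lambda\ge\bar G_\lambda$ separately, exploiting two elementary facts throughout. First, in two dimensions $\sigma\mapsto\det\sigma$ is a null Lagrangian for second gradients: with $Q:=[-\tfrac12,\tfrac12]^2$ one has $\int_Q\det(\sigma+\nabla^2\varphi)\,\d x=\det\sigma$ for every $\varphi\in W^{2,\infty}_0(Q)$, because $\det(\sigma+\nabla^2\varphi)=\det\sigma+\cof\sigma:\nabla^2\varphi+\det\nabla^2\varphi$, the linear term integrates to $0$ since $\int_Q\nabla^2\varphi\,\d x=0$, and $\det\nabla^2\varphi=\div\big(\partial_1\varphi\,(\partial_{22}\varphi,-\partial_{12}\varphi)\big)$ is a divergence. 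Second, the identity $\rho^0(\sigma)^2=|\sigma|^2+2|\det\sigma|$ gives $\bar G_\lambda\le F_\lambda$ everywhere: if $\rho^0(\sigma)\le\sqrt\lambda$ then $\bar G_\lambda(\sigma)=\lambda+|\sigma|^2-\big(\sqrt\lambda-\rho^0(\sigma)\big)^2\le\lambda+|\sigma|^2$, and $\bar G_\lambda(0)=0=F_\lambda(0)$.

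\textbf{Upper bound.} If $\rho^0(\sigma)>\sqrt\lambda$ there is nothing to prove, since $\bar G_\lambda(\sigma)=F_\lambda(\sigma)\ge Q_2F_\lambda(\sigma)$. If $\rho^0(\sigma)\le\sqrt\lambda$, I would diagonalise $\sigma=\diag(a,b)$ and build a rank-two sequential laminate for the ``stress'' field $\tau=\sigma+\nabla^2\varphi$: a first-level lamination (layers normal to $e_1$, jump $\propto e_1\otimes e_1$) between $\tau=\diag(a/\theta,b)$ on volume fraction $\theta$ and $\tau=\diag(0,b)$, followed by a second-level lamination of the latter phase (at a finer scale, layers normal to $e_2$, jump $\propto e_2\otimes e_2$) between $\tau=\diag(0,b/\mu)$ on fraction $\mu$ and the void $\tau=0$. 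Both jumps are rank-one symmetric, so this is realised by a $Q$-periodic $\varphi$ with $\nabla^2\varphi$ bounded; localising inside $Q$ and interpolating to zero in a thin collar produces a competitor in $W^{2,\infty}_0(Q)$, and since the transition layers have vanishing measure and $F_\lambda(\tau)\le\lambda+|\tau|^2$ is bounded there while $F_\lambda=0$ on the void phase,
\[
Q_2F_\lambda(\sigma)\le\theta\Big(\lambda+\frac{a^2}{\theta^2}+b^2\Big)+(1-\theta)\,\mu\,\Big(\lambda+\frac{b^2}{\mu^2}\Big)\,.
\]
Minimising over $\mu\in(0,1]$ (optimum $\mu=|b|/\sqrt\lambda$) and then over $\theta\in(0,1]$ (optimum $\theta=|a|/(\sqrt\lambda-|b|)$, both admissible precisely because $|a|+|b|\le\sqrt\lambda$) yields $2\sqrt\lambda(|a|+|b|)-2|ab|=2\sqrt\lambda\,\rho^0(\sigma)-2|\det\sigma|=\bar G_\lambda(\sigma)$.

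\textbf{Lower bound.} Here I would run the translation method with the quasiaffine quadratic $c\,\det$, optimised over its coefficient. For $|c|<2$ the quadratic form $Q_c(\xi):=|\xi|^2-c\det\xi$ on $\Rsym$ is positive definite, so $F_\lambda-c\det$ equals $\lambda+Q_c(\xi)$ for $\xi\ne0$ and $0$ at $\xi=0$; performing the convexification in the Euclidean geometry induced by $\sqrt{Q_c}$ (exactly as in the scalar computation of the convex envelope of $s\mapsto(\lambda+s^2)\chi_{\{s\neq0\}}$) gives
\[
(F_\lambda-c\det)^{**}(\sigma)=\begin{cases}2\sqrt\lambda\,\sqrt{Q_c(\sigma)}&\text{if }Q_c(\sigma)\le\lambda\,,\\ \lambda+Q_c(\sigma)&\text{if }Q_c(\sigma)\ge\lambda\,.\end{cases}
\]
By the null-Lagrangian identity and Jensen's inequality, for every $\varphi\in W^{2,\infty}_0(Q)$ and every $|c|<2$ one has $\int_QF_\lambda(\sigma+\nabla^2\varphi)\,\d x=\int_Q(F_\lambda-c\det)(\sigma+\nabla^2\varphi)\,\d x+c\det\sigma\ \ge\ (F_\lambda-c\det)^{**}(\sigma)+c\det\sigma$, hence $Q_2F_\lambda(\sigma)\ge\sup_{|c|<2}\big[(F_\lambda-c\det)^{**}(\sigma)+c\det\sigma\big]=:S(\sigma)$. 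It then remains to compute $S(\sigma)$: with $g(c)=(F_\lambda-c\det)^{**}(\sigma)+c\det\sigma$, one has $g(c)=\lambda+|\sigma|^2$ whenever $Q_c(\sigma)\ge\lambda$, and $g(c)=2\sqrt\lambda\sqrt{Q_c(\sigma)}+c\det\sigma\le\lambda+Q_c(\sigma)+c\det\sigma=\lambda+|\sigma|^2$ by AM--GM otherwise; since $Q_c(\sigma)$ runs through an interval whose right endpoint is $\rho^0(\sigma)^2$ as $c$ varies in $(-2,2)$, it follows that $S(\sigma)=\lambda+|\sigma|^2$ when $\rho^0(\sigma)\ge\sqrt\lambda$, whereas when $\rho^0(\sigma)<\sqrt\lambda$ one always stays in the branch $Q_c(\sigma)<\lambda$, $g'(c)=\det\sigma\big(1-\sqrt\lambda/\sqrt{Q_c(\sigma)}\big)$ has the sign of $-\det\sigma$, and $g$ is maximised as $c\to-2\,\sgn(\det\sigma)$, giving $S(\sigma)=2\sqrt\lambda\,\rho^0(\sigma)-2|\det\sigma|$. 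In both cases $S(\sigma)=\bar G_\lambda(\sigma)$ (the degenerate endpoint $|c|=2$ enters only as a limit, which is legitimate by continuity of $g$ in $c$), which closes the proof.

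\textbf{Expected main obstacle.} The genuinely delicate point is the lower bound — more precisely, identifying the correct ``translation''. The naive supporting data (a single tangent paraboloid, useful only when $\rho^0>\sqrt\lambda$; or functions of the form $2\sqrt\lambda\,A:\sigma\pm2\det\sigma$ with $\|A\|\le1$, useful only when $\rho^0\le\sqrt\lambda$) fail the global inequality $\le F_\lambda$, so one must let $c$ range over the whole admissible interval $(-2,2)$, compute $(F_\lambda-c\det)^{**}$ for every such $c$, and carry out the one-parameter optimisation carefully, tracking the two regimes $\rho^0\lessgtr\sqrt\lambda$ and the degeneracy of $Q_c$ at $|c|=2$. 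The upper bound is, by comparison, a routine — if somewhat lengthy — second-order lamination together with standard null-Lagrangian bookkeeping.
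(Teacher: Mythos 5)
Your proof is correct, but it splits from the paper on the lower bound. For the upper bound you and the paper do essentially the same thing: a two-level laminate with symmetric rank-one jumps ($e_1\otimes e_1$, then $e_2\otimes e_2$) between the void and the phases $\diag(a/\theta,b)$, $\diag(0,b/\mu)$, optimised at $\mu=|b|/\sqrt\lambda$, $\theta=|a|/(\sqrt\lambda-|b|)$; the paper merely packages this through the symmetric rank-one convex envelope $\Rs F_\lambda$ (its Lemmas on $Q_2f\le\Rs f$, the iterated envelope $\Rs_k$, and the explicit laminate giving $\Rs F_\lambda\le \bar G_\lambda$), which lets it quote an abstract cutoff/scale-separation lemma instead of building the nested test function by hand — if you write your version out, the collar interpolation and the intermediate-scale cutoff need the quantitative separation of scales (lamination period much smaller than the square of the collar width) that the paper's auxiliary construction lemmas supply, plus a separate word for the degenerate cases $a=0$ or $|b|=\sqrt\lambda$. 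For the lower bound the paper does not argue directly at all: it cites the known first-gradient quasiconvexification of the Kohn--Strang integrand (Theorem 6.28 in Dacorogna), uses the scaling $F_\lambda=\lambda F_1(\cdot/\sqrt\lambda)$, and the elementary inequality $Q_1F_\lambda\le Q_2F_\lambda$ to get $\bar G_\lambda\le Q_2F_\lambda$. You instead re-derive the bound self-containedly by the translation method, using that $\det\nabla^2\varphi$ is a null Lagrangian for $\varphi\in W^{2,\infty}_0$, computing $(F_\lambda-c\det)^{**}$ for $|c|<2$ and optimising in $c$; your branch analysis (sign of $g'(c)$, the limit $c\to\pm2$, and the boundary case $\rho^0(\sigma)=\sqrt\lambda$ where the two formulas for $\bar G_\lambda$ coincide) checks out. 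What each buys: the paper's citation is shorter and offloads the convex-analysis work, but quietly relies on the $2\times2$ Kohn--Strang result restricted to symmetric matrices; your translation argument is longer but fully self-contained, makes the polyconvex mechanism behind the formula explicit, and only ever works on $\Rsym$, so no restriction issue arises.
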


For the proof, we will need to carry out proofs of statements whose analogues
for first gradients are well known. We closely follow the proofs in
\cite{MR2361288}, adapting them to the current situation.
\newcommand{\Rnsym}{\R^{n\times n}_{\mathrm{sym}}}
\begin{definition}
  Let $f:\Rnsym\to \R$. We say that $f$ is symmetric rank one convex if
\[
f(t\xi_1+(1-t)\xi_2)\leq t f(\xi_1)+(1-t) f(\xi_2) 
\]
for all $t\in[0,1]$, and for all $\xi_1,\xi_2\in\Rnsym$ such that $\xi_1-\xi_2=\alpha\eta\otimes \eta$ for
some $\alpha\in \R$, $\eta\in\R^n$.

Furthermore, for $f:\Rnsym\to \R$, we set
\[
\Rs f(\xi):=\sup\{g(\xi):g\leq f \text{ and } g \text{ is symmetric rank one
  convex}\}\,.
\]
\end{definition}


\begin{lemma}
\label{lem:auxconst1}
  Let  $\alpha,\beta\in\R$, $t\in [0,1]$, $\e>0$ and 
\[
u_t:[a,b]\to\R,\,\quad x\mapsto \frac12 (t\alpha+(1-t)\beta) x^2\,.
\]
Then there exist $I,J\subset[0,1]$ and $u:[0,1]\to\R$ such that $\overline
I\cup\overline J=[0,1]$, $I\cap J=\emptyset$, $|I|=t$, $|J|=(1-t)$ and \[
\begin{split}
  u(0)=u_t(0),\quad u'(0)=u'_t(0),\\
  u(1)=u_t(1),\quad u'(1)=u'_t(1),\\
  \|u-u_t\|_{L^\infty}+\|u'-u_t'\|_{L^\infty}<\e\\
  u''(x)= \alpha \quad\text{ for }x\in I\\
  u''(x)= \beta \quad\text{ for }x\in J\,.
\end{split}
\]
\end{lemma}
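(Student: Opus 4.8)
The plan is to carry out the classical one-dimensional lamination construction: I would obtain $u$ as an antiderivative of a continuous, piecewise-affine function $g$ that oscillates tightly around the straight line $x\mapsto cx$, where $c:=t\alpha+(1-t)\beta$ is the constant value of $u_t''$. (We may take $[a,b]=[0,1]$, which is the only case in which the endpoint conditions make sense.) Since $u_t(x)=\tfrac12 cx^2$, we have $u_t(0)=u_t'(0)=0$, $u_t'(x)=cx$, $u_t(1)=\tfrac12 c$ and $u_t'(1)=c$, so it suffices to construct $g\in W^{1,\infty}(0,1)$ with $g(0)=0$, $g(1)=c$, $\int_0^1 g\,\mathrm{d}x=\tfrac12 c$, with $g'\in\{\alpha,\beta\}$ a.e., $|\{g'=\alpha\}|=t$, and $\|g-u_t'\|_{L^\infty}<\varepsilon/2$. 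Indeed, $u(x):=\int_0^x g(s)\,\mathrm{d}s$ then satisfies $u(0)=u'(0)=0$, $u'(1)=g(1)=c$, $u(1)=\int_0^1 g=\tfrac12 c$, $u''=g'$, and $\|u-u_t\|_{L^\infty}\le\|g-u_t'\|_{L^\infty}$, so $u$ together with $I:=\{g'=\alpha\}$ and $J:=\{g'=\beta\}$ has all the stated properties. The degenerate cases $\alpha=\beta$ (take $u=u_t$, $I=[0,t)$, $J=[t,1]$) and $t\in\{0,1\}$ are immediate, so I assume $\alpha\neq\beta$ and $t\in(0,1)$.

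To build $g$, fix an even integer $N$ and split $[0,1]$ into the blocks $B_k=[k/N,(k+1)/N]$, $k=0,\dots,N-1$. On each $B_k$ I let $g$ be affine with slope $\alpha$ on a subinterval of length $t/N$ and slope $\beta$ on the complementary subinterval of length $(1-t)/N$, arranged so that $g$ equals $ck/N$ at the left endpoint and $c(k+1)/N$ at the right endpoint of $B_k$. This is consistent because $\alpha\cdot\tfrac tN+\beta\cdot\tfrac{1-t}{N}=\tfrac cN$, it makes $g$ continuous with $g(0)=0$ and $g(1)=c$, and it gives $|\{g'=\alpha\}|=N\cdot\tfrac tN=t$, $|\{g'=\beta\}|=1-t$. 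On each block the graph of $g$ follows a ``tent'' over the chord of $x\mapsto cx$ with apex of magnitude $t(1-t)|\alpha-\beta|/N\le|\alpha-\beta|/(4N)$, whence $\|g-u_t'\|_{L^\infty(0,1)}\le|\alpha-\beta|/(4N)$ and $\|u-u_t\|_{L^\infty}\le|\alpha-\beta|/(4N)$; both are below $\varepsilon/2$ once $N$ is chosen large.

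The one genuinely delicate point is the moment condition $\int_0^1 g=\tfrac12 c$, equivalently $u(1)=u_t(1)$: if all blocks have the same orientation, the tent-shaped deviations $g(x)-cx$ all have the same sign and $\int_0^1(g-cx)\,\mathrm{d}x$ is of order $1/N$, not zero. I would remove this by alternating the orientation of the blocks --- the slope-$\alpha$ piece first on even-indexed blocks, the slope-$\beta$ piece first on odd-indexed blocks --- so that the deviation on an odd block is an inverted tent whose signed area exactly cancels that of the (upright) tent on the preceding even block; summing over the $N/2$ consecutive pairs yields $\int_0^1(g-cx)\,\mathrm{d}x=0$ and hence $u(1)=u_t(1)$ exactly, while the block endpoint values $g(k/N)=ck/N$ --- and therefore $g(1)=c$ and the two $L^\infty$ bounds --- are unchanged. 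Finally I take $I=\{x\in[0,1]:g'(x)=\alpha\}$ and $J=\{x\in[0,1]:g'(x)=\beta\}$ as open sets, which are disjoint, satisfy $\overline I\cup\overline J=[0,1]$, and have $|I|=t$, $|J|=1-t$. I expect the main obstacle to be precisely this endpoint/moment matching; once it is handled by the alternating-blocks device, the remaining verifications reduce to elementary estimates for affine functions.
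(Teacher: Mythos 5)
Your construction is correct and is essentially the paper's own proof: both superimpose on $u_t$ a fine-scale one-dimensional laminate whose second derivative takes the value $\alpha$ on a union of intervals of total measure $t$ and $\beta$ on the complement, with the only delicate point being the exact matching $u(1)=u_t(1)$. The paper secures this by choosing the phase $q$ of its periodic step profile $\varphi_{t,q}$ so that the second antiderivative vanishes at $1$, while you obtain the same exact cancellation by alternating the orientation of consecutive blocks; the two devices are interchangeable and the rest of the verification is identical.
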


\begin{proof}
For $q\in[0,1-t]$, let $\varphi_{t,q}:[0,1]\to \R$ be defined by
\[
\varphi_{t,q}(x)=\begin{cases} (1-t)(\alpha-\beta)&\text{ if } q\leq x< q+t\\
-t(\alpha-\beta)&\text{ else. }\end{cases}
\]
Note that $\int_0^1\d s\varphi_{t,q}(s)=0$ independently of $q$. In fact, we may
choose $q$ such that we also have
\[
\int_0^1\d s\int_0^s \d \tilde s \varphi_{t,q}(\tilde s)=0\,.
\]
This choice of $q$ shall be fixed from now on. We extend $\varphi_{t,q}$ periodically on $\R$. For
$k\in\N$,  we set $\Phi_k(x):=\varphi_{t,q}(kx)$. Choosing $k\in\N$ large
enough, we set
\[
u(x):=u_t(x)+\int_0^x\d s\int_0^s\d\tilde s\, \Phi_k\left(\tilde s\right)\,.
\]
It is obvious that this function has all the desired properties (for
large enough $k$).
\end{proof}

\begin{lemma}
\label{lem:auxconst2}
  Let $\e>0$, $t\in[0,1]$ and let $\xi_1,\xi_2\in\Rnsym$ and $\alpha\in\R$, $\eta\in\R^n$ such
  that $\xi_1-\xi_2=\alpha \eta\otimes \eta$. Let $l:[0,1]^n\to\R$ be affine,
  and $u_t(x)=l(x)+\frac12
  x^T(t\xi_1+(1-t)\xi_2)x$. Then there exists
 a function $u:[0,1]^n\to \R$ and open sets $\Omega_1,\Omega_2\subset
 [0,1]^n$ such that
\[
\begin{split}
  \|u-u_t\|_{L^\infty}+\|u-u_t\|_{L^\infty}&<\e\\
  \nabla^2 u&= \xi_1\text{ on }\Omega_1\\
  \nabla^2 u&= \xi_2\text{ on }\Omega_2\\
\|\nabla^2 u\|_{L^\infty}&\leq C\\
  |\L^n(\Omega_1)-t|&<\e\\
  |\L^n(\Omega_2)-(1-t)|&<\e\,.
\end{split}
\]
\end{lemma}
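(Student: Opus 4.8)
The plan is to exploit the rank-one structure $\xi_1-\xi_2=\alpha\,\eta\otimes\eta$ in order to reduce the construction to the one-dimensional lamination of Lemma~\ref{lem:auxconst1}. If $\eta=0$ the statement is trivial, so assume $\eta\neq0$ and normalise $|\eta|=1$ (replacing $\alpha$ by $\alpha|\eta|^2$ and $\eta$ by $\eta/|\eta|$). Then $x^T(\eta\otimes\eta)x=(x\cdot\eta)^2$, hence
\[
u_t(x)=l(x)+\tfrac12 x^T(t\xi_1+(1-t)\xi_2)x=l(x)+\tfrac12 x^T\xi_2 x+g_t(x\cdot\eta),\qquad g_t(s):=\tfrac{t\alpha}{2}s^2 ,
\]
and $g_t(s)=\tfrac12\big(t\alpha+(1-t)\cdot 0\big)s^2$ is exactly of the form occurring in Lemma~\ref{lem:auxconst1} with parameters $\alpha,0,t$. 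I would therefore look for $u$ of the form $u(x)=l(x)+\tfrac12 x^T\xi_2 x+\phi(x\cdot\eta)$ with $\phi\in C^{1,1}$, so that $\nabla^2 u=\xi_2+\phi''(x\cdot\eta)\,\eta\otimes\eta$ equals $\xi_1$ wherever $\phi''=\alpha$ and $\xi_2$ wherever $\phi''=0$.

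To build $\phi$, set $[a,b]:=\{x\cdot\eta:x\in[0,1]^n\}$, subdivide it into $N$ equal subintervals $[a_j,a_{j+1}]$, and on each apply Lemma~\ref{lem:auxconst1} with parameters $\alpha,0,t$ (equivalently, rescale the lemma from $[0,1]$; its construction runs on any bounded interval). This yields $\phi_j\in C^{1,1}([a_j,a_{j+1}])$ with $\phi_j''=\alpha$ on a set $I_j\subset[a_j,a_{j+1}]$ with $|I_j|=t(a_{j+1}-a_j)$, $\phi_j''=0$ on $J_j$ with $|J_j|=(1-t)(a_{j+1}-a_j)$, $I_j\cap J_j=\emptyset$, $\phi_j$ agreeing with $g_t$ and $g_t'$ at $a_j,a_{j+1}$, and $\|\phi_j-g_t\|_{L^\infty}+\|\phi_j'-g_t'\|_{L^\infty}<\e$. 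The endpoint matching makes $\phi:=\phi_j$ on $[a_j,a_{j+1}]$ a well-defined $C^{1,1}$ function on $[a,b]$ with the same $C^1$-bound. I then put $I:=\bigcup_jI_j$, $J:=\bigcup_jJ_j$, $u(x):=l(x)+\tfrac12 x^T\xi_2 x+\phi(x\cdot\eta)\in C^{1,1}([0,1]^n)$, and
\[
\Omega_1:=\{x\in(0,1)^n:x\cdot\eta\in\operatorname{int}I\},\qquad \Omega_2:=\{x\in(0,1)^n:x\cdot\eta\in\operatorname{int}J\},
\]
which are open and disjoint. All pointwise requirements are then immediate: $\nabla^2 u\in\{\xi_1,\xi_2\}$ a.e., so $\|\nabla^2 u\|_{L^\infty}\leq\max(|\xi_1|,|\xi_2|)=:C$, $\nabla^2 u=\xi_1$ on $\Omega_1$ and $\nabla^2 u=\xi_2$ on $\Omega_2$; and $u-u_t=(\phi-g_t)(x\cdot\eta)$, $\nabla(u-u_t)=(\phi'-g_t')(x\cdot\eta)\,\eta$ together with $|\eta|=1$ give $\|u-u_t\|_{L^\infty}+\|\nabla u-\nabla u_t\|_{L^\infty}<\e$.

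The remaining point, which I expect to be the only real obstacle, is the two volume conditions $|\L^n(\Omega_1)-t|<\e$ and $|\L^n(\Omega_2)-(1-t)|<\e$: Lemma~\ref{lem:auxconst1} controls only the one-dimensional measures $|I|=t(b-a)$, $|J|=(1-t)(b-a)$, whereas by Fubini
\[
\L^n(\Omega_1)=\int_a^b\mathbf 1_I(s)\,\rho(s)\,\d s,\qquad \rho(s):=\H^{n-1}\big(\{x\in[0,1]^n:x\cdot\eta=s\}\big),
\]
and $\rho$ is a continuous, generally non-constant, cube-slice density with $\int_a^b\rho=1$. This is why the laminate must be refined: taking $N$ large makes $\mathbf 1_I\wsto t$ weakly-$*$ in $L^\infty([a,b])$, and splitting the integral over the $[a_j,a_{j+1}]$ and estimating $\bigl|\int_{a_j}^{a_{j+1}}\mathbf 1_{I_j}\rho\,\d s-\rho(a_j)|I_j|\bigr|\leq\omega_\rho\big(\tfrac{b-a}{N}\big)|I_j|$ (with $\omega_\rho$ the modulus of continuity of $\rho$ and $\sum_j|I_j|=t(b-a)$) gives $\L^n(\Omega_1)=t\sum_j\rho(a_j)(a_{j+1}-a_j)+o(1)=t\int_a^b\rho+o(1)=t+o(1)$ as $N\to\infty$, and likewise $\L^n(\Omega_2)\to 1-t$. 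Choosing $N$ large enough then finishes the proof; this equidistribution bookkeeping is routine once the rank-one reduction above is in place, so the substance of the argument is contained in the first two paragraphs.
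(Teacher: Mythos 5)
Your construction is correct, but it is genuinely different from the paper's. You laminate globally in the scalar variable $x\cdot\eta$: writing $u=l+\tfrac12 x^T\xi_2x+\phi(x\cdot\eta)$ and assembling $\phi$ from Lemma \ref{lem:auxconst1} on a fine subdivision of $[a,b]$ (applying it to $g_t$ plus an affine shift on each subinterval is harmless, since adding the same affine function to $u_t$ and $u$ changes nothing), you obtain $\nabla^2u\in\{\xi_1,\xi_2\}$ almost everywhere exactly, the bound $\|\nabla^2u\|_{L^\infty}\leq\max(|\xi_1|,|\xi_2|)$ independently of $\e$, and the volume fractions via Fubini together with the equidistribution of the finely laminated phase indicator against the slice density $\rho$ (which is bounded and continuous on $[a,b]$, so your modulus-of-continuity bookkeeping, or simply the weak-* convergence $\mathbf 1_I\wsto t$, suffices). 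The paper instead reduces to $\eta=e_1$ by filling $[0,1]^n$ with small cubes having one axis along $\eta$ (setting $u=u_t$ on the remainder), runs the one-dimensional construction in $x_1$ inside each such cube, and glues to $u_t$ through a smooth cutoff in a boundary layer, with the closeness parameter chosen of order $\e^3$ so that the cutoff contributions to the Hessian stay bounded. Your route is more economical: no covering argument, no cutoff estimates, and a constant $C$ not depending on $\e$.

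The one thing the paper's construction buys that yours does not is the boundary matching $u=u_t$, $\nabla u=\nabla u_t$ on $\partial[0,1]^n$, i.e.\ $u-u_t\in W^{2,\infty}_0([0,1]^n)$. This is not part of the statement of the lemma, so your proof does prove the lemma as written; but it is exactly the property invoked afterwards in the proof of Lemma \ref{lem:RsymQ2}, where $u-u_t$ serves as an admissible test function in the definition of $2$-quasiconvexity. Your $u$ differs from $u_t$ by $(\phi-g_t)(x\cdot\eta)$, which is small in $C^1$ but does not vanish on the faces of the cube. To make your construction usable there, add one further step: interpolate between your $u$ and $u_t$ with a smooth cutoff supported in an $\e$-layer around $\partial[0,1]^n$, which costs only a bounded Hessian error on a set of measure $O(\e)$ provided $\|u-u_t\|_{C^1}$ is taken small compared to the squared layer width --- this is precisely the second step of the paper's proof.
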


\begin{proof}
  We may fill the cube $[0,1]^n$ by smaller cubes with one of the axes parallel
  to $\eta$, and set $u=u_t$ on the (small) remainder.  In this way, we reduce the problem
  to the case where $\eta=e_1$. Now let $\Omega_\e:=[\e,1-\e]^n$, and $\eta\in C_0^\infty((0,1)^n)$ such that
\[
\begin{split}
  \eta&=1\text{ on }\Omega_\e \\
\|\nabla\eta\|_{L^\infty}&\leq \frac{L}{\e}\\
  \|\nabla^2\eta\|_{L^\infty}&\leq \frac{L}{\e^2}\,,
\end{split}
\]
where $L$ is some numerical constant that does not depend on $\e$. For
$x\in[0,1]$, we write $\tilde u_t(s)=\frac12
  (t(\xi_1)_{11}+(1-t)(\xi_2)_{11})s^2$. Let $\delta>0$ to be chosen later. According to
Lemma \ref{lem:auxconst1}, we may choose 
$\tilde u:[0,1]\to\R$ and $I,J\subset[0,1]$ such that $\overline
I\cup\overline J=[0,1]$, $I\cap J=\emptyset$, $|I|=t$, $|J|=(1-t)$, and 
\[
\begin{split}
  \tilde u(0)=\tilde u_t(0),\quad \tilde u'(0)=\tilde u'_t(0),\\
  \tilde u(1)=\tilde u_t(1),\quad \tilde u'(1)=\tilde u'_t(1),\\
  \|\tilde u-\tilde u_t\|_{L^\infty}+\|\tilde u'-\tilde u_t'\|_{L^\infty}<\delta\\
  \tilde u''(x)= \xi_1 \quad\text{ for }x\in I\\
  \tilde u''(x)= \xi_2 \quad\text{ for }x\in J\,.
\end{split}
\]
We set $\psi(x_1,\dots,x_n)=\tilde u(x_1)$ and 
\[
u:=\eta (\psi+l)+(1-\eta) u_t\,.
\]
Choosing $\delta$ small enough (e.g. $\delta<\min(\e^3,\e^3/L)$), this choice of
$u$ satisfies all the requirements. We leave it to the reader to carry out the straightforward computations
that lead to 
this statement.
\end{proof}

\begin{lemma}
  \label{lem:RsymQ2}
Assume that $f:\Rnsym\to\R$ is bounded from above by a continuous function
$\tilde f\in C^0(\Rnsym)$. Then we have 
\[
Q_2 f\leq \Rs f\,.
\]
\end{lemma}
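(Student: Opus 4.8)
The plan is to show that $Q_2 f$ is symmetric rank one convex. Once this is established the lemma follows at once: taking the zero test function in the definition of $Q_2 f$ gives $Q_2 f\le f$, so $Q_2 f$ is one of the competitors in the supremum defining $\Rs f$, whence $Q_2 f\le \Rs f$ pointwise. By translation invariance of the integral and of $W^{2,\infty}_0$, I compute $Q_2 f$ using the unit cube $Q:=[0,1]^n$ in place of $[-1/2,1/2]^n$. To prove symmetric rank one convexity, fix $\xi_1,\xi_2\in\Rnsym$ with $\xi_1-\xi_2=\alpha\,\eta\otimes\eta$ ($\alpha\in\R$, $\eta\in\R^n$), fix $t\in[0,1]$, and set $\xi_t:=t\xi_1+(1-t)\xi_2$; the case $\xi_1=\xi_2$ is trivial, so assume $\xi_1\neq\xi_2$. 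It suffices to show
\[
Q_2 f(\xi_t)\le t\,Q_2 f(\xi_1)+(1-t)\,Q_2 f(\xi_2)\,,
\]
and I may assume the right-hand side is finite (if some $Q_2 f(\xi_i)=-\infty$, the same construction drives the competing energy to $-\infty$ and shows $Q_2 f(\xi_t)=-\infty$).

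Given $\e>0$, the competitor is built in two layers. First, apply Lemma \ref{lem:auxconst2} with $l\equiv 0$ and $u_t(x)=\tfrac12 x^T\xi_t x$: this yields $u\in W^{2,\infty}(Q)$ with $u-u_t\in W^{2,\infty}_0(Q)$ (the construction there keeps $u=u_t$ near $\partial Q$), open sets $\Omega_1,\Omega_2\subset Q$ with $\nabla^2 u=\xi_i$ on $\Omega_i$ (hence $\Omega_1\cap\Omega_2=\emptyset$ since $\xi_1\neq\xi_2$), a bound $\|\nabla^2 u\|_{L^\infty}\le C_0$ with $C_0$ depending only on $\xi_1,\xi_2$ (in particular independent of $\e$, as is visible from the explicit construction in the proof of that lemma), and $|\L^n(\Omega_1)-t|<\e$, $|\L^n(\Omega_2)-(1-t)|<\e$. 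Second, for $i=1,2$ choose $\varphi_i\in W^{2,\infty}_0(Q)$ with $\int_Q f(\xi_i+\nabla^2\varphi_i)\,\d x< Q_2 f(\xi_i)+\e$, cover $\Omega_i$ up to measure $<\e$ by finitely many pairwise disjoint closed cubes $Q_{i,j}=c_{i,j}+d_{i,j}\,[0,1]^n\subset\Omega_i$ (automatically disjoint across $i$), and on $Q_{i,j}$ set $\psi_{i,j}(x):=d_{i,j}^{2}\,\varphi_i\big((x-c_{i,j})/d_{i,j}\big)$, extended by $0$ elsewhere. Since $\varphi_i$ and $\nabla\varphi_i$ vanish on $\partial Q$, one has $\psi_{i,j}\in W^{2,\infty}_0(Q_{i,j})$, so the competitor
\[
v:=u+\sum_{i,j}\psi_{i,j}
\]
satisfies $v\in W^{2,\infty}(Q)$ and $v-u_t\in W^{2,\infty}_0(Q)$; moreover $\nabla^2 v(x)=\xi_i+\nabla^2\varphi_i\big((x-c_{i,j})/d_{i,j}\big)$ on each $Q_{i,j}$ (using $\nabla^2 u=\xi_i$ there) and $\nabla^2 v=\nabla^2 u$ on the leftover set $R:=Q\setminus\bigcup_{i,j}Q_{i,j}$.

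It remains to estimate $\int_Q f(\nabla^2 v)\,\d x$. On $Q_{i,j}$ the change of variables $y=(x-c_{i,j})/d_{i,j}$ and scaling give $\int_{Q_{i,j}} f(\nabla^2 v)\,\d x=\L^n(Q_{i,j})\int_Q f(\xi_i+\nabla^2\varphi_i)\,\d y<\L^n(Q_{i,j})\,(Q_2 f(\xi_i)+\e)$; summing over $j$ and using $|\L^n(\Omega_1)-t|<\e$, $|\L^n(\Omega_2)-(1-t)|<\e$, this part contributes at most $t\,Q_2 f(\xi_1)+(1-t)\,Q_2 f(\xi_2)+C'\e$ for a constant $C'$ depending only on $|Q_2 f(\xi_1)|,|Q_2 f(\xi_2)|$. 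On $R$ we have $|\nabla^2 v|=|\nabla^2 u|\le C_0$, hence $f(\nabla^2 v)\le\tilde f(\nabla^2 v)\le \sup_{|A|\le C_0}\tilde f=:C_1<\infty$ (this is the only place the hypothesis $f\le\tilde f$ with $\tilde f$ continuous enters, and $C_1$ is finite and independent of $\e$), while $\L^n(R)\le\L^n(Q\setminus(\Omega_1\cup\Omega_2))+\L^n(\Omega_1\setminus\textstyle\bigcup_j Q_{1,j})+\L^n(\Omega_2\setminus\bigcup_j Q_{2,j})<4\e$. Combining and using $v-u_t\in W^{2,\infty}_0(Q)$,
\[
Q_2 f(\xi_t)\le\int_Q f(\nabla^2 v)\,\d x\le t\,Q_2 f(\xi_1)+(1-t)\,Q_2 f(\xi_2)+C\e\,,
\]
where $C<\infty$ depends only on $\xi_1,\xi_2,\tilde f$ and $|Q_2 f(\xi_1)|,|Q_2 f(\xi_2)|$ but not on $\e$; letting $\e\downarrow 0$ yields the symmetric rank one convexity inequality and completes the proof. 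I expect the main obstacle to be exactly this last step — controlling the energy on the leftover set $R$ — which is why the uniform-in-$\e$ a priori bound $\|\nabla^2 u\|_{L^\infty}\le C_0$ from Lemma \ref{lem:auxconst2} together with the growth hypothesis $f\le\tilde f$ is indispensable; the remaining bookkeeping, that the tiled oscillations $\psi_{i,j}$ assemble into a genuine $W^{2,\infty}_0$ competitor and that the constant-Hessian regions match up, is routine.
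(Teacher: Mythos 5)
Your proposal is correct, but it handles the quasiconvexity side differently from the paper, and the difference is worth recording. The paper never proves symmetric rank one convexity of $Q_2 f$ by a direct construction: it invokes the envelope characterization of $Q_2 f$ as the largest 2-quasiconvex function below $f$ (a standard but nontrivial fact, asserted without proof), and then shows that \emph{every} 2-quasiconvex $g\le f$ is symmetric rank one convex, by applying the quasiconvexity inequality of $g$ at $t\xi_1+(1-t)\xi_2$ with the single competitor $u-u_t$ from Lemma \ref{lem:auxconst2} and bounding $g\le f\le\tilde f$ on the transition layer. You instead work exclusively with the definition of $Q_2 f$ as an infimum: on top of the Lemma \ref{lem:auxconst2} profile you paste rescaled near-optimal test functions for $Q_2 f(\xi_1)$ and $Q_2 f(\xi_2)$ into cubes tiling $\Omega_1$ and $\Omega_2$, producing one admissible competitor for $Q_2 f(\xi_t)$, and you bound $f\le\tilde f$ on the leftover set. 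What your route buys is self-containedness: you never need the fact that the infimum formula itself defines a 2-quasiconvex function; the price is the extra covering/rescaling layer, the observation that the Hessian bound in Lemma \ref{lem:auxconst2} is uniform in $\e$ (true from its proof, and tacitly needed in the paper's argument as well), and the bookkeeping for possible $-\infty$ values, all of which you carry out correctly. One shared caveat: your last step treats $Q_2 f$ as a competitor in the supremum defining $\Rs f$, which literally requires $Q_2 f$ to be real-valued; if $Q_2 f$ were $-\infty$ somewhere this needs a word (the inequality is trivial at such points, and in the paper's application $f=F_\lambda\ge 0$ so the issue is vacuous) --- but the paper's appeal to ``the largest 2-quasiconvex minorant'' rests on exactly the same implicit assumption, so this is not a defect specific to your argument.
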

\begin{proof}
 Since $Q_2f$ is the largest 2-quasiconvex function that is less or equal to
 $f$, and $\Rs f$ is the largest symmetric rank one convex function that is less or equal to
 $f$, it suffices to show that if $g:\R^{n\times n}\to \R$ is 2-quasiconvex, and
 $g\leq f$,
 then it is symmetric-rank-one convex. So let us suppose $g:\R^{n\times n}\to\R$
 is 2-quasiconvex, and let $\xi_1,\xi_2\in\R^{n\times n}$ and $\alpha\in\R$,
 $\eta\in\R^n$ such that $\xi_1-\xi_2=\alpha\eta\otimes\eta$. We need to show
 \begin{equation}
   \label{eq:1}
   g(t\xi_1+(1-t)\xi_2)\leq t g(\xi_1)+(1-t) g(\xi_2) 
 \end{equation}
for all $t\in[0,1]$. Let $u_t(x):= \frac12 x^T(t\xi_1+(1-t)\xi_2)x$, and $\e>0$
to be chosen later. Let $u$
be the approximating function of Lemma \ref{lem:auxconst2}, with the sets
$\Omega_1,\Omega_2\subset[0,1]^n$ as in the statement of that lemma. Then we
have $u-u_t\in W^{2,\infty}_0([0,1]^n)$ and
\[
\nabla^2 u= t\xi_1+(1-t)\xi_2+\nabla^2(u-u_t)\,.
\]
Hence
\[
\begin{split}
  g(t\xi_1+(1-t)\xi_2)&=\int_{[0,1]^n}g(\nabla^2 u_t)\d x\\
&\leq \int_{[0,1]^n}g(\nabla^2 u)\d x\\
  &=\L^n(\Omega_1)g(\xi_1)+\L^n(\Omega_2)g(\xi_2)+
  \int_{[0,1]^n\setminus(\Omega_1\cup\Omega_2)} g(\nabla^2 u)\d x\\
&\leq \L^n(\Omega_1)g(\xi_1)+\L^n(\Omega_2)g(\xi_2)+
  \int_{[0,1]^n\setminus(\Omega_1\cup\Omega_2)} \tilde f(\nabla^2 u)\d x
\end{split}
\]
Choosing $\e$ small enough and using the properties of $u,\Omega_1,\Omega_2$
from the statement of Lemma \ref{lem:auxconst2}, we see that the right hand side is smaller than
$tg(\xi_1)+(1-t)g(\xi_2)+\delta$ for any given $\delta>0$; here we also used the
assumption that $\tilde f$ is continuous. This proves
\eqref{eq:1} and hence the lemma.
\end{proof}

\begin{definition} Let $f:\Rnsym\to\R$.
  We set $\Rs_0 f=f$ and
  \[
  \begin{split}
    \Rs_{k+1}f(\xi)&:=\inf\{ t
    \Rs_k(\xi_1)+(1- t )\Rs_k(\xi_2):\\
    &\qquad t \xi_1+(1- t )\xi_2=\xi,\xi_1-\xi_2=\alpha \eta\otimes \eta \text{
      for some }\alpha\in\R,\,\eta\in \R^n\}\,.
  \end{split}
  \]
\end{definition}
In complete analogy to Theorem 6.10, part 2 in \cite{MR2361288}, we  show

\begin{lemma}
\label{lem:Rsym}

Let $f:\Rnsym\to \R$, and let $g:\Rnsym\to \R$ be symmetric rank one
convex with $g\leq f$. Then we have
\[
\Rs f=\inf_{k\in\N}\Rs_k f\,.
\]
\end{lemma}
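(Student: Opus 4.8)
The plan is to follow the proof of the analogous statement for the rank-one convex envelope, Theorem 6.10 (2) in \cite{MR2361288}, adapting every step to symmetric rank one convexity. Throughout, write $h:=\inf_{k\in\N}\Rs_k f$.

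First I would observe that the sequence $(\Rs_k f)_{k\in\N}$ is pointwise non-increasing: in the infimum defining $\Rs_{k+1}f(\xi)$ one may take the trivial splitting $\xi_1=\xi_2=\xi$ (with $\alpha=0$, $t=1$), which yields the value $\Rs_k f(\xi)$, so $\Rs_{k+1}f\le\Rs_k f$ and hence $h(\xi)=\lim_{k\to\infty}\Rs_k f(\xi)$ for every $\xi\in\Rnsym$.

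Next I would prove by induction on $k$ that every symmetric rank one convex $g$ with $g\le f$ satisfies $g\le\Rs_k f$. The base case $k=0$ is the hypothesis. For the inductive step, suppose $g\le\Rs_k f$ and let $\xi=t\xi_1+(1-t)\xi_2$ with $\xi_1-\xi_2=\alpha\,\eta\otimes\eta$; then
\[
t\,\Rs_k f(\xi_1)+(1-t)\,\Rs_k f(\xi_2)\ \ge\ t\,g(\xi_1)+(1-t)\,g(\xi_2)\ \ge\ g(\xi),
\]
and taking the infimum over all such splittings gives $\Rs_{k+1}f(\xi)\ge g(\xi)$. In particular $h\ge g$; since by assumption such a $g$ exists, $h:\Rnsym\to\R$ is real-valued and the infima defining the $\Rs_k f$ are never $-\infty$. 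Taking the supremum over all symmetric rank one convex minorants $g$ of $f$ then yields $h\ge\Rs f$.

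For the reverse inequality it suffices to show that $h$ is itself symmetric rank one convex, because $h\le\Rs_0 f=f$ and then $h$ is an admissible competitor in the supremum defining $\Rs f$, so $h\le\Rs f$. Given a splitting $\xi=t\xi_1+(1-t)\xi_2$ with $\xi_1-\xi_2=\alpha\,\eta\otimes\eta$ and $t\in[0,1]$, I would start from $\Rs_{k+1}f(\xi)\le t\,\Rs_k f(\xi_1)+(1-t)\,\Rs_k f(\xi_2)$ and let $k\to\infty$: by the monotonicity from the first step each side converges, the left to $h(\xi)$ and the right to $t\,h(\xi_1)+(1-t)\,h(\xi_2)$, which is the required convexity inequality. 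Combining the two inequalities gives $\Rs f=h=\inf_{k\in\N}\Rs_k f$. The only point requiring genuine attention is the finiteness of $h$ — this is precisely what the hypothesis on $g$ secures and what legitimizes the limit passage in the last step; everything else is bookkeeping.
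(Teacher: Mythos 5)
Your proposal is correct and follows essentially the same route as the paper: establish monotonicity $\Rs_{k+1}f\le\Rs_k f$, show every symmetric rank one convex minorant $g\le f$ stays below all $\Rs_k f$ (giving $\Rs f\le h\le f$ and finiteness), and then prove that $h=\inf_k\Rs_k f$ is itself symmetric rank one convex so that it is an admissible competitor in the supremum defining $\Rs f$. The only cosmetic difference is that you pass to the limit in $\Rs_{k+1}f(\xi)\le t\,\Rs_k f(\xi_1)+(1-t)\,\Rs_k f(\xi_2)$ using monotone convergence, while the paper runs an equivalent $\varepsilon$-argument choosing large indices $i\le j$; both are valid.
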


\begin{proof}
First we observe that for any $k\in\N$, we have
\[
g\leq \Rs_{k+1} f\leq \Rs_k f\,,
\]
and hence obtain that
\[
R'f:=\inf_{k\in \N}\Rs_k f=\lim_{k\to\infty}\Rs_k f
\]
is well defined. For any symmetric rank one convex function $g$ we have $R'g=g$,
and hence $R'(\Rs f)=\Rs f$. Furthermore, if $g\leq g'$, then $R'g\leq
R'g'$. Combining these observations with the fact $\Rs f\leq f$, we obtain
\[
\Rs f\leq R'f\leq f\,.
\] 
It remains to show that $R'f$ is symmetric rank one convex. Assume that
$\xi_1,\xi_2\in \Rnsym$, and $\alpha\in\R$, $\eta\in\R^{n}$ such that
$\xi_1-\xi_2=\alpha \eta\otimes\eta$. Let $\e>0$. By definition of $R'f$, there exist $i,j\in\N$
such that
\[
\Rs_if(\xi_1)\leq R'f(\xi_1)+\e,\quad \Rs_jf(\xi_2)\leq R'f(\xi_2)+\e\,.
\]
Without loss of generality, we may assume $i\leq j$, which yields
$\Rs_jf(\xi_1)\leq \Rs_if(\xi_1)$. Thus we obtain for every $t\in[0,1]$,
\[
\begin{split}
  R'f(t\xi_1+(1-t)\xi_2)&\leq \Rs_{j+1}(t\xi_1+(1-t)\xi_2)\\
  &\leq t\Rs_{j}(\xi_1)+(1-t)\Rs_j(\xi_2)\\
  &\leq tR'f(\xi_1)+(1-t)R'f(\xi_2)+\e\,.
\end{split}
\]
Since $\e>0$ was arbitrary, we obtain that $R'f$ is symmetric rank one convex,
which proves the lemma. 
\end{proof}

\begin{lemma}
\label{lem:RsGl}
  We have
\[
\Rs F_\lambda\leq \bar G_\lambda\,.
\]
\end{lemma}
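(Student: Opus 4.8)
The plan is to use two elementary properties of the envelope $\Rs F_\lambda$. First, being the pointwise supremum of the (nonempty, since $0\le F_\lambda$) family of symmetric rank one convex functions lying below $F_\lambda$, it is itself symmetric rank one convex and satisfies $\Rs F_\lambda\le F_\lambda$; in particular $\Rs F_\lambda(0)=0$. Second, since $F_\lambda(O\xi O^T)=F_\lambda(\xi)$ for every $O\in O(2)$ and conjugation by $O$ carries symmetric rank one directions $\eta\otimes\eta$ to $(O\eta)\otimes(O\eta)$, we have $\Rs F_\lambda(O\sigma O^T)=\Rs F_\lambda(\sigma)$; the same invariance holds for $\bar G_\lambda$, which depends on $\sigma$ only through $\rho^0(\sigma)$, $|\det\sigma|$ and $|\sigma|$. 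Using the spectral theorem and conjugating by suitable $\diag(\pm1,\pm1)$ and by the coordinate permutation, it therefore suffices to establish $\Rs F_\lambda(\sigma)\le\bar G_\lambda(\sigma)$ when $\sigma=\diag(a,b)$ with $a\ge b\ge 0$.

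If $a+b>\sqrt\lambda$ this is immediate, since then $\bar G_\lambda(\sigma)=\lambda+|\sigma|^2=F_\lambda(\sigma)\ge\Rs F_\lambda(\sigma)$. Assume henceforth $a+b\le\sqrt\lambda$; then $b\le\sqrt\lambda/2<\sqrt\lambda$, so $\mu:=a/(\sqrt\lambda-b)\in[0,1]$ is well defined, and $\bar G_\lambda(\sigma)=2\sqrt\lambda(a+b)-2ab$. I build a two level laminate. At the first level I write $\sigma=\mu M_1+(1-\mu)M_2$ with the ``on ridge'' stress $M_1:=\diag(\sqrt\lambda-b,b)$ (so $\rho^0(M_1)=\sqrt\lambda$) and $M_2:=\diag(0,b)$; since $M_1-M_2=(\sqrt\lambda-b)\,\ee_1\otimes\ee_1$ is symmetric rank one, symmetric rank one convexity of $\Rs F_\lambda$ yields $\Rs F_\lambda(\sigma)\le\mu\,\Rs F_\lambda(M_1)+(1-\mu)\,\Rs F_\lambda(M_2)\le\mu F_\lambda(M_1)+(1-\mu)\,\Rs F_\lambda(M_2)$. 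At the second level I write $M_2=\theta M_3+(1-\theta)\cdot 0$ with $\theta:=b/\sqrt\lambda\in[0,1]$ and $M_3:=\sqrt\lambda\,\ee_2\otimes\ee_2$; as $M_3-0$ is symmetric rank one, $\Rs F_\lambda(M_2)\le\theta F_\lambda(M_3)+(1-\theta)F_\lambda(0)=2\lambda\theta=2\sqrt\lambda\,b$.

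Combining the two levels, $\Rs F_\lambda(\sigma)\le 2\sqrt\lambda\,b+\mu\bigl(F_\lambda(M_1)-2\sqrt\lambda\,b\bigr)$, and a direct computation gives $F_\lambda(M_1)-2\sqrt\lambda\,b=\lambda+(\sqrt\lambda-b)^2+b^2-2\sqrt\lambda\,b=2(\sqrt\lambda-b)^2$, hence $\Rs F_\lambda(\sigma)\le 2\sqrt\lambda\,b+2a(\sqrt\lambda-b)=2\sqrt\lambda(a+b)-2ab=\bar G_\lambda(\sigma)$, which is the claim. The only real content is identifying the correct microstructure — a lamination of the optimal ridge material $M_1$ against a rod–void mixture $M_2$ — after which the rank one compatibilities and the arithmetic are routine; alternatively, the same computation can be organized through the iterated laminates $\Rs_k F_\lambda$ of Lemma \ref{lem:Rsym}, where it shows $\Rs_2 F_\lambda\le\bar G_\lambda$.
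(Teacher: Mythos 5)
Your construction is essentially the paper's: a two-level laminate of void against "on-ridge" uniaxial/biaxial states (the paper uses $\xi_1=0$, $\xi_2=\diag(\sgn x\,\sqrt\lambda,0)$, $\xi_3=\diag(x,y/\beta)$ with optimized fractions $\alpha=|x|/\sqrt\lambda$, $\beta=|y|/(\sqrt\lambda-|x|)$; yours is the same microstructure with the nesting order and axes swapped), and your arithmetic in the case $a\ge b\ge 0$ is correct and gives exactly $\bar G_\lambda$. Replacing the appeal to Lemma \ref{lem:Rsym} by the observation that $\Rs F_\lambda$, being a supremum of symmetric rank one convex functions below $F_\lambda$, is itself symmetric rank one convex is also fine.

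There is, however, a genuine gap in your reduction step. Conjugation by orthogonal matrices (including $\diag(\pm1,\pm1)$ and the coordinate swap) preserves the eigenvalues of $\sigma$ \emph{with their signs}, so the spectral theorem only reduces the claim to $\sigma=\diag(a,b)$ with $a\ge b$ of arbitrary sign, not to $a\ge b\ge 0$. The even symmetry $\sigma\mapsto-\sigma$ (which you did not invoke, but which is legitimate since it preserves both $F_\lambda$ and the symmetric rank one cone) handles the negative semidefinite case, but the indefinite case $a>0>b$ is not covered by any of these symmetries, and your construction breaks there as written: $\theta=b/\sqrt\lambda<0$ is not an admissible weight, and $M_1=\diag(\sqrt\lambda-b,b)$ has $\rho^0(M_1)=\sqrt\lambda+2|b|>\sqrt\lambda$, so it is off the ridge and the resulting bound is strictly worse than $\bar G_\lambda$ (e.g.\ for $\lambda=1$, $a=-b=1/2$ one gets $11/6>3/2=\bar G_\lambda$). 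You also cannot simply assert $\Rs F_\lambda(\diag(a,-b))=\Rs F_\lambda(\diag(a,b))$: no linear map preserving the cone $\{\det=0\}$ in $\Rsym$ flips the sign of a single eigenvalue, so this is not a symmetry you may use a priori, even though the final envelope happens to be even in each eigenvalue. The fix is routine and is exactly what the paper's proof does by carrying $|x|,|y|$ throughout: take $\mu=|a|/(\sqrt\lambda-|b|)$, $\theta=|b|/\sqrt\lambda$, $M_1=\diag\bigl(\sgn(a)(\sqrt\lambda-|b|),b\bigr)$, $M_2=\diag(0,b)$, $M_3=\sgn(b)\,\sqrt\lambda\,\ee_2\otimes\ee_2$; the same computation then yields $2\sqrt\lambda(|a|+|b|)-2|a||b|=\bar G_\lambda(\sigma)$ for all signs. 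With that modification your argument is complete and coincides in substance with the paper's.
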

\begin{proof}
From the definition of $\Rs F_\lambda$, we see that for $\xi\in \Rsym$, $R\in SO(2)$, we have 
\[
\Rs F_\lambda (R^T\xi  R)=\Rs F_\lambda (\xi)\,.
\]
Hence it suffices to consider $\xi$ of diagonal form,
\[
\xi=\left(\begin{array}{cc} x&0\\0&y\end{array}\right)\,.
\]
We may assume $|x|+|y|< \lambda$, since otherwise we know
$F_\lambda(\xi)=\bar G_\lambda(\xi)=\lambda+x^2+y^2$. Similarly, we may assume
$0<|x|+|y|$, since otherwise $F_\lambda(\xi)=\bar G_\lambda(\xi)=0$.
Let
$\alpha,\beta\in (0,1)$ to be chosen later, and set
\[
\xi_1=\left(\begin{array}{cc} 0&0\\0&0\end{array}\right)\,,\quad
\xi_2=\left(\begin{array}{cc} x/\alpha &0\\0&0\end{array}\right)\,,\quad
\xi_3=\left(\begin{array}{cc} x&0\\0&y/\beta\end{array}\right)\,.
\]
Note that $\beta \xi_3+(1-\beta)(\alpha \xi_2+(1-\alpha) \xi_1))=\xi$, and
$\xi_3-(\alpha\xi_2+(1-\alpha)\xi_1),\xi_2-\xi_1$ are both symmetric-rank-one.
By Lemma \ref{lem:Rsym}, we have
\[
\begin{split}
  \Rs F_\lambda(\xi)\leq &\beta  F_\lambda(\xi_3)+(1-\beta)\left(\alpha
    F_\lambda(\xi_2)+(1-\alpha) F_\lambda(\xi_1)\right)\\
  =&
  \beta\left(\lambda+x^2+\frac{y^2}{\beta^2}\right)+(1-\beta)\alpha\left(\lambda+\frac{x^2}{\alpha^2}\right)\,.
\end{split}
\]
Now we assume $|x|>0$. The right hand side in the last estimate is convex in $\alpha$; it attains its minimum at $\alpha=
\frac{|x|}{\sqrt{\lambda}}$. Hence, 
\[
\begin{split}
  \Rs F_\lambda(\xi)\leq &\beta
  \left(\lambda+x^2+\frac{y^2}{\beta^2}\right)+(1-\beta)2|x|\sqrt{\lambda}\\
  =& 2|x|\sqrt{\lambda}+\beta(\sqrt{\lambda}-|x|)^2+\frac{y^2}{\beta}
\end{split}
\]
Choosing $\beta=|y|/(\sqrt{\lambda}-|x|)$, we obtain
\[
\Rs  F_\lambda(\xi)\leq 2\sqrt{\lambda}(|x|+|y|-|xy|)=\bar G_\lambda(\xi)\,.
\]
It remains to prove the claim for the case $|x|=0$. Then we have 
\[
\begin{split}
  \Rs  F_\lambda(\xi)\leq &\beta  F_\lambda(\xi_3)+(1-\beta) F_\lambda(\xi_1)\\
  =&
  \beta\left(\lambda+x^2+\frac{y^2}{\beta^2}\right)\,.
\end{split}
\]
Again setting $\beta=|y|/(\sqrt{\lambda}-|x|)$, we obtain the same conclusion as before.
This proves the lemma.
\end{proof}

\begin{proof}[Proof of Theorem \ref{thm:appthm}]
By Theorem 6.28 in \cite{MR2361288}, we have $\bar G_1=Q_1F_1$. We have
$F_\lambda=\lambda F(\cdot/\sqrt{\lambda})$, and hence 
by the definition of
the quasiconvex envelope \eqref{eq:23} it is easily seen that
$Q_1F_\lambda= \lambda Q_1F_1(\cdot/\sqrt{\lambda})$. It is also easily verified that
$\bar G_\lambda=\lambda \bar G(\cdot/\sqrt{\lambda})$, and 
since
$Q_1F_\lambda\leq Q_2 F_\lambda$, we  obtain $\bar G_\lambda\leq
Q_2F_\lambda$. By Lemma \ref{lem:RsymQ2} and Lemma \ref{lem:RsGl}, we also have
the opposite inequality $\bar G_\lambda\geq Q_2F_\lambda$. This proves the theorem.
\end{proof}

\bibliographystyle{plain}
\bibliography{proposal}
\end{document}